% !TEX encoding = UTF-8
% !TEX program = pdflatex
% !TeX spellcheck = en_EN
\documentclass [a4paper,10pt]{amsart}
\usepackage{longtable}
\usepackage{hyperref}
\usepackage[T1]{fontenc}
\usepackage[utf8]{inputenc}
\usepackage[english]{babel}
\usepackage{textcomp}
\usepackage{dsfont}
\usepackage{latexsym}
\usepackage{amssymb}
\usepackage{amsthm}
\usepackage{amsmath}
\DeclareMathAlphabet{\mathpzc}{OT1}{pzc}{m}{en}
\usepackage{yfonts}
\usepackage{xfrac}
\usepackage{newlfont}
\usepackage{graphicx}
\usepackage{mathtools}
\usepackage{comment}
\usepackage{indentfirst}
\usepackage{braket}
\usepackage{mathrsfs}
\usepackage{xcolor}
\usepackage{fixmath}

\usepackage{etoolbox}

\textwidth16.5cm
\textheight21cm
\evensidemargin.2cm
\oddsidemargin.2cm

\addtolength{\headheight}{3.2pt}    %% leave room for symbol in header

\usepackage{scalerel}[2014/03/10]
\usepackage[usestackEOL]{stackengine}
\newcommand{\dashint}{\,\ThisStyle{\ensurestackMath{%
			\stackinset{c}{.2\LMpt}{c}{.5\LMpt}{\SavedStyle-}{\SavedStyle\phantom{\int}}}%
		\setbox0=\hbox{$\SavedStyle\int\,$}\kern-\wd0}\int}

\DeclareMathOperator{\sgn}{sgn}

\DeclareMathOperator{\pr}{pr}

\DeclareMathOperator{\supp}{Supp}

\DeclareMathOperator{\tr}{Tr}

\DeclareMathOperator{\Hol}{Hol}

\DeclareMathOperator{\Int}{Int}

\newcommand{\Pol}{\mathrm{Pol}}
\newcommand{\CR}{\mathrm{CR}}

\renewcommand{\Re}{\mathrm{Re}\,}
\renewcommand{\Im}{\mathrm{Im}\,}
\newcommand{\Supp}[1]{\supp\left( #1\right) }
\newcommand{\Pfaff}{\mathrm{Pf}}
\newcommand{\ee}{\mathrm{e}}

\newcommand{\dd}{\mathrm{d}}

\DeclarePairedDelimiter{\abs}{\lvert}{\rvert}

\DeclarePairedDelimiter{\norm}{\lVert}{\rVert}

\let\originalleft\left
\let\originalright\right
\renewcommand{\left}{\mathopen{}\mathclose\bgroup\originalleft}
\renewcommand{\right}{\aftergroup\egroup\originalright}

\newcommand{\N}{\mathds{N}}

\newcommand{\C}{\mathds{C}}

\newcommand{\R}{\mathds{R}}

\newcommand{\Es}{\mathscr{E}}

\newcommand{\Hs}{\mathscr{H}}

\newcommand{\Dc}{\mathcal{D}}
\newcommand{\Ec}{\mathcal{E}}
\newcommand{\Fc}{\mathcal{F}}

\newcommand{\Hc}{\mathcal{H}}

\newcommand{\Kc}{\mathcal{K}}
\newcommand{\Lc}{\mathcal{L}}
\newcommand{\cM}{\mathcal{M}}
\newcommand{\Nc}{\mathcal{N}}
\newcommand{\Oc}{\mathcal{O}}
\newcommand{\Pc}{\mathcal{P}}

\newcommand{\Rc}{\mathcal{R}}
\newcommand{\Sc}{\mathcal{S}}

\newcommand{\Xc}{\mathcal{X}}

\newcommand{\Zc}{\mathcal{Z}}

\newcommand{\meg}{\leqslant}
\newcommand{\Meg}{\geqslant}
\newcommand{\eps}{\varepsilon}
\renewcommand{\phi}{\varphi}

\newcommand{\Lin}{\mathscr{L}}

\title{Paley--Wiener--Schwartz Theorems on Quadratic CR Manifolds}

\date{}
\begin{document}

\theoremstyle{definition}
\newtheorem{deff}{Definition}[section]

\newtheorem{oss}[deff]{Remark}

\newtheorem{ass}[deff]{Assumptions}

\newtheorem{nott}[deff]{Notation}

\theoremstyle{plain}
\newtheorem{teo}[deff]{Theorem}

\newtheorem{lem}[deff]{Lemma}

\newtheorem{prop}[deff]{Proposition}

\newtheorem{cor}[deff]{Corollary}

\author[M. Calzi]{Mattia Calzi}

\address{Dipartimento di Matematica, Universit\`a degli Studi di
	Milano, Via C. Saldini 50, 20133 Milano, Italy}
\email{{\tt mattia.calzi@unimi.it}}

\keywords{Paley--Wiener--Schwartz Theorems, CR manifolds.}
\thanks{{\em Math Subject Classification 2020:} Primary: 32V20; Secondary: 32A10.}
\thanks{The author is a member of the	Gruppo Nazionale per l'Analisi Matematica, la Probabilit\`a e le	loro	Applicazioni (GNAMPA) of the Istituto Nazionale di Alta Matematica	(INdAM) and is  partially supported by the 2020	GNAMPA grant {\em Fractional Laplacians and subLaplacians on Lie groups and trees}.}

\begin{abstract} 
	Given a quadratic CR manifold $\cM$ embedded in a complex space, we study Paley--Wiener--Schwartz theorems for spaces of Schwartz functions and tempered distributions on $\cM$.
\end{abstract}
\maketitle

\section{Introduction}

Let $K$ be a compact convex subset of $\R^n$, and denote by 
\[
H_{K}\colon \R^n\ni h\mapsto \sup_{\lambda\in -K} \langle \lambda, h\rangle\in [-\infty,\infty)
\] 
the supporting function of $K$.
Then, it is well known (cf., e.g.,~\cite[Theorem 7.3.1]{Hormander}) that for every $u\in \Sc'(\R^n)$ with $\Supp{\Fc u}\subseteq K$, where $\Fc$ denotes the Fourier transform on $\R^n$, there is a unique entire function $f$ on $\C^n$ such that $u$ is the restriction of $f$ to $\R$. In addition, 
\[
f(z)=\frac{1}{(2\pi)^n} \big\langle \Fc u, \ee^{i\langle\,\cdot\,,z\rangle}\big\rangle
\]
for every $z\in \C^n$ and there is a constant $C>0$ such that
\[
\abs{f(z)}\meg C (1+\abs{z})^N \ee^{H_{K}(\Im z)}
\]
for every $z\in \C^n$, where $N$ is the order of $\Fc u$ as a distribution. 
Conversely, given an entire function $f$ satisfying the above estimates, the Fourier transform of the restriction of $f$ to $\R^n$ is a distribution of order at most $N$ and supported in $K$. 
Furthermore, $u\in \Sc(\R^n)$ if and only if for every $N'\in \N$ there is $C_{N'}>0$ such that
\[
\abs{f(z)}\meg \frac{C_{N'}}{(1+\abs{z})^{N'}} \ee^{H_{K}(\Im z)}
\]
for every $z\in \C^n$.

In this paper we shall investigate the analogues of the above Paley--Wiener--Schwartz theorems when the role of $\R^n$ is replaced by a possibly curved submanifold, namely a quadratic (or quadric) CR submanifold, which we shall now describe (cf.~\cite{Boggess,PelosoRicci,PelosoRicci2}).

Let $E$ be a finite-dimensional complex vector space, $F$ a finite-dimensional real vector space, and $\Phi\colon E\times E\to F_\C$ a hermitian mapping, where $F_\C$ denotes the complexification of $F$. Then,
\[
\cM\coloneqq \Set{(\zeta,z)\colon E\times F_\C\colon \Im z- \Phi(\zeta)=0}
\]
is the quadratic CR submanifold associated with $\Phi$.
If the open conve cone
\[
\Lambda_+\coloneqq \Set{\lambda\in F'\colon \forall \zeta\in E\setminus \Set{0} \:\: \langle \lambda, \Phi(\zeta)\rangle>0},
\]
where $\Phi(\zeta)\coloneqq \Phi(\zeta,\zeta)$ for every $\zeta\in E$, is not empty, then $\cM$ may be interpreted as the \v Silov boundary of a Siegel domain of type II. More precisely, given any non-empty open convex cone $\Omega$ not containing affine lines  and such that its dual $\Omega'\coloneqq \Set{\lambda\in F'\colon \forall h\in \overline\Omega\setminus\Set{0}\:\: \langle \lambda,h\rangle>0}$ is contained in $\Lambda_+$, the open convex set
\[
D_\Omega\coloneqq\Set{(\zeta,z)\in E\times F_\C\colon \Im z- \Phi(\zeta)\in \Omega}
\]
is a Siegel domain of type II with \v Silov boundary $\cM$.

Quadratic CR submanifolds may therefore be considered as generalizations of the \v Silov boundaries of Siegel domains of type II, and several of their features have been investigated (cf., e.g.,~\cite{VergneRossi,Treves,PelosoRicci,PelosoRicci2}). 
Notice that  the sets $\cM+i h$, $h\in F$ foliate $E\times F_\C$, so that it is natural to replace $\Im z$, in the classical Paley--Wiener--Schwartz theorems, with 
\[
\rho(\zeta,z)\coloneqq \Im z- \Phi(\zeta).
\]
In addition, $\cM$ has a natural $2$-step nilpotent Lie group structure, so that the Fourier transform on $\cM$ may play a role in this more general situation, up to some extent. Since, unfortunately, the Fourier transform of general tempered distributions is not easily described when $\cM$ is not commutative, we shall rely on the Eucliden Fourier transform $\Fc_F$ on $F$ (which is the centre of $\cM$ when $\Phi$ is non-degenerate) when necessary. 
We shall then characterize the restrictions to $\cM$ of the entire functions $f$ such that there are $N,C>0$ such that
\[
\abs{f(\zeta,z)}\meg C(1+\abs{\zeta}+\abs{z})^N \ee^{H_{K}(\rho(\zeta,z))}
\]
for every $(\zeta,z)\in E\times F_\C$ (cf.~Theorem~\ref{teo:2}). In particular, we shall prove that,  if this estimate hold, then a similar estimate holds (possibily with different $C$ and $N$) with $K$ replaced by $K\cap \Pc$, where $\Pc$ is the polar of $\Phi(E)$. We shall also provide a structure theorem for $\Oc_K(\Nc)$, showing that one may always essentially reduce to the case in which $K\cap \Lambda_+$ has a non-empty interior (cf.~Proposition~\ref{prop:10}).
We may then characterize the $f$ as above whose restriction to $\cM$ is a Schwartz function as the functions $f$ such that for every $N\in \N$ there is $C_N>0$
\[
\abs{f(\zeta,z)}\meg \frac{C_N}{(1+\abs{z})^N} \ee^{H_{K}(\rho(\zeta,z))}
\]
for every $(\zeta,z)\in E\times F_\C$ (cf.~Theorem~\ref{teo:1}). As before, we shall prove that, if these estimates hold, then similar estimates hold with $1+\abs{z}$ replaced by $1+\abs{\zeta}+\abs{z}$, and $K$ replaced by $K\cap \overline{\Lambda_+}$.   In particular, if $\Phi$ is degenerate or $\Phi(E)$ is not contained in any proper closed convex cone (so that $\Lambda_+=\emptyset$), then no such functions arise (except for the zero function).
Even though these results are proved relying as far as possible to the abelian (sometimes even one-dimensional) case by means of suitable slicing procedures, we shall provide interpretations in terms of the intrinsic Fourier transform on $\cM$ whenever possible. 

We then proceed to study further some spaces of Schwartz functions on $\cM$ which are related to the preceding results. 

The paper is organized as follows. In Section~\ref{sec:2}, we recall some basic facts on quadratic CR manifolds, Fourier analysis thereon, and some basic notions of convex analysis.
In Section~\ref{sec:3} we prove a Paley--Wiener--Schwartz theorem for tempered CR distributions. In Section~\ref{sec:4} we prove a Paley--Wiener--Schwartz theorem for CR Schwartz functions.
In Section~\ref{sec:5} we study some auxiliary spaces of CR Schwartz functions. As an application, we derive some structure results for the spaces of tempered CR distributions treated in Section~\ref{sec:3}.

\section{Analysis on Quadratic CR Manifolds}\label{sec:2}

In this section we introduce our basic notation for quadratic CR manifolds, recall some basic facts on CR distributions thereon, develop some basic tools of Fourier analysis, define an auxiliary Rockland operator, and then define our notation of the polar and the supporting function of a convex set.

\subsection{Basic Definitions}

We fix a complex hilbertian space $E$ of dimension $n$, a real hilbertian space $F$ of dimension $m$, and a hermitian map $\Phi\colon E\times E\to F_\C$. 
The associated quadratic CR submanifold is then
\[
\cM\coloneqq \Set{(\zeta,x+i\Phi(\zeta))\colon \zeta\in E, x\in F }=\Set{(\zeta,z)\in E\times F_\C\colon \Im z-\Phi(\zeta)=0},
\]
where $F_\C$ denotes the complexification of $F$, while $\Phi(\zeta)\coloneqq\Phi(\zeta,\zeta)$ for every $\zeta\in E$. We define 
\[
\rho\colon E\times F_\C\ni (\zeta,z)\mapsto \Im z-\Phi(\zeta)\in F.
\]

We endow $E\times F_\C$ with the product
\[
(\zeta,z)\cdot (\zeta',z')\coloneqq (\zeta+\zeta', z+z'+2i \Phi(\zeta',\zeta))
\]
for every $(\zeta,z),(\zeta',z')\in E\times F_\C$, so that $E\times F_\C$ becomes a $2$-step nilpotent Lie group, and $\cM$ a closed subgroup of $E\times F_\C$. In particular, the identity of $E\times F_\C$ is $(0,0)$ and $(\zeta,z)^{-1}= (-\zeta,-z+2 i \Phi(\zeta))$ for every $(\zeta,z)\in E\times F_\C$.
It will be often convenient to identify $\cM$ with the $2$-step nilpotent Lie group $\Nc\coloneqq E\times F$, endowed with the  product
\[
(\zeta,x)(\zeta',x')\coloneqq (\zeta+\zeta', x+x'+2 \Im \Phi(\zeta,\zeta'))
\]
for every $(\zeta,x),(\zeta',x')\in \Nc$, by means of the isomorphism
\[
\pi\colon\Nc \ni (\zeta,x)\mapsto (\zeta, x+i \Phi(\zeta))\in E\times F_\C.
\]
In particular, the identity of $\Nc$ is $(0,0)$ and $(\zeta,x)^{-1}=(-\zeta,-x)$ for every $(\zeta,x)\in \Nc$.
Notice that, in this way, $\Nc$ acts holomorphically (on the left) on $E\times F_\C$.
In particular, if $X$ is a left-invariant differential operator on $\Nc$, then $\Xc\coloneqq \dd \pi(X)$ is a left-invariant differential operator on $E\times F_\C$, and
\[
(\Xc f)_h= X f_h
\]
for every $f\in C^1(E\times F_\C)$ and for every $h\in F$, where 
\[
f_h\colon \Nc\ni (\zeta,x)\mapsto f(\zeta,x+i \Phi(\zeta)+i h)\in \C.
\]
We shall say that $\Xc$ is the canonical extension of $X$ to $E\times F_\C$.

We denote by $\Rc$ the radical of $\Phi$, that is, $\Set{\zeta\in E\colon \Phi(\zeta,\,\cdot\,)=0}$.

\begin{prop}
	The centre of $\Nc$ is $\Rc\times F$, while the centre of $E\times F_\C$ is $\Rc\times F_\C$. The commutator subgroups of $\Nc$ and $E\times F_\C$ coincide with the vector space generated by $\Im \Phi(E\times E)$ (equivalently, by $\Re \Phi(E\times E)$, or by $\Phi(E)$).
\end{prop}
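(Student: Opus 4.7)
The plan is to compute a single commutator in each of the two groups, observe that the result is central (since both groups are 2-step nilpotent), and then deduce the center and the commutator subgroup from this computation.

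First I would compute directly, using the explicit products and inverses given above, that
\[
[(\zeta,x),(\zeta',x')]=(0,4\Im\Phi(\zeta,\zeta'))\quad\text{in }\Nc,
\]
and analogously
\[
[(\zeta,z),(\zeta',z')]=(0,4\Im\Phi(\zeta,\zeta'))\quad\text{in }E\times F_\C,
\]
where the cancellations rely on the identity $\Phi(\zeta',\zeta)-\Phi(\zeta,\zeta')=-2i\Im\Phi(\zeta,\zeta')$ that follows from $\Phi$ being hermitian. The identical form of the two commutators will let me treat both groups simultaneously in what follows.

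For the centers, the fact that both groups are $2$-step nilpotent means that $(\zeta,x)$ (resp.~$(\zeta,z)$) is central if and only if its commutator with every element is $0$, i.e., $\Im\Phi(\zeta,\zeta')=0$ for all $\zeta'\in E$. Applying this with $\zeta'$ replaced by $i\zeta'$ and using the sesquilinearity of $\Phi$ yields also $\Re\Phi(\zeta,\zeta')=0$ for every $\zeta'$, so $\Phi(\zeta,\,\cdot\,)=0$, that is, $\zeta\in\Rc$. The converse is immediate, giving the centers $\Rc\times F$ and $\Rc\times F_\C$ as asserted.

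For the commutator subgroups, note that all commutators lie in the center and in particular in $\{0\}\times F$ (even in the case of $E\times F_\C$, since $\Im\Phi(\zeta,\zeta')\in F$). Because commutators are central, products of commutators translate into sums in $F$, so the commutator subgroup of either group coincides with the $\R$-linear span of $\{(0,\Im\Phi(\zeta,\zeta')):\zeta,\zeta'\in E\}$; the fact that one obtains the full real span (not merely integer combinations) uses that $\Im\Phi(t\zeta,\zeta')=t\,\Im\Phi(\zeta,\zeta')$ for $t\in\R$. It then remains to check the three equivalent descriptions of this span. The inclusions
\[
\Im\Phi(\zeta,\zeta')=\Re\Phi(-i\zeta,\zeta'),\qquad \Re\Phi(\zeta,\zeta')=\Im\Phi(i\zeta,\zeta')
\]
(with signs depending on the sesquilinearity convention, both following from $\Phi(i\zeta,\zeta')=\pm i\Phi(\zeta,\zeta')$) show that the spans of $\Im\Phi(E\times E)$ and $\Re\Phi(E\times E)$ agree, while the polarization identity
\[
2\Re\Phi(\zeta,\zeta')=\Phi(\zeta+\zeta')-\Phi(\zeta)-\Phi(\zeta')
\]
together with $\Phi(\zeta)=\Re\Phi(\zeta,\zeta)$ shows that the span of $\Re\Phi(E\times E)$ equals the span of $\Phi(E)$. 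No step is particularly delicate; the only mildly subtle point is the argument that being commutator-trivial forces $\zeta\in\Rc$ (rather than only killing the imaginary part of $\Phi(\zeta,\,\cdot\,)$), which is where the complex structure of $E$ intervenes.
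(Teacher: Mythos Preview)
Your proof is correct and follows exactly the paper's approach: the paper simply records the two commutator formulas $[(\zeta,x),(\zeta',x')]_\Nc=(0,4\Im\Phi(\zeta,\zeta'))$ and $[(\zeta,z),(\zeta',z')]_{E\times F_\C}=(0,4\Im\Phi(\zeta,\zeta'))$ and leaves the remaining deductions to the reader, whereas you have spelled those deductions out (including the use of the complex structure to pass from $\Im\Phi(\zeta,\cdot)=0$ to $\zeta\in\Rc$, and the equivalence of the three spans).
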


\begin{proof}
	It suffices to observe that
	\[
	[(\zeta,x),(\zeta',x')]_\Nc=(0,4\Im \Phi(\zeta,\zeta')) 
	\]
	for every $(\zeta,x),(\zeta',x')\in \Nc$, while
	\[
	[(\zeta,z),(\zeta',z')]_{E\times F_\C}=(0, 4 \Im\Phi(\zeta,\zeta'))
	\]
	for every $(\zeta,z),(\zeta',z')\in E\times F_\C$.
\end{proof}

\subsection{CR Distributions}

For every $v\in E$, we denote by $Z_v$ the left-invariant vector field on $\Nc$ which induces $\partial_{E,v}\coloneqq \frac 1 2(\partial_v-i\partial_{iv})$ at $(0,0)$.\footnote{We use this notation to stress the dependence of the Wirtinger derivatives  $\partial_{E,v}$ on the complex structure of the space $E$, since we shall consider different complex structures on (some subspaces of) $E$.}
Then,
\[
Z_v= \partial_{E,v}+i \Phi(v,\,\cdot\,) \partial_F
\]
for every $v\in E$. In other words, 
\[
(Z_v f)(\zeta,x)= (\partial_{E,v} f)(\zeta,x)+i(\partial_{\Re \Phi(\zeta,v)} f)(\zeta,x)+ (\partial_{\Im \Phi(\zeta,v)} f)(\zeta,x)
\]
for every $f\in C^1(\Nc)$ and for every $(\zeta,x)\in \Nc$.

Notice that, denoting by $\Zc_v$ the canonical extension of $Z_v$ to $E\times F_\C$, one has
\[
\overline{\Zc_v } f(\zeta,z)=\overline{\partial_{E,v} } f(\zeta,z)
\]
for every $f\in C^1(E\times F_\C)$ and for every $(\zeta,z)\in E\times F_\C$ \emph{such that $f(\zeta,\,\cdot\,)$ is holomorphic is a neighbourhood of $z$}.

Observe that the $Z_v$, $v\in E$, generate a left-invariant involutive subbundle of the tangent bundle of $\Nc$, which coincides with the CR structure induced by $E\times F_\C$ through the (real analytic) embedding $(\zeta,x)\mapsto (\zeta,x+i\Phi(\zeta))$ (cf.~\cite[Section 7.4]{Boggess}). We shall also say that $\Nc$, endowed with this CR structure, is a quadratic CR manifold (cf.~\cite{PelosoRicci,PelosoRicci2}).

A distribution $u$ on $\Nc$ is then said to be a CR distribution if $\overline{Z_v}u=0$ for every $v\in E$ (cf.~\cite[Sections 9.1 and 17.2]{Boggess}).
As observed in~\cite[Section 17.2]{Boggess}, if $u$ is a CR distribution on $\Nc$, then there is a unique $v\in C^\infty(E; \Dc'(F))$, where $\Dc'(F)$ denotes the space of distributions on $F$, such that 
\[
\langle u, \phi\otimes \psi \rangle = \int_E \phi(\zeta) \langle v(\zeta), \psi\rangle\,\dd \zeta
\]
for every $\phi \in C^\infty_c(E)$ and for every $\psi\in C^\infty_c(F)$. 

In order to put our results in a better perspective, we now report an extension theorem for CR functions on $\Nc$, which follows from~\cite[Theorem 1 of Section 14.2 and Theorem 1 of Section 15.3]{Boggess}.
This result can be extended to CR distributions, with distributional convergence to the boundary values. The convergence to the boundary values may be improved for several classes of distributions.

\begin{prop}
	Assume that the convex envelope of $\Phi(E)$ has a non-empty interior $\Omega$, and define $D\coloneqq \rho^{-1}(\Omega)$. Then, for every CR function $u\in C^1(\Nc)$ there is a unique $f\in \Hol(D)$ such that $f_{h}$ converges locally uniformly to $u$ for $h\to 0$, $h\in \Gamma$, for every  convex cone $\Gamma$ contained in $\Omega$ and not containing affine lines (and for $\Gamma=F$ if $\Omega=F$).\footnote{Note that~\cite[Theorem 1 of Section 15.3]{Boggess} actually states that $f_h\to u$ locally uniformly for $h\to 0$, $h\in \Omega$ (equivalently, that $f$ extends by continuity to $D\cup \rho^{-1}(0)$), but the proof only shows convergence for $h\to 0$, $h\in \Gamma$, where $\Gamma$ is a closed convex cone contained in $\Omega \cup\Set{0}$. A simple adaptation of the techniques presented in~\cite[Chatper 15]{Boggess} leads to the present formulation.}
\end{prop}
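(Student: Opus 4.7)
The plan is to deduce the statement from the two cited results in Boggess's book, combined with the homogeneity of $\cM$ and an analysis of the behaviour of analytic discs near the boundary. First I would observe that the Levi form of $\cM$ at the origin is $\Phi$ itself (through the identifications of $T_{(0,0)}\cM/(T^{1,0}\cM\oplus T^{0,1}\cM)$ with $F$), so that the convex hull of the image of the Levi form is exactly the convex hull of $\Phi(E)$, which by assumption has non-empty interior $\Omega$. This places $\cM$ in the setting where the local CR extension theorem of~\cite[Theorem 1 of Section 14.2]{Boggess} applies: near $(0,0)$, the CR function $u$ extends to a holomorphic function on a wedge of the form $\rho^{-1}(\Omega_0)\cap U$ for some open cone $\Omega_0\subseteq \Omega$ with $\overline{\Omega_0}\setminus\{0\}\subseteq \Omega$ and some neighbourhood $U$ of $(0,0)$.

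Next I would globalize. Because $\Nc$ acts holomorphically on $E\times F_\C$ and preserves both $\cM$ and $D=\rho^{-1}(\Omega)$, the local extension at $(0,0)$ gives, by left translation, a local holomorphic extension of $u$ near every point of $\cM$. A standard patching argument using the identity principle on the connected open set $D$ produces a globally defined $f\in \Hol(D)$ extending $u$; the same argument yields uniqueness. At this stage~\cite[Theorem 1 of Section 15.3]{Boggess} enters to promote the local wedge-like convergence to the statement that $f_h\to u$ locally uniformly as $h\to 0$ along any closed convex cone contained in $\Omega\cup\{0\}$.

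The residual issue, flagged in the footnote, is that the proposition requires convergence along every convex cone $\Gamma\subseteq\Omega$ not containing affine lines, whereas the formulation in Boggess yields convergence only for cones with $\overline{\Gamma}\setminus\{0\}\subseteq \Omega$. To close this gap I would argue as follows: any $\Gamma\subseteq \Omega$ convex and not containing affine lines may be written as the union of an increasing family of relatively compact open subcones $\Gamma_n\Subset \Omega$ with the same property, so that convergence on each compact subset of $\Nc$ as $h\to 0$ in $\Gamma$ is reduced to the previous case. In the case $\Omega=F$ (equivalently, $0$ is interior to the convex hull of $\Phi(E)$), one argues directly that $f$ extends continuously across $\cM$ by the global wedge construction.

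The main obstacle is precisely the adaptation described above, namely extracting from the analytic-disc machinery of~\cite[Chapter 15]{Boggess} the fact that the boundary values are attained locally uniformly as $h\to 0$ along arbitrary cones $\Gamma$ not containing affine lines. The key ingredient is a uniform estimate on the size and position of the family of attached analytic discs used in the extension, which allows one to trade the global closed-cone hypothesis of Boggess for an exhaustion by relatively compact subcones; once this technical point is settled, the remaining parts of the proof (local extension, propagation by the $\Nc$-action, uniqueness via the identity principle) are routine.
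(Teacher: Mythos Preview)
The paper gives no proof of this proposition: it is stated as a consequence of~\cite[Theorem~1 of Section~14.2 and Theorem~1 of Section~15.3]{Boggess}, with the footnote remarking that the refinement on the cone $\Gamma$ requires ``a simple adaptation of the techniques presented in~\cite[Chapter~15]{Boggess}.'' Your sketch is therefore not competing with a proof in the paper but rather filling in what the paper leaves to the reader, and your overall strategy---local extension via the Levi form, globalization by the transitive $\Nc$-action, then the boundary-value analysis from Chapter~15---is exactly the route the citations indicate.

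One point deserves sharpening. In your third paragraph you write that convergence along an arbitrary cone $\Gamma\subseteq\Omega$ not containing affine lines ``is reduced to the previous case'' by exhausting $\Gamma$ with subcones $\Gamma_n$ whose closures (minus the origin) lie in $\Omega$. Taken at face value this reduction fails: knowing that for each $n$ there exists $\delta_n>0$ with $\sup_K\abs{f_h-u}<\eps$ for $h\in\Gamma_n$, $\abs{h}<\delta_n$, does not yield a single $\delta$ valid on all of $\Gamma$, because a small $h\in\Gamma$ may lie only in $\Gamma_n$ for large $n$. You seem to recognize this in your final paragraph, where you correctly locate the real work in obtaining \emph{uniform} (in the approach direction) control on the attached analytic discs from Chapter~15. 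That uniformity is indeed the substance of the ``simple adaptation'' the footnote alludes to, and once you have it the exhaustion is unnecessary---the uniform disc estimates give the convergence along $\Gamma$ directly. I would reorganize so that the exhaustion is not presented as an argument in its own right; as written, paragraphs three and four are in tension.

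A minor remark: writing $\Gamma_n\Subset\Omega$ for cones is ambiguous (no nontrivial cone is relatively compact); make explicit that you mean $\overline{\Gamma_n}\setminus\{0\}\subseteq\Omega$, and note that the existence of such an exhaustion uses that $\Gamma$ contains no affine line.
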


In particular, if $\Omega=F$, then every CR function (of class $C^1$) on $\Nc$ extends to an entire function on $E\times F_\C$. Nontheless, since we are going to consider tempered distributions on $\Nc$, this case  will be of little interest, since then the only CR tempered distributions on $\Nc$ are the CR polynomials, as a consequence of Theorem~\ref{teo:2} and Proposition~\ref{prop:6}.

\subsection{The Fourier Transform on $\Nc$}\label{sec:2:1}

We shall now describe (a representative of) all irreducible continuous unitary representations of $\Nc$ and present the associated Plancherel formula.  Cf.~also~\cite{PelosoRicci,CalziPeloso}.

Take $\lambda\in F'$, and denote by $\Rc_\lambda$ the radical of the hermitian form $\langle \lambda_\C, \Phi\rangle$. Then, there is a unique $J_\lambda\in GL( \Rc_\lambda^\perp)$ such that
\[
\langle \lambda_\C, \Phi(\zeta,i \zeta')\rangle= \langle \zeta\vert J_\lambda \zeta'\rangle
\]
for every $\zeta,\zeta'\in \Rc_\lambda^\perp$. We shall then define $J'_\lambda\coloneqq \abs{J_\lambda}^{-1} J_\lambda$, and denote by $E_\lambda$ the space $\Rc_\lambda^\perp$ endowed complex scalar product defined by
\[
(x+i y)\cdot_\lambda \zeta \coloneqq x \zeta+ i y J'_\lambda \zeta
\]
for every $x,y\in \R$ and for every $\zeta\in \Rc_\lambda^\perp$. We shall keep on $\Rc_\lambda^\perp$ the complex scalar product induced by $E$.

Then,\footnote{Here we depart slightly from the notation employed in~\cite{CalziPeloso}.} 
\[
\Phi_\lambda\colon (\zeta,\zeta')\mapsto \langle \lambda, \Im \Phi(J'_\lambda \zeta,\zeta')\rangle+i \langle \lambda, \Im \Phi( \zeta,\zeta')\rangle  
\]
is a positive hermitian form on $E_\lambda$. Observe that, since $J'_\lambda\in GL(\Rc_\lambda^\perp)$, $J'^2_\lambda=-I$, and $J'_\lambda$ is clearly skew-adjoint, $\Rc_\lambda^\perp$ is the direct sum of two orthogonal subspaces $E_{\lambda,+}$ and $E_{\lambda,-}$ such that $J'_\lambda=\pm i$ on $E_{\lambda, \pm}$.  In particular, $E_{\lambda,+}$ and $E_{\lambda,-}$ are also (complex) vector subspaces of $E_\lambda$.

Observe that $\Nc/\ker \lambda$ is isomorphic to the product of the $(2 (n-d_\lambda)+1)$-dimensional Heisenberg group  and  the abelian group $\Rc_\lambda$, where $d_\lambda\coloneqq \dim \Rc_\lambda$. We denote by $(\Rc_\lambda)'_\R$ the dual of the \emph{real} vector space subjacent to $\Rc_\lambda$. Therefore, the Stone--von Neumann theorem (cf., e.g.,~\cite[Theorem 1.50]{Folland}) implies that for every $\tau\in (\Rc_\lambda)'_\R$ there is (up to unitary equvalence) a unique irreducible continuous unitary representation $\pi_{\lambda,\tau}$ of $\Nc$ such that 
\[
\pi_{\lambda,\tau}(\zeta,x)= \ee^{-i \langle \lambda, x\rangle- i \langle \tau, \zeta \rangle} I
\]
for every $(\zeta,x)\in \Rc_\lambda \times F$. 

We may realize $\pi_{\lambda,\tau}$ as follows. Define $\Hs_\lambda \coloneqq \Hol (E_\lambda) \cap L^2(\nu_\lambda)$, where $\nu_\lambda \coloneqq \ee^{-2 \Phi_\lambda}\cdot \Hc^{2(n-d_\lambda)}$, where $\Hc^{2(n-d_\lambda)}$ denotes the (suitably normalized) $2(n-d_\lambda)$-dimensional Hausdorff measure on $E_\lambda$, and set
\[
\pi_{\lambda,\tau}(\zeta+\zeta',x)\psi(\omega)\coloneqq \ee^{-i \langle \lambda, x\rangle- i \langle \tau, \zeta' \rangle+ 2 \Phi_\lambda(\omega,\zeta)-\Phi_\lambda(\zeta)} \psi(\omega-\zeta)
\]
for every $\psi\in \Hs_\lambda$, for every $\omega\in E_\lambda$, for every $\zeta\in E_\lambda$, for every $\zeta'\in \Rc_\lambda$, and for every $x\in F$.
We shall also set
\[
\pi_{\lambda}\coloneqq \pi_{\lambda,0}
\]
for every $\lambda\in F'$.

Given a complex space $H$, we define $\partial_{H,v}\coloneqq \frac 1 2 (\partial_v-i \partial_{i v})$ for every $v\in H$.

\begin{prop}\label{prop:1}
	Take $\lambda\in F'$ and $\tau\in (\Rc_\lambda)'_\R$. Then, the following hold:
	\begin{itemize}
		\item[\textnormal{(1)}] $\pi_{\lambda,\tau}$ is an irreducible continuous unitary representation of $\Nc$ in $\Hs_\lambda$;
		
		\item[\textnormal{(2)}]  the set of polynomials on $E_\lambda$ is contained  in the space of smooth vectors $C^\infty(\pi_{\lambda,\tau})$ for $\pi_{\lambda,\tau}$, and is dense in $\Hs_\lambda$;\footnote{It is actually contained and dense in $C^\infty(\pi_{\lambda,\tau})$, but we shall not need this fact.}
		
		\item[\textnormal{(3)}]  $e_{\lambda,0}\coloneqq \sqrt{\frac{2^{n-d_\lambda} \abs{\Pfaff(\lambda)}}{\pi^{n-d_\lambda}}}\chi_{E_\lambda}$ is a unit vector in $\Hs_\lambda$, and 
		\[
		\langle \pi_{\lambda,\tau}(\zeta+\zeta',x)e_{\lambda,0}\vert e_{\lambda,0}\rangle = \ee^{-i\langle \lambda, x\rangle - i \langle \tau, \zeta'\rangle - \Phi_\lambda(\zeta)}
		\]
		for every $\zeta\in E_\lambda$, for every $\zeta'\in \Rc_\lambda$, and for every $x\in F$;
		
		\item[\textnormal{(4)}]  for all $v \in E_\lambda$, $\psi \in C^\infty(\pi_{\lambda,\tau})$, and $\omega\in E_\lambda$,
		\[
		\dd \pi_{\lambda,\tau}(\partial_{E_\lambda, v}) \psi(\omega)= - \partial_v \psi(\omega) \qquad \text{and} \qquad \dd \pi_{\lambda,\tau}(\overline{\partial_{E_\lambda, v}}) \psi(\omega)= 2 \Phi_\lambda(\omega, v)\psi(\omega).
		\]
	\end{itemize}
\end{prop}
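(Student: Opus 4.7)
The approach is to exploit the Bargmann--Fock realisation: $\Hs_\lambda$ is a weighted space of entire functions on $E_\lambda$ and $\pi_{\lambda,\tau}$ acts essentially by translations twisted by Gaussian multipliers, so every assertion reduces either to a direct Gaussian computation or to the Stone--von Neumann theorem already invoked in the text preceding the statement.

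I would check (1) in two steps. Multiplicativity $\pi_{\lambda,\tau}(g_1 g_2) = \pi_{\lambda,\tau}(g_1)\pi_{\lambda,\tau}(g_2)$ is verified by unwinding the group law of $\Nc$ and using the identity $\langle \lambda, \Im\Phi(\cdot,\cdot)\rangle = \Im \Phi_\lambda$ on $E_\lambda\times E_\lambda$, which is built into the definitions of $J'_\lambda$ and $\Phi_\lambda$. Unitarity is checked on the dense subspace of polynomials: after the change of variable $\omega \mapsto \omega + \zeta$, the identity $\Phi_\lambda(\omega + \zeta) = \Phi_\lambda(\omega) + 2\Re \Phi_\lambda(\omega,\zeta) + \Phi_\lambda(\zeta)$ cancels exactly the real part of the multiplier together with the $-\Phi_\lambda(\zeta)$ prefactor. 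Irreducibility then requires no further computation: since $\pi_{\lambda,\tau}(\zeta',x) = \ee^{-i\langle \lambda,x\rangle - i\langle \tau,\zeta'\rangle}I$ for every $(\zeta',x)\in \Rc_\lambda\times F$, $\pi_{\lambda,\tau}$ descends to $\Nc/(\Rc_\lambda \times \ker \lambda)$, which splits as a $(2(n-d_\lambda)+1)$-dimensional Heisenberg group times an abelian factor, with central character matching the one prescribed, so that Stone--von Neumann yields both irreducibility and the stated uniqueness.

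For (2) and (3), density of polynomials in $\Hs_\lambda$ is the classical basis statement for Bargmann--Fock: after diagonalising $\Phi_\lambda$, suitably normalised monomials form an orthonormal basis. Smoothness of the orbit map $(\zeta,x) \mapsto \pi_{\lambda,\tau}(\zeta,x)P$ for $P$ a polynomial is immediate from the explicit formula, since all its derivatives in $(\zeta,x)$ are again polynomials in $\omega$ times the same Gaussian multiplier, hence lie in $\Hs_\lambda$ with locally uniform norms. The unit norm of $e_{\lambda,0}$ follows from the hermitian Gaussian integral
\[
\int_{E_\lambda} \ee^{-2\Phi_\lambda(\omega)}\,\dd \Hc^{2(n-d_\lambda)}(\omega) = \frac{\pi^{n-d_\lambda}}{2^{n-d_\lambda}\abs{\Pfaff(\lambda)}},
\]
obtained by diagonalising $J'_\lambda$, and the matrix coefficient reduces to a constant multiple of
\[
\ee^{-i\langle\lambda,x\rangle - i\langle\tau,\zeta'\rangle - \Phi_\lambda(\zeta)}\int_{E_\lambda} \ee^{2\Phi_\lambda(\omega,\zeta) - 2\Phi_\lambda(\omega)}\,\dd \Hc^{2(n-d_\lambda)}(\omega),
\]
whose remaining integral equals the normalising integral above after completing the square (using that $\Phi_\lambda(\omega,\zeta)$ is $E_\lambda$-antiholomorphic in $\omega$), yielding the stated identity.

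For (4), differentiating the explicit formula for $\pi_{\lambda,\tau}(tv,0)\psi(\omega)$ at $t=0$ for real $v\in E_\lambda$ gives $\dd\pi_{\lambda,\tau}(v)\psi(\omega) = 2\Phi_\lambda(\omega,v)\psi(\omega) - \partial_v\psi(\omega)$. Decomposing $v$ along the $(1,0)/(0,1)$ splitting of the $E_\lambda$-structure and observing that $v \mapsto 2\Phi_\lambda(\omega,v)$ is $\C$-antilinear in the $E_\lambda$-sense isolates the $\overline{\partial_{E_\lambda,v}}$ contribution, while holomorphy of $\psi$ in the $E_\lambda$-sense collapses $-\partial_v\psi$ to $-\partial_{E_\lambda,v}\psi$. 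The main obstacle throughout is the bookkeeping between the two complex structures on $\Rc_\lambda^\perp$ (the ambient one from $E$ and the twisted one on $E_\lambda$), together with the sign conventions for $\Pfaff(\lambda)$ and the linearity conventions for $\Phi_\lambda$: fixing these cleanly at the outset is what ensures that all Gaussian integrals factorise with the correct constants and that the $(1,0)/(0,1)$ splitting in (4) is understood in the $E_\lambda$-sense rather than the ambient one.
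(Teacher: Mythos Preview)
Your proposal is correct and follows the standard Bargmann--Fock computations that underlie the references the paper cites (\cite[Proposition 1.15]{CalziPeloso} for (1)--(3), \cite[Theorem 1.63]{Folland} for (2)), while the paper itself simply defers to those references and calls (4) ``a simple computation.'' One small slip: in your matrix-coefficient computation you say $\Phi_\lambda(\omega,\zeta)$ is $E_\lambda$-antiholomorphic in $\omega$, but in fact it is $E_\lambda$-holomorphic in $\omega$ (as it must be, since by (4) multiplication by $2\Phi_\lambda(\,\cdot\,,v)$ preserves $\Hs_\lambda$); the Gaussian integral still evaluates as you claim, either by the reproducing-kernel property of $\Hs_\lambda$ or by a real completion of the square.
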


\begin{proof}
	(1)--(3) This is proved as in~\cite[Proposition 1.15]{CalziPeloso}.
	
	(2) This follows from~\cite[Theorem 1.63]{Folland}.
	
	(4) This is a simple computation.
\end{proof}

Conversely, if $\pi$ is an irreducible continuous unitary representation of $\Nc$, then there is $\lambda\in F'$ such that $\pi(0,x)=\ee^{-i\langle \lambda, x \rangle} I$ for every $x\in F$, so that $\pi$ induces a representation of $\Nc/\ker \lambda$, and is therefore unitarily equivalent to a representation of the form $\pi_{\lambda,\tau}$ for some $\tau\in (\Rc_\lambda)'_\R$.

We define 
\[
d\coloneqq \min_{\lambda\in F'} d_\lambda=\min_{\lambda\in F'}\dim \Rc_\lambda
\]
and
\[
W\coloneqq \Set{\lambda\in F'\colon d_\lambda>d}.
\]

We denote by $\Lin(\Hs_\lambda)$ the space of (continuous) endomorphisms of $\Hs_\lambda$, and by $\Lin^2(\Hs_\lambda)$ the space of Hilbert--Schmidt endomorphisms of $\Hs_\lambda$, for every $\lambda\in F'$.

\begin{prop}\label{prop:2}
	The mapping
	\[
	L^1(\Nc) \ni f \mapsto (\pi_{\lambda,\tau}(f))\in \prod_{\lambda\in F'}\prod_{\tau\in (\Rc_\lambda)'_\R} \Lin(\Hs_\lambda)
	\]
	induces an isomorphism
	\[
	L^2(\Nc)\to \frac{2^{n-m-3d}}{\pi^{n+m+d}} \int_{F'}^\oplus \int_{(\Rc_\lambda)'_\R}^\oplus \Lin^2(\Hs_\lambda)\,\dd \tau \abs{\Pfaff(\lambda)}\,\dd \lambda.
	\]
	In particular,
	\[
	\norm{f}_{L^2(\Nc)}^2=\frac{2^{n-m-3d}}{\pi^{n+m+d}} \int_{F'} \int_{(\Rc_\lambda)'_\R} \norm{\pi_{\lambda,\tau}(f)}^2_{\Lin(\Hs_\lambda)}\,\dd \tau \abs{\Pfaff(\lambda)}\,\dd \lambda
	\]
	for every $f\in L^2(\Nc)$.
\end{prop}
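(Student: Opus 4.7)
The plan is to derive the Plancherel formula by a two-step partial Fourier transform: first along the centre $F$, then along $\Rc_\lambda$ inside the fibre, after which the problem reduces to the known Plancherel formula for the Fock model of the $(2(n-d_\lambda)+1)$-dimensional Heisenberg group. I would begin by remarking that, since the constant $d$ equals $d_\lambda$ off a negligible subset of $F'$ (the complement of $W$), and since the set $W$ has positive codimension in $F'$, we may restrict attention to $\lambda\notin W$. By density it suffices to verify the identity for $f\in \Sc(\Nc)$.

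First I would use the fact that $\pi_{\lambda,\tau}(0,x)=\ee^{-i\langle\lambda,x\rangle}I$ to factor out the integration in $x\in F$. Writing $\widehat f^F(\zeta,\lambda)\coloneqq \int_F f(\zeta,x)\ee^{-i\langle\lambda,x\rangle}\,\dd x$, a Fubini computation based on the explicit formula in the definition of $\pi_{\lambda,\tau}$ gives
\[
\pi_{\lambda,\tau}(f)=\int_E \widehat f^F(\zeta,\lambda)\,\pi_{\lambda,\tau}(\zeta,0)\,\dd\zeta.
\]
The classical Plancherel identity on $F$ yields $\norm{f}_{L^2(\Nc)}^2=(2\pi)^{-m}\int_{F'}\norm{\widehat f^F(\,\cdot\,,\lambda)}_{L^2(E)}^2\,\dd\lambda$. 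Next, fix $\lambda\notin W$, decompose $\zeta=\zeta_0+\zeta_1\in \Rc_\lambda\oplus E_\lambda$, and take a further partial Fourier transform in $\zeta_0\in \Rc_\lambda$; since $\pi_{\lambda,\tau}(\zeta_0,0)=\ee^{-i\langle\tau,\zeta_0\rangle}I$ for $\zeta_0\in \Rc_\lambda$, the operator $\pi_{\lambda,\tau}(f)$ depends only on
\[
g_\lambda(\zeta_1,\tau)\coloneqq \int_{\Rc_\lambda}\widehat f^F(\zeta_0+\zeta_1,\lambda)\,\ee^{-i\langle\tau,\zeta_0\rangle}\,\dd\zeta_0,
\]
and another Plancherel identity gives $\norm{\widehat f^F(\,\cdot\,,\lambda)}_{L^2(E)}^2=(2\pi)^{-d}\int_{(\Rc_\lambda)'_\R}\norm{g_\lambda(\,\cdot\,,\tau)}_{L^2(E_\lambda)}^2\,\dd\tau$.

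It now remains to relate $\norm{g_\lambda(\,\cdot\,,\tau)}_{L^2(E_\lambda)}^2$ to $\norm{\pi_{\lambda,\tau}(f)}_{\Lin^2(\Hs_\lambda)}^2$. The quotient of $\Nc$ by $\ker\lambda$ is isomorphic to the product of a Heisenberg group of dimension $2(n-d)+1$ and an abelian factor, and $\pi_{\lambda,\tau}$ is (unitarily equivalent to) the Fock realization on $\Hs_\lambda$ of the induced Heisenberg representation with symbol $g_\lambda(\,\cdot\,,\tau)$; the standard Plancherel/Fock‑Bargmann Plancherel identity for this representation (which can be proved by testing against the coherent states $\pi_{\lambda,\tau}(\zeta,0)e_{\lambda,0}$ from Proposition~\ref{prop:1}(3) and using Proposition~\ref{prop:1}(4)) then provides the relation
\[
\norm{\pi_{\lambda,\tau}(f)}_{\Lin^2(\Hs_\lambda)}^2= c_{n-d}\,\abs{\Pfaff(\lambda)}^{-1}\,\norm{g_\lambda(\,\cdot\,,\tau)}_{L^2(E_\lambda)}^2
\]
for an explicit constant $c_{n-d}$ coming from the Bargmann kernel and the normalization of $\Hc^{2(n-d)}$ on $E_\lambda$. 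Substituting this into the preceding identities produces the claimed formula, with the global factor $\frac{2^{n-m-3d}}{\pi^{n+m+d}}$ arising from combining $(2\pi)^{-m}$, $(2\pi)^{-d}$, the Fock constant $c_{n-d}$, and the weight $\abs{\Pfaff(\lambda)}$.

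The main obstacle is purely bookkeeping: tracking precisely how $\abs{\Pfaff(\lambda)}$ is defined from the hermitian form $\Phi_\lambda$ and $J_\lambda$, how the measure $\Hc^{2(n-d_\lambda)}$ on $E_\lambda$ is normalized, and how the Fock Plancherel constant depends on $n-d$, so as to match the factor $\frac{2^{n-m-3d}}{\pi^{n+m+d}}$ exactly. Since the identical calculation appears in the companion paper of the author with Peloso referenced in the proof of Proposition~\ref{prop:1}, I would in practice simply invoke that computation; but the outline above shows how one would prove Proposition~\ref{prop:2} from scratch.
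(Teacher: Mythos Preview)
Your outline is correct in substance, but the paper does not reproduce any of it: the proof there consists of two citations, one to \cite{AstengoCowlingDiBlasioSundari} for the Plancherel identity and one to the general Plancherel theory for nilpotent Lie groups (\cite{Dixmier} or \cite{CorwinGreenleaf}) for the isomorphism statement. What you have written is essentially an unpacking of the computation behind the first citation, so the two approaches are not really in competition; yours is a ``from scratch'' justification of what the paper takes as known. One small slip worth flagging in your bookkeeping: $\Rc_\lambda$ is a complex subspace of $E$ of complex dimension $d_\lambda=d$, hence of real dimension $2d$, so the Euclidean Plancherel factor on $\Rc_\lambda$ is $(2\pi)^{-2d}$, not $(2\pi)^{-d}$; this is exactly the kind of normalization issue you already warned yourself about, and it is needed to land on the stated constant $\tfrac{2^{n-m-3d}}{\pi^{n+m+d}}$. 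Also, the reference the paper actually invokes for the explicit constant is \cite{AstengoCowlingDiBlasioSundari}, not \cite{CalziPeloso}.
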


\begin{proof}
	The last equality follows from~\cite[Section 2]{AstengoCowlingDiBlasioSundari}. The first assertion then follows from the general theory (cf, e.g.,~\cite[Theorem 18.8.2]{Dixmier} or~\cite[Section 4.3]{CorwinGreenleaf}).
\end{proof}

We define 
\[
\Lambda_+\coloneqq \Set{\lambda\in F'\colon \forall \zeta\in E\setminus \Set{0}\:\: \langle \lambda , \Phi(\zeta) \rangle>0}
\] 
and
\[
\Pc=\Phi(E)^\circ= \Set{\lambda\in F'\colon  \langle \lambda , \Phi \rangle\Meg 0}= \Set{\lambda\in F'\colon E_\lambda=E_{\lambda,+}}.
\]

\begin{prop}\label{prop:7}
	The set $\Pc$ is a closed convex cone, with interior
	\[
	\Set{\lambda\in F'\colon \forall \zeta\in E\setminus \Rc \:\: \langle \lambda, \Phi(\zeta)\rangle >0},
	\]
	which coincides with $\Lambda_+$ if and only if $\Phi$ is non-denegerate. If, otherwise, $\Phi$ is degenerate, then $\Lambda_+=\emptyset$.	
	
	If $\Pc$ has a non-emtpy interior, then  $\Phi$ is proper on $\Rc^\perp$ and the convex envelope of $\Phi(E)$ is closed. 
\end{prop}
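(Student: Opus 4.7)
The plan is to handle the four assertions in order. First, $\Pc$ equals the intersection $\bigcap_{\zeta\in E}\{\lambda\in F' : \langle \lambda, \Phi(\zeta)\rangle\Meg 0\}$ of closed half-spaces through the origin, so it is automatically a closed convex cone.

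For the interior, the key preliminary observation is that, for $\lambda\in \Pc$, the hermitian form $\langle \lambda_\C,\Phi\rangle$ is positive semi-definite, so Cauchy--Schwarz yields $\{\zeta:\langle \lambda,\Phi(\zeta)\rangle=0\}=\Rc_\lambda$; since $\Rc\subseteq \Rc_\lambda$ always, the set on the right-hand side of the statement equals $\{\lambda\in \Pc : \Rc_\lambda=\Rc\}$. The forward inclusion is then easy: if $\Rc_{\lambda_0}=\Rc$ then $\langle \lambda_0,\Phi\rangle$ is positive definite on the finite-dimensional subspace $\Rc^\perp$, and positive definiteness persists under small perturbations of $\lambda_0$; together with the identity $\Phi(\zeta_1+\zeta_2)=\Phi(\zeta_2)$ for $\zeta_1\in\Rc$ and $\zeta_2\in\Rc^\perp$, this gives a neighbourhood of $\lambda_0$ inside $\Pc$.

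The reverse inclusion is where the real work is. Suppose $\lambda_0\in\Int\Pc$ and $\zeta_0\in\Rc_{\lambda_0}\setminus\Rc$. The complex subspaces $N\coloneqq \{\zeta_1:\Phi(\zeta_0,\zeta_1)=0\}$ and $\Rc_{\lambda_0}$ are both proper in $E$: indeed, $\Rc_{\lambda_0}=E$ would place $\lambda_0$ in $\Phi(E)^\perp$, and then the sign-reversal $\pm\mu$ available inside any ball $B(\lambda_0,\eps)\subseteq\Pc$ would force $\Phi\equiv 0$ and thus $\Rc=E$, contradicting $\zeta_0\notin\Rc$. Since a complex vector space is not the union of two proper subspaces, I can pick $\zeta_1\notin N\cup\Rc_{\lambda_0}$; after possibly replacing $\zeta_1$ by $i\zeta_1$ (both $N$ and $\Rc_{\lambda_0}$ are stable under this), the vector $v\coloneqq \Re\Phi(\zeta_0,\zeta_1)$ is nonzero. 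Choose $\mu\in F'$ with $\langle\mu,v\rangle\neq 0$. Using $\langle \lambda_0,\Phi(\zeta_0)\rangle=0$ and $\langle \lambda_0,v\rangle=\Re\langle(\lambda_0)_\C,\Phi(\zeta_0,\zeta_1)\rangle=0$, expansion yields, for $s,t\in\R$ and $\sigma\in\{\pm 1\}$,
\[
\langle \lambda_0+\sigma s\mu,\Phi(\zeta_0+t\zeta_1)\rangle = A_\sigma\, t^2 + 2\sigma s\langle \mu,v\rangle\, t + \sigma s\langle \mu,\Phi(\zeta_0)\rangle,
\]
with $A_\sigma\coloneqq\langle \lambda_0+\sigma s\mu,\Phi(\zeta_1)\rangle>0$ for small $s$ (since $\zeta_1\notin \Rc_{\lambda_0}$). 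Choosing the sign $\sigma$ so that $\sigma\langle\mu,\Phi(\zeta_0)\rangle\meg 0$, the minimum of this quadratic is bounded above by $-s^2\langle\mu,v\rangle^2/A_\sigma<0$, so $\lambda_0+\sigma s\mu\notin \Pc$, contradicting $\lambda_0\in\Int\Pc$.

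Claims 3 and 4 follow by inspection: $\Rc=\{0\}$ exactly when the conditions ``$\zeta\notin\Rc$'' and ``$\zeta\neq 0$'' coincide, and in the degenerate case any nonzero $\zeta_0\in\Rc$ has $\Phi(\zeta_0)=0$, forcing $\Lambda_+=\emptyset$. For the final assertion, fix $\lambda_0\in\Int\Pc$; positive definiteness of $\langle \lambda_0,\Phi\rangle$ on $\Rc^\perp$ provides $c>0$ with $\|\Phi(\zeta)\|\Meg c\|\zeta\|^2$ for $\zeta\in\Rc^\perp$, proving properness of $\Phi|_{\Rc^\perp}$. Since $\Phi(E)=\Phi(\Rc^\perp)$ and $\Phi$ is positively homogeneous of degree $2$, one may write $\conv\Phi(E)=\R_+\cdot K$ with $K\coloneqq \conv\Phi(S\cap\Rc^\perp)$ (where $S$ denotes the unit sphere of $E$), compact as the convex hull of a compact set in a finite-dimensional space, and satisfying $\langle \lambda_0,K\rangle\Meg c>0$; hence $0\notin K$ and the cone $\R_+\cdot K$ over such a compact convex set is closed. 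The main obstacle is the reverse inclusion for $\Int\Pc$: it requires careful bookkeeping of the hermitian versus real-linear structure on $F_\C$, and the sign freedom $\sigma=\pm 1$ is what lets us eliminate the otherwise inconvenient constant term $\langle\mu,\Phi(\zeta_0)\rangle$.
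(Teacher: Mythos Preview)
Your proof is correct, and it takes a genuinely different route from the paper's. The paper reduces to the non-degenerate case, invokes convex duality to identify $\overline{\conv\Phi(E)}=\Pc^\circ$, and then appeals to Bourbaki (properness of addition on proper cones, and the fact that interior points of a closed convex cone pair strictly positively with the dual cone) to obtain both the characterization of $\Int\Pc$ and the closedness of $\conv\Phi(E)$; your argument is entirely elementary and self-contained. The most notable difference is your treatment of the reverse inclusion $\Int\Pc\subseteq\{\lambda:\Rc_\lambda=\Rc\}$: rather than citing a separation-type result, you exhibit an explicit direction $\mu$ and a one-parameter family $\zeta_0+t\zeta_1$ along which the quadratic in $t$ is forced to take a negative value, the sign trick $\sigma=\pm 1$ neatly disposing of the constant term. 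For the closedness of $\conv\Phi(E)$ you use the compact-base description $\R_+\cdot K$ with $0\notin K$, whereas the paper closes $\Phi(E)$ via properness and then $C$ via properness of iterated addition. Your approach is shorter and avoids external references; the paper's approach makes the convex-geometric structure (that $\conv\Phi(E)$ is the bipolar cone) more transparent and reusable. One cosmetic remark: your final paragraph tacitly assumes $\Rc^\perp\neq\{0\}$ (otherwise $S\cap\Rc^\perp=\emptyset$), but in that degenerate case $\Phi\equiv 0$ and all claims are trivial.
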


\begin{proof}
	It is clear that $\Pc$ is a closed convex cone.
	Observe that, by~\cite[Corollary 17.1.2]{Rockafellar} the convex envelope $C$ of the cone $\Phi(E)$ equals 
	\[
	\Set{\sum_{j=0}^m \Phi(\zeta_j)\colon \zeta_1,\dots, \zeta_m\in E}.
	\]
	Observe that both $\Phi(E)$ and $C$ are left unchanged if $E$ is replaced by $\Rc^\perp$, while $\Lambda_+=\emptyset$ if $\Phi$ is degenerate. Hence, we may assume that $\Phi$ is non-degenerate.

	Assume first that the interior of $\Pc$ is non-empty. Then, $\overline C=\Pc^\circ$ does not contain lines, so that $+\colon \overline C\times \overline C\to \overline C$ is proper by~\cite[Corollary 1 to Proposition 11 of Chapter II, \S\ 6, No.\ 8]{BourbakiTVS}, so that $C=\Phi(E)+\cdots+\Phi(E)$ ($m$ times) is closed if $\Phi(E)$ is closed. In addition,~\cite[Corollary to Proposition 17 of Chapter II, \S\ 2, No.\ 7]{BourbakiTVS} implies that $\langle\lambda,h\rangle>0$ for every $\lambda$ in the interior of $\Pc$ and for every non-zero $h\in \overline C$, so that the interior of $\Pc$ is contained in
	\[
	\Pc'\coloneqq \Set{\lambda\in F' \colon \langle \lambda, h\rangle>0\quad \forall \lambda\in C \setminus \Set{0}}=\Set{\lambda\in F'\colon \forall h\in \Phi(E)\setminus \Set{0} \:\: \langle \lambda, h\rangle >0}.
	\] 
	In addition, the mapping $\lambda\circ \Phi$ is proper for every $\lambda$ in the interior of $\Pc$, so that $\Phi$ is proper by~\cite[Proposition 5 of Chapter I, \S\ 10, No.\ 1]{BourbakiGT1}. Hence, $\Phi(E)$ is closed, so that $C$ is closed as well by the previous remarks. Therefore, $\Pc'$ is open in $F'$ and contained in $\Pc$, so that it coincides with the interior of $\Pc$.
	In addition, the preceding remarks imply that $\lambda\circ \Phi$ is a non-degenerate positive hermitian form on $E$ for every $\lambda\in \Pc'$, so that $\Phi(\zeta)=0$ if and only if $\zeta=0$, and $\Lambda_+=\Pc'$.
	
	Conversely, assume that $\Lambda_+\neq \emptyset$, and observe that if $\lambda\in \Lambda_+$ and $\lambda'\in F'$ is such that 
	\[
	\max_{\abs{\zeta}=1} \abs{\langle \lambda', \Phi(\zeta)\rangle}<\min_{\abs{\zeta}=1} \langle \lambda, \Phi(\zeta)\rangle,
	\]
	then $\lambda+\lambda'\in \Lambda_+$. Hence, $\Lambda_+$ is an open subset of $F'$ contained in $\Pc$, so that the preceding remarks imply that it coincides with the interior of $\Pc$.
\end{proof}

We denote by 
\[
P_{\lambda,0}
\]
the orthoprojector of $\Hs_\lambda$ onto the space of constant functions (that, is the space generated by $e_{\lambda,0}$), for every $\lambda\in F'$.

\begin{prop}\label{prop:3}
	Take a CR $f\in L^2(\Nc)$. Then,
	\[
	\pi_{\lambda,\tau}(f)= \chi_{\Lambda_+}(\lambda) \pi_{\lambda,\tau}(f) P_{\lambda,0}
	\]
	for almost every $\lambda\in F'$ and for almost every $\tau\in (\Rc_\lambda)'_\R$. In particular, $f=0$ if $\Lambda_+=\emptyset$.
	
	Conversely, if $g\in L^2(\Nc)$ and 
	\[
	\pi_{\lambda,\tau}(g)= \chi_{\Lambda_+}(\lambda) \pi_{\lambda,\tau}(g) P_{\lambda,0}
	\]
	for  almost every $\lambda\in F'$ and for almost every $\tau\in (\Rc_\lambda)'_\R$, then $g$ is CR.
\end{prop}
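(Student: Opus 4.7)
The plan is to translate the CR condition into an operator identity on the Fourier side, and then to carry out a case analysis on $\lambda$ using Proposition~\ref{prop:1}(4). The bridge is the standard identity
\[
\pi_{\lambda,\tau}(Xf)=-\pi_{\lambda,\tau}(f)\,\dd\pi_{\lambda,\tau}(X)
\]
valid on polynomials (hence on $C^\infty(\pi_{\lambda,\tau})$) for any complexified left-invariant vector field $X$, which is obtained by translation and differentiation under the integral sign when $f$ is Schwartz. For general $f\in L^2(\Nc)$ I would first smooth by an approximate identity $\phi_\eps\in C_c^\infty(\Nc)$, noting that left convolution preserves $\overline{Z_v}f=0$ (since $\overline{Z_v}$ is left-invariant and $\overline{Z_v}(\phi_\eps*f)=\phi_\eps*\overline{Z_v}f=0$), and that $\pi_{\lambda,\tau}(\phi_\eps*f)\to\pi_{\lambda,\tau}(f)$ in Hilbert--Schmidt norm for a.e.\ $(\lambda,\tau)$ along a subsequence by Proposition~\ref{prop:2}. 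Hence the CR condition on $f$ becomes equivalent to
\[
\pi_{\lambda,\tau}(f)\,\dd\pi_{\lambda,\tau}(\overline{Z_v})=0
\]
on polynomials, for every $v\in E$ and a.e.\ $(\lambda,\tau)$.

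Using the decomposition $E=\Rc_\lambda\oplus E_{\lambda,+}\oplus E_{\lambda,-}$ and the fact that $Z_v$ extends $\partial_{E,v}$ at the origin, one reads off $\dd\pi_{\lambda,\tau}(\overline{Z_v})$ from Proposition~\ref{prop:1}(4). For $v\in E_{\lambda,+}$ the complex structures of $E$ and $E_\lambda$ agree, so $\overline{\partial_{E,v}}=\overline{\partial_{E_\lambda,v}}$ and one obtains the creation operator $M_{2\Phi_\lambda(\,\cdot\,,v)}$. For $v\in E_{\lambda,-}$ the two complex structures differ by a sign, so $\overline{\partial_{E,v}}=\partial_{E_\lambda,v}$ and one obtains the annihilation operator $-\partial_v$. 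For $v\in\Rc_\lambda$ the representation reduces to the scalar character $\ee^{-i\langle\tau,v\rangle}$, yielding the scalar operator $\tfrac12(-i\langle\tau,v\rangle+\langle\tau,Jv\rangle)I$, where $J$ denotes the complex structure of $E$. The forward direction then splits into three cases. If $\lambda\in\Lambda_+$, then $\Rc_\lambda=0$ and $E_\lambda=E_{\lambda,+}=E$; as $v$ ranges over $E$, the $\Phi_\lambda(\,\cdot\,,v)$ span the holomorphic linear functionals on $E_\lambda$, so multiplication against polynomials densely fills the ideal of polynomials vanishing at $0$, i.e.\ a dense subspace of $(\C e_{\lambda,0})^\perp$, giving exactly $\pi_{\lambda,\tau}(f)=\pi_{\lambda,\tau}(f)P_{\lambda,0}$. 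If $\lambda\notin\Pc$, pick $v\in E_{\lambda,-}\setminus\{0\}$; the annihilation $\partial_v$ is surjective on polynomials (e.g.\ $1=\partial_v z$ for the corresponding holomorphic coordinate), forcing $\pi_{\lambda,\tau}(f)=0$. If instead $\lambda\in\Pc\setminus\Lambda_+$ (by Proposition~\ref{prop:7} this forces $\Rc_\lambda\neq 0$, and also covers the degenerate case $\Lambda_+=\emptyset$ since $\Rc\subseteq\Rc_\lambda$ for every $\lambda$), then for $\tau\neq 0$ some $v\in\Rc_\lambda$ makes the above scalar nonzero, so again $\pi_{\lambda,\tau}(f)=0$. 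These three cases exhaust $F'\setminus\Lambda_+$.

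For the converse, given $g\in L^2(\Nc)$ satisfying the stated factorisation, one only needs to verify $\pi_{\lambda,\tau}(g)\,\dd\pi_{\lambda,\tau}(\overline{Z_v})=0$ for every $v$ and a.e.\ $(\lambda,\tau)$; outside $\Lambda_+$ this is automatic from $\pi_{\lambda,\tau}(g)=0$. On $\Lambda_+$ it suffices to check $P_{\lambda,0}\,\dd\pi_{\lambda,\tau}(\overline{Z_v})=0$, i.e.\ that multiplication by $\Phi_\lambda(\,\cdot\,,v)$ sends $\Hs_\lambda$ into $(\C e_{\lambda,0})^\perp$; this is the standard Bargmann--Fock adjointness between multiplication by holomorphic coordinates and the corresponding partial derivatives, applied to the constant function $e_{\lambda,0}$. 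Running the identity $\pi_{\lambda,\tau}(Xg)=-\pi_{\lambda,\tau}(g)\,\dd\pi_{\lambda,\tau}(X)$ in reverse and invoking Plancherel (Proposition~\ref{prop:2}) then yields $\overline{Z_v}g=0$ as a distribution, so $g$ is CR. The main obstacle is the delicate sign swap on $E_{\lambda,-}$: an antiholomorphic direction for $E$ becomes a holomorphic direction for $E_\lambda$, which is precisely the mechanism that converts a creation operator into an annihilation operator and forces $\pi_{\lambda,\tau}(f)$ to vanish off $\Pc$; a secondary bookkeeping point is the a.e.\ lifting of the smooth-case identity to $L^2$, which the approximate-identity argument above handles.
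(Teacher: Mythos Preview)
Your argument is correct and follows the same overall strategy as the paper: pass to the Fourier side via the Plancherel identity, use $\pi_{\lambda,\tau}(\overline{Z_v}f)=-\pi_{\lambda,\tau}(f)\,\dd\pi_{\lambda,\tau}(\overline{Z_v})$, and then run a case analysis on $\lambda$ exploiting Proposition~\ref{prop:1}(4). The treatment of $\lambda\in\Lambda_+$ (creation operators, image dense in $(\C e_{\lambda,0})^\perp$) and of $\lambda\notin\Pc$ (annihilation operator $\partial_v$ surjective on polynomials) is identical in substance to the paper's Step~I, and your converse matches Step~III.

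The one genuine difference is the handling of $\lambda\in\Pc\setminus\Lambda_+$. The paper first proves the weaker statement with $\chi_\Pc$ in place of $\chi_{\Lambda_+}$ and then upgrades: when $\Lambda_+\neq\emptyset$ the difference $\Pc\setminus\Lambda_+$ is Lebesgue-null, while when $\Lambda_+=\emptyset$ it either observes that $\Pc$ is null (empty interior) or, in the degenerate case, goes back to physical space and uses $L^2(\Rc)\cap\Hol(\Rc)=\{0\}$ on slices. You instead stay entirely on the Fourier side: for every $\lambda\in\Pc\setminus\Lambda_+$ one has $\Rc_\lambda\neq\{0\}$ (either because $\lambda\in\partial\Pc$ in the non-degenerate case, or because $\Rc\subseteq\Rc_\lambda$ in the degenerate case), so for $\tau\neq0$ some $v\in\Rc_\lambda$ makes $\dd\pi_{\lambda,\tau}(\overline{Z_v})$ a nonzero scalar, forcing $\pi_{\lambda,\tau}(f)=0$; the exceptional set $\{\tau=0\}$ is null in $(\Rc_\lambda)'_\R$. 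This is a clean, uniform argument that avoids the physical-space detour and treats the degenerate and non-degenerate situations on the same footing; the paper's route, by contrast, makes the vanishing $f=0$ when $\Lambda_+=\emptyset$ slightly more transparent (it exhibits the mechanism directly on $\Nc$ rather than through Plancherel). Either way, the proof goes through.
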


\begin{proof}
	\textsc{Step I.} Let us prove that 
	\[
	\pi_{\lambda,\tau}(f)= \chi_{\Pc}(\lambda) \pi_{\lambda,\tau}(f) P_{\lambda,0}
	\]
	for almost every $\lambda\in F'$ and for almost every $\tau\in (\Rc_\lambda)'_\R$.
	
	Fix a representative $\eta$ of $\lambda \mapsto [\tau \mapsto \pi_{\lambda,\tau}(f)]$, and observe that, by assumption, there is a negligible subset $N$ of $F'$ such that
	\[
	\eta(\lambda)(\tau)\dd \pi_{\lambda, \tau}(\overline{ Z_v})=0
	\]
	for  every $\lambda\in F'\setminus N$, for almost every $\tau\in (\Rc_\lambda)'_\R$, and for every $v$ in a countable dense subset of $E$, hence for every $v\in E$ by continuity. Take $\lambda\in \Pc \setminus N$. Then, 
	\[
	0=\eta(\lambda)(\tau)\dd \pi_{\lambda, \tau}(\overline{ Z_v})=2\eta(\lambda)(\tau)\langle \lambda, \Phi(\,\cdot\,,v)\rangle
	\]
	for every $v\in E$ and for almost every $\tau\in (\Rc_\lambda)'_\R$. Notice that, since $\langle \lambda,\Phi\rangle$ is (positive and) non-degenerate on $E_\lambda$, the $\langle \lambda, \Phi(\,\cdot\,,v)\rangle$, as $v$ runs through $\Rc_\lambda^\perp$, exhaust the space of complex linear forms on $E_\lambda$.
	Since the (holomorphic) polynomials on $E_\lambda$ are dense in $\Hs_\lambda$ by Proposition~\ref{prop:1}, this implies that $\eta(\lambda)(\tau)=\eta(\lambda)(\tau)P_{\lambda,0}$ for almost every $\tau\in (\Rc_\lambda)'_\R$.

	Then, take $\lambda\in F'\setminus (\Pc \cup N)$, and observe that there is a non-zero $v\in E_{\lambda,-}$, so that
	\[
	0=\eta(\lambda,\tau)\dd \pi_{\lambda, \tau}(\overline{ Z_v})=-\eta(\lambda,\tau)\partial_v
	\]
	for almost every $\tau\in (\Rc_\lambda)'_\R$. Since $\partial_v C^\infty(\pi_{\lambda,\tau})$ contains the set of polynomials on $E_\lambda$, which is dense in $\Hs_\lambda$ by Proposition~\ref{prop:1}, this implies that $\eta(\lambda)(\tau)=0$ for almost every $\tau\in (\Rc_\lambda)'_\R$.

	\textsc{Step II.} Since $\Pc$ is the closure of $ \Lambda_+$ when $\Lambda_+\neq \emptyset$ by Proposition~\ref{prop:7}, in order to complete the proof it will suffice to show that $f=0$ if $\Lambda_+=\emptyset$.
	Observe first that, if $\Pc$ has an empty interior, the assertion follows from \textsc{step I}. Then, assume that $\Pc$ has a non-empty interior $\Int\Pc$.
	Since, by Proposition~\ref{prop:7},
	\[
	\Int \Pc=\Set{\lambda\in F'\colon \forall \zeta\in E\setminus \Rc\:\: \langle \lambda, \Phi(\zeta)\rangle>0},
	\]
	this implies that $\Rc_\lambda=\Rc$ for every $\lambda\in \Int\Pc$, so that $\Rc\neq \Set{0}$ since $\Lambda_+=\emptyset$.  Therefore, the mapping $\zeta'\mapsto f(\zeta+\zeta')$ belongs to $L^2(\Rc)\cap \Hol(\Rc)$  for almost every $\zeta\in \Rc^\perp$. Since clearly $L^2(\Rc)\cap \Hol(\Rc)=\Set{0}$, the arbitrariness of $\zeta$ implies that  $f=0$.
	
	\textsc{Step III.} Now, take $g\in L^2(\Nc)$ such that
	\[
	\pi_{\lambda,\tau}(g)= \chi_{\Lambda_+}(\lambda) \pi_{\lambda,\tau}(g) P_{\lambda,0}
	\]
	for  almost every $\lambda\in F'$ and for almost every $\tau\in (\Rc_\lambda)'_\R$. Observe that we may assume that $\Lambda_+\neq \emptyset$, so that $\Phi$ is non-degenerate and $d_\lambda=0$ for almost every $\lambda\in F'$. Then, Proposition~\ref{prop:1} implies that, for every $v\in E$,
	\[
	\pi_{\lambda,0}(\overline{Z_v} g)= 2\pi_{\lambda,0}(g) P_{\lambda,0}\langle \lambda, \Phi(\,\cdot\,,v)\rangle =0
	\]
	for almost every $\lambda\in \Lambda_+$, while clearly $\pi_{\lambda,0}(\overline{Z_v} g)=0$ for almost every $\lambda\in F'\setminus \Lambda_+$. Then, $\overline{Z_v}g=0$ for every $v\in E$.
\end{proof}

\begin{prop}\label{prop:5}
	Let $K$ be a closed   subset of $\overline{\Lambda_+}$, and take $f\in L^2(\Nc)$. Then, the following conditions are equivalent:
	\begin{enumerate}
		\item[\textnormal{(1)}] $\pi_{\lambda,\tau}(f)=\chi_{K}(\lambda) \pi_{\lambda,0}(f) P_{\lambda,0}$ for almost every $\lambda\in F'$ and for almost every $\tau\in (\Rc_\lambda)'_\R$;
		
		\item[\textnormal{(2)}] $f$ is CR and $\Fc_F(f(\zeta,\,\cdot\,))$ is supported in $K$ for almost every $\zeta\in E$. 
	\end{enumerate}
\end{prop}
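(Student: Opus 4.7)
The plan is to directly compute $\pi_{\lambda,\tau}(f)$ in terms of $g(\zeta,\lambda) \coloneqq \Fc_F(f(\zeta,\cdot))(\lambda)$ and reduce both conditions to the same statement about~$g$. First I would dispose of the degenerate case $\Lambda_+ = \emptyset$: then $K = \emptyset$, and both (1) and (2) force $f=0$, the former by Proposition~\ref{prop:2} and the latter because $\Fc_F(f(\zeta,\cdot)) = 0$ for almost every $\zeta$. So assume $\Lambda_+ \neq \emptyset$; by Proposition~\ref{prop:7}, $\Phi$ is non-degenerate, $\Lambda_+ = \Int\Pc$, and $\Rc_\lambda = \{0\}$ for $\lambda\in\Lambda_+$, while $\partial\Lambda_+$ is a Lebesgue-null subset of $F'$, so that $\chi_K = \chi_{K\cap\Lambda_+}$ almost everywhere.

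Next I would establish the key identity: decomposing $E = \Rc_\lambda\oplus E_\lambda$ and writing $\zeta = \zeta_1+\zeta_2$ accordingly, the explicit formula for $\pi_{\lambda,\tau}$ (together with $\langle\lambda,\Im\Phi(\zeta_1,\zeta_2)\rangle=0$ when $\zeta_1\in\Rc_\lambda$) yields, by Fubini,
\[
\pi_{\lambda,\tau}(f)\psi(\omega) = \int_{E_\lambda} \tilde g(\zeta_2,\lambda,\tau)\, e^{2\Phi_\lambda(\omega,\zeta_2)-\Phi_\lambda(\zeta_2)}\,\psi(\omega-\zeta_2)\,d\zeta_2,
\]
where $\tilde g(\zeta_2,\lambda,\tau) \coloneqq \int_{\Rc_\lambda} g(\zeta_1+\zeta_2,\lambda)\,e^{-i\langle\tau,\zeta_1\rangle}\,d\zeta_1$. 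Applying this to $\psi = e_{\lambda,0}$ and using the density of polynomials in $\Hs_\lambda$ from Proposition~\ref{prop:1}, the resulting Bargmann-type transform is injective, giving $\pi_{\lambda,\tau}(f)=0$ if and only if $\tilde g(\cdot,\lambda,\tau) = 0$ almost everywhere on $E_\lambda$. Fourier inversion on $\Rc_\lambda$ then converts ``$\pi_{\lambda,\tau}(f)=0$ for almost every $\tau\in(\Rc_\lambda)'_\R$'' into ``$g(\zeta,\lambda) = 0$ for almost every $\zeta\in E$''.

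With this equivalence both implications follow quickly. For (1)$\Rightarrow$(2): condition (1) implies $\pi_{\lambda,\tau}(f) = \chi_{\Lambda_+}(\lambda)\pi_{\lambda,\tau}(f) P_{\lambda,0}$ almost everywhere (using $K\subseteq \overline{\Lambda_+}$ and $\tau=0$ on $\Lambda_+$), so $f$ is CR by Proposition~\ref{prop:3}; moreover $\pi_{\lambda,\tau}(f) = 0$ for almost every $\lambda\notin K$, so by the key identity and Fubini, $\supp(g(\zeta,\cdot))\subseteq K$ for almost every $\zeta$. For (2)$\Rightarrow$(1): since $f$ is CR, Proposition~\ref{prop:3} and $\tau=0$ on $\Lambda_+$ give $\pi_{\lambda,\tau}(f) = \chi_{\Lambda_+}(\lambda)\pi_{\lambda,0}(f)P_{\lambda,0}$; the support condition on $g$ and Fubini give $g(\cdot,\lambda)=0$ almost everywhere for almost every $\lambda\notin K$, whence $\pi_{\lambda,0}(f)=0$ for almost every $\lambda\in\Lambda_+\setminus K$ by the key identity, yielding $\pi_{\lambda,\tau}(f) = \chi_K(\lambda)\pi_{\lambda,0}(f) P_{\lambda,0}$. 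The main obstacle is the injectivity step in the key identity, namely showing that the Bargmann-type transform $h\mapsto\bigl[\omega\mapsto\int_{E_\lambda} h(\zeta_2) e^{2\Phi_\lambda(\omega,\zeta_2)-\Phi_\lambda(\zeta_2)}\,d\zeta_2\bigr]$ is injective on $L^2(E_\lambda)$; this is standard via coherent-state or polynomial expansion, but deserves care.
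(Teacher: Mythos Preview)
Your overall strategy is sound and genuinely different from the paper's: you compute $\pi_{\lambda,\tau}(f)$ directly in terms of the partial transform $g(\zeta,\lambda)=\Fc_F(f(\zeta,\cdot))(\lambda)$ and reduce both conditions to the statement $g(\cdot,\lambda)=0$ for a.e.\ $\lambda\notin K$, whereas the paper proves (1)$\Rightarrow$(2) by truncating to compact $K_j\subseteq K$, using the Plancherel inversion formula to write $f_j$ and hence $\Fc_F(f_j(\zeta,\cdot))$ explicitly, and passing to the limit. However, the injectivity claim that you flag as ``the main obstacle'' and then set aside as standard is \emph{false} as stated. The transform
\[
B\colon h\longmapsto\Bigl[\omega\mapsto\int_{E_\lambda} h(\zeta)\,e^{2\Phi_\lambda(\omega,\zeta)-\Phi_\lambda(\zeta)}\,d\zeta\Bigr]
\]
is not injective on $L^2(E_\lambda)$. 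In the model case $E_\lambda=\C$, $\Phi_\lambda(\omega,\zeta)=\omega\bar\zeta$, the function $h(\zeta)=\bar\zeta\,e^{-\abs{\zeta}^2}\in L^2(\C)$ satisfies $Bh\equiv 0$, since the coefficient of $\omega^k$ in $Bh(\omega)$ is a multiple of $\int_\C \bar\zeta^{\,k+1}e^{-2\abs{\zeta}^2}\,d\zeta=0$ for every $k\Meg 0$. Polynomial expansion only yields the ``anti-holomorphic'' moments $\int h(\zeta)\bar\zeta^{\,\alpha}e^{-\Phi_\lambda(\zeta)}\,d\zeta$, and pairing with coherent states merely reproduces $Bh$, so neither route closes the gap.

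What \emph{is} true, and what saves your approach, is that the full operator identity $\pi_\lambda(f)=\int_E g(\zeta,\lambda)\,\pi_\lambda(\zeta,0)\,d\zeta$ (for $\lambda\in\Lambda_+$) is the Weyl transform of $g(\cdot,\lambda)$, and this is injective on $L^2(E)$: for instance because the matrix coefficients $\langle\pi_\lambda(\zeta,0)e_{\lambda,\alpha}\mid e_{\lambda,\beta}\rangle$ form an orthogonal basis of $L^2(E)$, or equivalently because the Weyl transform is, up to a constant, an isometry onto $\Lin^2(\Hs_\lambda)$. In other words, you must test your key identity on \emph{all} $\psi$ (say all monomials), not only on $e_{\lambda,0}$; the resulting system of moment conditions then forces $\int \tilde g(\zeta)\,\zeta^\gamma\bar\zeta^{\,\alpha}e^{-\Phi_\lambda(\zeta)}\,d\zeta=0$ for all $\alpha,\gamma$, and density of polynomials times the Gaussian in $L^2(E_\lambda)$ gives $\tilde g=0$. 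With this correction the rest of your argument goes through.
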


\begin{proof}
	Notice that we may assume that $\Lambda_+\neq\Set{0}$, so that $d_\lambda=0$ for almost every $\lambda\in F'$.
	
	(1) $\implies$ (2).  Define $f_j\in L^2(\Nc)$ so that $\pi_\lambda(f_j)=\chi_{K_j}\pi_\lambda(f)$ for every $j\in \N$, where $(K_j)$ is an increasing sequence of compact subsets of $K$ which covers $K$ (cf.~Proposition~\ref{prop:2}). Observe that $f_j$ is continuous for every $j\in \N$, and that
	\[
	f_j(\zeta,x)=\frac{2^{n-m}}{\pi^{n+m}} \int_{K_j} \tr(\pi_\lambda(f)\pi_\lambda(\zeta,x)^*) \abs{\Pfaff(\lambda)}\,\dd \lambda =\frac{2^{n-m}}{\pi^{n+m}} \int_{K_j} \tr(\pi_\lambda(f)\pi_\lambda(\zeta,0)^*) \ee^{i\langle \lambda, x \rangle}\abs{\Pfaff(\lambda)}\,\dd \lambda
	\]
	for every $(\zeta,x)\in \Nc$. In particular,
	\[
	\Fc_F(f_j(\zeta,\,\cdot\,))(\lambda)=\frac{2^{n}}{\pi^n} \chi_{K_j}(\lambda) \tr(\pi_\lambda(f)\pi_\lambda(\zeta,0)^*) \abs{\Pfaff(\lambda)}
	\]
	for almost every $\lambda\in F'$.
	In addition, $(f_j)$ converges to $f$ in $L^2(\Nc)$, so that $(f_j(\zeta,\,\cdot\,))$ converges to $f(\zeta,\,\cdot\,)$ in $L^2(F)$ for almost every $\zeta\in E$, up to a subsequence. Hence, $\Fc_F(f(\zeta,\,\cdot\,))$ is supported in $K$ for almost every $\zeta\in E$. Since Proposition~\ref{prop:3} implies that $f$ is CR, this proves (2).
	
	(2) $\implies$ (1). Observe that
	\[
	\pi_\lambda(f)=\int_E \Fc_F(f(\zeta,\cdot\,))(\lambda)\pi_\lambda(\zeta,0)\,\dd \zeta=0
	\]
	for almost every $\lambda\in F'\setminus K$. The assertion then follows from Proposition~\ref{prop:3}.
\end{proof}

\subsection{An Auxiliary Rockland Operator}\label{sec:1:3}

We define 
\[
\Lc\coloneqq \left(\sum_{j} (Z_{v_j}\overline{Z_{v_j}}+\overline{Z_{v_j}} Z_{v_j})\right)^2-\sum_{k} U_k^2,
\]
where  $(v_j)$ is an orthonormal basis of $E$ over $\C$, while $(U_k)$ is an orthonormal basis of invariant vector fields on the central subgroup $F$ on $\Nc$. 
It is not hard to prove that $\Lc$ does not depend on the choice of $(v_j)$ and $(U_k)$.
If we endow $\Nc$ with the dilations $t\cdot (\zeta,x)\coloneqq (t^{1/2}\zeta, t x)$, then $\Lc$ is homogeneous of degree $2$.

\begin{prop}\label{prop:8}
	 For every $\lambda\in F'$ and for every $\tau \in (\Rc_\lambda)'_\R$, the following hold:
	 \begin{enumerate}
	 	\item[\textnormal{(1)}]  $\dd \pi_{\lambda,\tau}(\Lc)$ has a purely discrete spectum;\footnote{In other words, $\Hs_\lambda$ has an orthonormal basis consting of eigenvectors of $\dd \pi_{\lambda,\tau}(\Lc)$, and each eigenvalue of $\dd \pi_{\lambda,\tau}(\Lc)$ has finite multiplicity. }
	 	
	 	\item[\textnormal{(2)}] $e_{\lambda,0}$ is an eigenvalue of $\dd \pi_{\lambda,\tau}(\Lc)$, and
	 	\[
	 	\langle\dd \pi_{\lambda,\tau}(\Lc) e_{\lambda,0}\vert e_{\lambda,0}\rangle=\left(\left(2\tr_\C\abs{J_\lambda}+\frac 1 2\abs{\tau}^2\right)^2+\abs{\lambda}^2\right)=\min \sigma(\dd \pi_{\lambda,\tau}(\Lc));
	 	\]
	 	
	 	\item[\textnormal{(3)}]  $P_{\lambda,0}$ is the eigenprojector of $\dd \pi_{\lambda,\tau}(\Lc)$ corresponding to the eigenvalue $ \min \sigma(\dd \pi_{\lambda,\tau}(\Lc))$.
	 \end{enumerate} 
 	In particular, $\Lc $ is a positive Rockland operator.
\end{prop}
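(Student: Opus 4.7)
The plan is to diagonalize $d\pi_{\lambda,\tau}(\Lc)$ explicitly on a suitable basis of $\Hs_\lambda$. First I split $E=E_{\lambda,+}\oplus E_{\lambda,-}\oplus\Rc_\lambda$ and pick a complex $E$-orthonormal basis $(e_j)$ of $\Rc_\lambda^\perp=E_\lambda$ that simultaneously diagonalizes the positive self-adjoint operator $|J_\lambda|$, so that $|J_\lambda|e_j=\mu_j e_j$ with $\mu_j>0$ and $\sum_j\mu_j=\tr_\C|J_\lambda|$; a short verification using the defining identity for $J_\lambda$ gives $\Phi_\lambda(e_j,e_k)=\mu_j\delta_{jk}$ throughout $E_\lambda$. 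Completing $(e_j)$ with an $E$-orthonormal basis of $\Rc_\lambda$ yields a global $(v_j)$, which is admissible because $\Lc$ is independent of the chosen basis.

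With this choice, Proposition~\ref{prop:1}(4) together with the identifications $\partial_{E,v}=\partial_{E_\lambda,v}$ on $E_{\lambda,+}$ and $\partial_{E,v}=\overline{\partial_{E_\lambda,v}}$ on $E_{\lambda,-}$ translates $Z_v$ and $\overline{Z_v}$ into annihilation/creation operators: for $v\in E_{\lambda,+}$ one has $d\pi_{\lambda,\tau}(Z_v)=-\partial_v$ and $d\pi_{\lambda,\tau}(\overline{Z_v})=2\Phi_\lambda(\omega,v)$; on $E_{\lambda,-}$ the two formulas are exchanged; and for $v\in\Rc_\lambda$ the $\Phi$-term of $Z_v$ vanishes at the origin, so the action reduces to a scalar built from $\tau$. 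A short computation based on the commutator $[2\Phi_\lambda(\omega,v),-\partial_w]=-2\Phi_\lambda(w,v)$ shows that every $v\in E_\lambda$ contributes uniformly as
\[d\pi_{\lambda,\tau}(Z_v\overline{Z_v}+\overline{Z_v}Z_v)=-2\Phi_\lambda(v,v)\,I-4\Phi_\lambda(\omega,v)\partial_v,\]
while every $v\in\Rc_\lambda$ contributes the scalar $-\tfrac12(\langle\tau,v\rangle^2+\langle\tau,iv\rangle^2)I$, summing over the basis of $\Rc_\lambda$ to $-\tfrac12|\tau|^2 I$. Using $\Phi_\lambda(e_j,e_k)=\mu_j\delta_{jk}$, these assemble into
\[d\pi_{\lambda,\tau}(\Lc_0)=-\Bigl(2\tr_\C|J_\lambda|+\tfrac12|\tau|^2\Bigr)I-4\sum_j\mu_j\,\omega_j\partial_{e_j},\]
where $\Lc_0:=\sum_j(Z_{v_j}\overline{Z_{v_j}}+\overline{Z_{v_j}}Z_{v_j})$; separately, $d\pi_{\lambda,\tau}(U_k)=-i\langle\lambda,u_k\rangle I$ gives $d\pi_{\lambda,\tau}(-\sum_k U_k^2)=|\lambda|^2 I$.

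Each $\omega_j\partial_{e_j}$ is a number operator, diagonal on the monomial basis $(\omega^\alpha)_{\alpha\in\N^{n-d_\lambda}}$ of $\Hs_\lambda$ (orthogonal and total by Proposition~\ref{prop:1}(2)) with eigenvalue $\alpha_j$; hence
\[d\pi_{\lambda,\tau}(\Lc)\omega^\alpha=\Bigl(\Bigl(2\tr_\C|J_\lambda|+\tfrac12|\tau|^2+4\sum_j\mu_j\alpha_j\Bigr)^{2}+|\lambda|^{2}\Bigr)\omega^\alpha.\]
Since any upper bound on this eigenvalue is realized by only finitely many $\alpha$, the spectrum is purely discrete with finite multiplicities, which is (1). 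The minimum is attained exactly at $\alpha=0$, corresponding to the constant function $e_{\lambda,0}$; this gives the formula of (2) and identifies $P_{\lambda,0}$ as the associated eigenprojector, proving (3). Positivity of $d\pi_{\lambda,\tau}(\Lc)$ is evident, and the minimum eigenvalue vanishes only when $\lambda=\tau=0$ and $E_\lambda=\{0\}$, i.e.\ only for the trivial representation $\pi_{0,0}$; hence $d\pi_{\lambda,\tau}(\Lc)$ is injective on smooth vectors for every non-trivial irreducible unitary representation, so $\Lc$ is a positive Rockland operator.

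The main delicacy lies in bookkeeping the two complex structures on $\Rc_\lambda^\perp$: on $E_{\lambda,-}$ the structures of $E$ and of $E_\lambda$ are opposite, the roles of $Z_v$ and $\overline{Z_v}$ as annihilation/creation are swapped compared to $E_{\lambda,+}$, and one must verify that the symmetric combination $Z_v\overline{Z_v}+\overline{Z_v}Z_v$ still produces the same contribution $-2\Phi_\lambda(v,v)I-4\Phi_\lambda(\omega,v)\partial_v$; once this check is in place, the rest reduces to a standard Fock-space calculation.
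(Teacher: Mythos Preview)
Your proof is correct and follows essentially the same route as the paper: both choose an $E$-orthonormal basis of $E_\lambda$ adapted to $E_{\lambda,\pm}$, diagonalize $d\pi_{\lambda,\tau}(\Lc)$ on the (holomorphic) monomial basis of $\Hs_\lambda$, read off the eigenvalues $\bigl(2\sum_j\Phi_\lambda(v_j)(1+2\alpha_j)+\tfrac12|\tau|^2\bigr)^2+|\lambda|^2$, and identify the minimum at $\alpha=0$. Your presentation is a bit more explicit about the Fock-space creation/annihilation mechanism and about the bookkeeping on $E_{\lambda,-}$, whereas the paper handles both points more tersely by citing Proposition~\ref{prop:1}(4) and verifying $\Phi_\lambda(v)=\langle v\mid |J_\lambda|v\rangle$ directly, but the underlying computation is the same.
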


Recall that a left-invariant differential operator on a homogeneous group is said to be Rockland if it is homogeneous and is its image under every non-trivial irreducible continuous unitary representation  is one-to-one on the set of smooth vectors. Rockland operators may also be equivalently characterized as hypoelliptic homogeneous left-invariant differential operators, thanks to the celebrated solution of the Rockland conjecture by Helffer and Nourrigat (cf~\cite{HelfferNourrigat}).

\begin{proof}
	Fix $\lambda\in F'$ and $\tau\in (\Rc_\lambda)'_\R$, take an orthonormal basis $(v_j)$ of $E_\lambda $ which induce bases of $E_{\lambda,-}$ and $E_{\lambda,+}$, and denote by $\Delta_\lambda$ and $\Delta_F$ the standard (negative) Laplacians on $\Rc_\lambda$ and $F$, respectively. Then,
	\[
	\Lc= \left(\sum_{j} (Z_{v_j}\overline{Z_{v_j}}+\overline{Z_{v_j}} Z_{v_j})+\frac 1 2 \Delta_\lambda  \right)^2-\Delta_F.
	\]
	Next, define
	\[
	e_{\lambda,\alpha}\coloneqq \prod_{j} \Phi_\lambda(\,\cdot\,, v_j)^{\alpha_j}
	\]
	for every $\alpha\in \N^{n-d_\lambda}$, and observe that $(w_\alpha)$ is a \emph{total} orthogonal family of elements of $\Hs_\lambda$.\footnote{Orthogonality is proved easily. Since clearly $(w_\alpha)$ generates the set of holomorphic polynomials on $E_\lambda$, the assertion then follows from Proposition~\ref{prop:1}.} In addition, Proposition~\ref{prop:1} implies that
	\[
	\dd \pi_{\lambda,\tau}(\Lc) e_{\lambda,\alpha}=  \left[\left(2\sum_{j} \Phi_\lambda(v_j) (1+2 \alpha_j)+\frac 1 2 \abs{\tau}^2 \right)^2+\abs{\lambda}^2\right] e_{\lambda,\alpha}
	\]
	for every $\alpha\in \N^{n-d_\lambda}$. Observe that $\Phi_\lambda(v_j)>0$ for every $j$, since $\Phi_\lambda$ is positive and non-degenerate.
	It then follows that $\dd \pi_{\lambda,\tau}(\Lc)$ is positive and has a purely discrete spectrum, and that its least eigenvalue is $ \left(2\sum_{j} \Phi_\lambda(v_j) +\abs{\tau}^2 \right)^2+\abs{\lambda}^2$, with corresponding eigenspace $\R e_{\lambda,0}$. It only remains to show that $\sum_{j} \Phi_\lambda(v_j)=\tr \abs{J_\lambda}$. This, in turn, is a consequence of the fact that
	\[
	\Phi_\lambda(v)= \langle v\vert \abs{J_\lambda} v \rangle
	\]
	for every $v\in E_\lambda$, which is readily established.
\end{proof}

Thanks to Proposition~\ref{prop:8}, one may develop a functional calculus for $\Lc$, considered as an essentially self-adjoint operator on $L^2(\Nc)$ with domain $C^\infty_c(\Nc)$. In particular, for every bounded measurable function $m\colon \R\to \C$ there is a unique $\Kc(m)\in \Sc'(\Nc)$ such that $m(\Lc) f=f*\Kc(m)$ for every $f\in \Sc(\Nc)$. Thanks to a celebrated result by Hulanicki (cf.~\cite{Hulanicki}), $\Kc(\Sc(\R))\subseteq \Sc(\Nc)$. Cf.~\cite{Martini2, Martini, Calzi, Calzi3} for more information on the mapping $\Kc$. 

In particular, if $m\in \Sc(\R)$, $\lambda\in F'$, and $\tau\in (\Rc_\lambda)'_\R$, then
\[
\pi_{\lambda,\tau}(\Kc(m))= m(\dd \pi_{\tau,\lambda}(\Lc)),
\]
thanks to~\cite[Proposition 3.7]{Martini}, since nilpotent groups are amenable.

\subsection{Operations on Convex Sets}

\begin{deff}
	We define the polar $A^\circ$ of a subset $A$ of $F$ by
	\[
	A^\circ\coloneqq \Set{\lambda\in F'\colon \forall v\in A \:\: \langle \lambda,v\rangle\Meg -1}.
	\]
	Analogously, we define the polar $B^\circ $ of a subset $B$ of $F'$,
	\[
	B^\circ \coloneqq \Set{v\in F\colon \forall \lambda\in B \:\: \langle \lambda,v \rangle\Meg -1}.
	\]
\end{deff}

Observe that $A^{\circ}$ is a closed convex subset of $F'$, and that $A^{\circ \circ}$ is the closed convex envelope of $A\cup \Set{0}$, thanks to the bipolar theorem~\cite[Theorem 1 of Chapter II, \S\ 6, No.\ 3]{BourbakiTVS}. If $A$ is a cone, then $A^\circ= \Set{\lambda\in F'\colon \forall v\in A \:\: \langle\lambda,v \rangle \Meg 0}$. In particular, $A^\circ$ is a closed convex cone. If $A$ is a vector subspace of $F$, then $A^\circ =\Set{\lambda \in F'\colon \forall v \in A \:\: \langle \lambda,v \rangle=0}$.

\begin{deff}
	Let $K$ be a closed convex subset of $F'$. We define the supporting function of $K$ as
	\[
	H_K\colon F\ni v \mapsto \sup_{\lambda\in K} (-\langle \lambda, v \rangle)\in [-\infty,\infty].
	\]
\end{deff}

Then, $H_K$ is a lower semi-continuous, positively homogeneous, sub-additive convex function, and it is finite and continuous when $K$ is non-emtpy and compact (cf.~\cite[Exercise 9 of Chapter II, \S\ 6]{BourbakiTVS} and~\cite[Section 4.3]{Hormander}, where a different convention is adopted). Further, $H_\emptyset(v)=\infty<H_K(v)$ for every non-empty closed convex subset $K$ of $F'$ and for every $v\in F$. 
In addition,
\[
K=\Set{\lambda \in F'\colon \forall v\in F \:\: \langle \lambda, v \rangle \Meg -H_K(v)}
\]
(cf.~\cite[Theorem 4.3.2]{Hormander}).
Observe that, if $K$ contains $\Set{0}$, then
\[
H_{K}(v)=\inf\Set{r \in\R\colon v\in r K^\circ},
\]
so that $H_{K}$ may be interpreted as the Minkowski functional (or gauge) associated with  $K^\circ$.

\begin{lem}\label{lem:1}
	Let $K$ be a closed convex subset of $F'$, and let $F_1$ be a closed subspace of $ F$. Denote by $K_1$ the orthogonal projection of $K$ into $F_1^{\circ\perp}$, identified with the dual of $F_1$. Then,
	\[
	H_K(v)=H_{K_1}(v)
	\] 
	for every $v\in F_1$.
\end{lem}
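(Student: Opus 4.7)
The plan is to exploit the direct sum decomposition $F' = F_1^\circ \oplus F_1^{\circ\perp}$ and the fact that $F_1^\circ$ is exactly the annihilator of $F_1$, so that pairing with elements of $F_1$ factors through the projection onto $F_1^{\circ\perp}$.

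More precisely, let $P\colon F'\to F_1^{\circ\perp}$ denote the orthogonal projection, so that $K_1=P(K)$. For every $\lambda\in F'$ one writes $\lambda=P\lambda+(\lambda-P\lambda)$ with $\lambda-P\lambda\in F_1^\circ$. Since $F_1$ is a closed subspace, $F_1^\circ=\{\mu\in F'\colon \langle \mu,v\rangle=0 \text{ for every }v\in F_1\}$, hence for any $v\in F_1$,
\[
\langle \lambda,v\rangle=\langle P\lambda,v\rangle.
\]
Under the canonical identification of $F_1^{\circ\perp}$ with $F_1'$ (which is exactly restriction to $F_1$), the right-hand side is the pairing $\langle P\lambda,v\rangle_{F_1'\times F_1}$.

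Taking the supremum over $\lambda\in K$ on both sides, and noting that $\lambda\mapsto P\lambda$ carries $K$ onto $K_1$, gives
\[
H_K(v)=\sup_{\lambda\in K}(-\langle \lambda,v\rangle)=\sup_{\lambda\in K}(-\langle P\lambda,v\rangle)=\sup_{\mu\in K_1}(-\langle \mu,v\rangle)=H_{K_1}(v)
\]
for every $v\in F_1$, which is the claim.

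There is essentially no obstacle here; the proof is a one-line computation once the identification $F_1^{\circ\perp}\cong F_1'$ is unpacked as restriction of linear forms, and the only point to verify is that $\lambda-P\lambda$ lies in $F_1^\circ$, which is immediate from the hilbertian identification.
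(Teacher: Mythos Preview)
Your proof is correct and takes essentially the same approach as the paper: both rely on the decomposition $\lambda=P\lambda+(\lambda-P\lambda)$ with $\lambda-P\lambda\in F_1^\circ$ and the fact that $F_1^\circ$ annihilates $F_1$, so that $\langle\lambda,v\rangle=\langle P\lambda,v\rangle$ for $v\in F_1$. The paper compresses this into a single sentence, but the argument is identical.
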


\begin{proof}
	It suffices to apply the definition, observing that $\langle\lambda+\lambda', v \rangle=\langle \lambda, v \rangle$ for every $\lambda\in F_1^{\circ \perp}$, for every $\lambda'\in F_1^\circ$, and for every $v\in F_1$.  
\end{proof}

\section{General Paley--Wiener--Schwartz Theorems}\label{sec:3}

In this section, we prove a Paley--Wiener--Schwartz theorem for CR tempered distributions. Some refinements will be considered in Section~\ref{sec:5}.

\begin{deff}
	Let $K$ be a compact subset of $F'$.
	We define $\widetilde \Oc_K(\Nc)$ as the space of CR $u\in \Sc'(\Nc)$ such that $(I\otimes\Fc_F) u$ is supported in $E\times K$.\footnote{Here, $I\otimes \Fc_F\colon \Sc'(\Nc)\to \Sc'(E\times F')$ is defined by means of the canonical identifications $\Sc'(\Nc)\cong \Sc'(E)\widehat \otimes \Sc'(F)$ and $\Sc'(E\times F')\cong \Sc'(E)\widehat \otimes \Sc'(F')$.}
	
	We define $\Oc_K(\Nc)$ as the space of $f\in C^\infty(\Nc)$ such that $f\cdot \Hc^{2n+m}\in \widetilde \Oc_K(\Nc)$ and such that there is $k\in \Nc$ such that for every left-invariant differential operator $X$ on $\Nc$ the function $(1+\abs{\,\cdot\,})^{-k} X f$ is bounded.
\end{deff}

\begin{prop}
	Let $K$ be a compact subset of $\Nc$. Then, $\Oc_K(\Nc)$ is the space of $f\in C^\infty(\Nc)$ such that the following hold:
	\begin{enumerate}
		\item[\textnormal{(1)}] there is $k\in \Nc$ such that for every left-invariant differential operator $X$ on $\Nc$ the function $(1+\abs{\,\cdot\,})^{-k} X f$ is bounded;
		
		\item[\textnormal{(2)}] $\overline{Z_v} f=0$ for every $v\in E$;
		
		\item[\textnormal{(3)}] $\Fc_F(f(\zeta,\,\cdot\,))$ is supported in $K$ for every $\zeta\in E$.
	\end{enumerate}
\end{prop}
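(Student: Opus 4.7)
The plan is to unpack the defining conditions of $\Oc_K(\Nc)$ (and thus of $\widetilde\Oc_K(\Nc)$) and reduce the statement to two elementary equivalences. Condition (1) appears verbatim as the polynomial growth clause in the definition of $\Oc_K(\Nc)$, so nothing is needed there. It remains to show, for a smooth $f$ satisfying (1), that (i) $f\cdot\Hc^{2n+m}$ is CR as a tempered distribution if and only if $\overline{Z_v}f=0$ pointwise for every $v\in E$, and (ii) $(I\otimes\Fc_F)(f\cdot\Hc^{2n+m})$ is supported in $E\times K$ if and only if $\Fc_F(f(\zeta,\cdot))$ is supported in $K$ for every $\zeta\in E$.

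For (i), smoothness of $f$ allows one to verify, by integration by parts against a test function, that $\overline{Z_v}(f\cdot\Hc^{2n+m})=(\overline{Z_v}f)\cdot\Hc^{2n+m}$, so the distributional and pointwise CR conditions coincide. For (ii), the implication from the pointwise statement to joint support is immediate: testing $(I\otimes\Fc_F)(f\cdot\Hc^{2n+m})$ against tensor products $\phi\otimes\psi$ with $\phi\in C^\infty_c(E)$ and $\psi\in C^\infty_c(F'\setminus K)$ yields zero by Fubini. The substantive direction is the converse: the polynomial growth hypothesis (1) forces the map $\zeta\mapsto f(\zeta,\cdot)$ to be continuous (in fact smooth) from $E$ into $\Sc'(F)$, and hence $\zeta\mapsto \Fc_F(f(\zeta,\cdot))$ to be continuous from $E$ into $\Sc'(F')$. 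Given the joint support hypothesis, for any $\psi\in C^\infty_c(F'\setminus K)$ one has $\int_E\phi(\zeta)\langle\Fc_F(f(\zeta,\cdot)),\psi\rangle\,\dd\zeta=0$ for every $\phi\in C^\infty_c(E)$; choosing $\phi$ to approximate a Dirac mass at any fixed $\zeta_0$ then yields $\langle\Fc_F(f(\zeta_0,\cdot)),\psi\rangle=0$, which is (3) at $\zeta_0$.

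The only mildly delicate step is the continuity assertion in the forward direction of (ii). It follows from condition (1) because the uniform polynomial bound on $Xf$ over all left-invariant $X$ provides, in particular, locally uniform estimates in $\zeta$ for $f(\zeta,\cdot)$ and its $x$-derivatives as elements of $\Sc'(F)$. Once this is available, the proposition is a routine unwinding of definitions.
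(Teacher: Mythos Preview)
Your proposal is correct and follows essentially the same route as the paper's proof. Both arguments reduce to testing $(I\otimes\Fc_F)(f\cdot\Hc^{2n+m})$ against tensor products $\phi\otimes\psi$, rewriting this as $\int_E \phi(\zeta)\langle\Fc_F(f(\zeta,\cdot)),\psi\rangle\,\dd\zeta$, and invoking continuity of $\zeta\mapsto\langle\Fc_F(f(\zeta,\cdot)),\psi\rangle$ to pass from the vanishing of the integral for all $\phi$ to vanishing of the integrand pointwise; your only additions are to spell out the (trivial) equivalence of the distributional and pointwise CR conditions for smooth $f$, and to make the Dirac-approximation step explicit where the paper simply says ``the assertion follows''.
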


\begin{proof}
	It suffices to show that, if $f$ satisfies (1) and (2), then $(I\otimes \Fc_F) f$ is supported in $E\times K$ if and only if $\Fc_F(f(\zeta,\,\cdot\,))$ is supported in $K$ for every $\zeta\in E$. Since clearly 
	\[
	\langle (I\otimes \Fc_F) f, \phi\otimes \psi\rangle= \langle f, \phi\otimes \Fc_F \psi\rangle= \int_E \phi(\zeta) \langle f(\zeta,\,\cdot\,), \Fc_F\psi\rangle \,\dd \zeta=\int_E \phi(\zeta) \langle \Fc_F(f(\zeta,\,\cdot\,)), \psi\rangle \,\dd \zeta
	\]
	for every $\phi\in \Sc(E)$ and for every $\psi\in \Sc(F)$, and since the mapping 
	\[
	\zeta\mapsto \langle \Fc_F(f(\zeta,\,\cdot\,)), \psi\rangle =\langle f(\zeta,\,\cdot\,), \Fc_F\psi\rangle 
	\]
	is continuous, the assertion follows.
\end{proof}

\begin{teo}\label{teo:2}
	Let $K$ be a compact convex subset of $F'$. Then, for every $f\in \Hol(E\times F_\C)$ such that there are $N,C>0$ such that
	\[
	\abs{f(\zeta,z)}\meg C(1+\abs{\zeta}^2+\abs{z})^N \ee^{H_{K}(\rho(\zeta,z))}
	\]
	for every $(\zeta,z)\in E\times F_\C$, one has $f_0\in \Oc_{K}(\Nc)$.
	
	Conversely,  for every $u\in \widetilde \Oc_K(\Nc)$ there is a unique  $f\in \Hol(E\times F_\C)$ such that $f_0\cdot \Hc^{2n +m}=u$. In addition, $f$ satisfies the preceding estimates and
	\[
	f(\zeta,z)=\frac{1}{(2\pi)^m} \int_K \Fc_F [f_0(\zeta,\,\cdot\,)](\lambda)\ee^{i\langle \lambda_\C, z\rangle}\,\dd \lambda
	\]
	for every $(\zeta,z)\in E\times F_\C$.
\end{teo}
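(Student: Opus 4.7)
Given $f \in \Hol(E \times F_\C)$ with the stated bound, I would verify that $f_0 \in \Oc_K(\Nc)$ by checking the three defining conditions. Since $\rho \equiv 0$ on $\cM$ one has $H_K(0)=0$, so $\abs{f_0(\zeta,x)}\leq C(1+\abs{\zeta}^2+\abs{x}+\abs{\Phi(\zeta)})^N$ gives polynomial growth; Cauchy estimates on unit polydisks in $E\times F_\C$ propagate the same type of bound to every partial derivative of $f$, and hence to $X f_0=(\Xc f)_0$ for any left-invariant $X$ on $\Nc$, whose canonical extension $\Xc$ has polynomial coefficients. Holomorphy of $f$ combined with the remark $\overline{\Zc_v} f=\overline{\partial_{E,v}} f$ yields $\overline{Z_v} f_0=(\overline{\Zc_v} f)_0=0$ for every $v\in E$. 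For fixed $\zeta$, the function $w\mapsto f(\zeta,w+i\Phi(\zeta))$ is entire on $F_\C$, agrees with $f_0(\zeta,\,\cdot\,)$ on $F$, and satisfies the single-variable bound $\abs{f(\zeta,w+i\Phi(\zeta))}\leq C_\zeta(1+\abs{w})^N \ee^{H_K(\Im w)}$; the classical Paley--Wiener--Schwartz theorem then forces $\supp \Fc_F[f_0(\zeta,\,\cdot\,)]\subseteq K$.

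\textbf{Converse: construction and joint holomorphy.} Given $u\in \widetilde\Oc_K(\Nc)$, the CR disintegration recalled in the excerpt provides $v\in C^\infty(E;\Dc'(F))$ with $v(\zeta)=u(\zeta,\,\cdot\,)$. The support condition on $(I\otimes\Fc_F) u$, combined with smoothness in $\zeta$, forces $\supp \Fc_F v(\zeta)\subseteq K$ for every $\zeta$. The classical Paley--Wiener--Schwartz theorem provides, for each $\zeta$, an entire extension $V(\zeta,\,\cdot\,)$ of $v(\zeta)$ to $F_\C$; I would then set
\[
f(\zeta,z)\coloneqq V(\zeta, z- i\Phi(\zeta)),
\]
so that $f_0=v=u$ and the claimed integral representation follows from Fourier inversion. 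Holomorphy in $z$ is immediate. For holomorphy in $\zeta$, the CR equation $\overline{Z_v} u=0$ reads $\overline{\partial_{E,v}} v(\zeta)=i\partial_{F,\Phi(\zeta,v)} v(\zeta)$ in $\Dc'(F)$; Fourier transforming in $x$ and extending by analytic continuation propagates this to $\overline{\partial_{E,v}} V=i\partial_{F_\C,\Phi(\zeta,v)} V$ on all of $E\times F_\C$. A chain-rule computation using $\overline{\partial_{E,v}}\Phi(\zeta)=\Phi(\zeta,v)$ then yields $\overline{\partial_{E,v}} f=0$, so $f$ is holomorphic.

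\textbf{Uniform bounds and uniqueness.} The principal obstacle is upgrading the fiberwise bounds on $V(\zeta,\,\cdot\,)$ to estimates that are uniform and polynomial in $\zeta$. Writing
\[
V(\zeta,z)=\frac{1}{(2\pi)^m}\big\langle \Fc_F v(\zeta), \chi\cdot \ee^{i\langle\,\cdot\,_\C, z\rangle}\big\rangle
\]
with $\chi\in C^\infty_c(F')$ equal to $1$ on a neighbourhood of $K$, and translating the continuity estimates of $u\in \Sc'(\Nc)$ into bounds on the pairing, one obtains $\abs{V(\zeta,z)}\leq C(1+\abs{\zeta})^{N_1}(1+\abs{z})^{N_2}\ee^{H_K(\Im z)}$. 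Replacing $z$ by $z-i\Phi(\zeta)$ and using $\abs{\Phi(\zeta)}\leq c\abs{\zeta}^2$ yields the claimed bound on $f$. Uniqueness follows from the fact that, for each fixed $\zeta$, the set $F+i\Phi(\zeta)\subset F_\C$ is a maximally real affine subspace, so any entire function on $F_\C$ vanishing there must vanish identically; thus $f$ is determined by $f_0$.
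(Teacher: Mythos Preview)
Your forward direction and overall strategy (reduce to the classical Paley--Wiener--Schwartz theorem by slicing in $F$) match the paper. For the converse, your fiberwise construction of $V(\zeta,\cdot)$ followed by the chain-rule verification of $\overline{\partial_{E,v}}f=0$ is a legitimate alternative to the paper's route, which instead builds $\widetilde f$ as a distribution on $E\times F_\C$ via the Schwartz kernel theorem and checks the $\overline\partial$-equations distributionally before invoking ellipticity.

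There is, however, a genuine gap in your uniform-bound step. The formula
\[
V(\zeta,z)=\frac{1}{(2\pi)^m}\big\langle \Fc_F v(\zeta),\ \chi\,\ee^{i\langle\,\cdot\,_\C, z\rangle}\big\rangle
\]
is a pairing with the \emph{fiber} $v(\zeta)$, not with $u$. The disintegration $v\in C^\infty(E;\Dc'(F))$ only relates $u$ to \emph{integrals} $\int_E\phi(\zeta)\langle v(\zeta),\psi\rangle\,\dd\zeta$; it gives no a~priori control on the order of $\Fc_F v(\zeta)\in\Ec'(F')$ as a function of $\zeta$, nor polynomial-in-$\zeta$ bounds on its seminorms. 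So ``translating the continuity estimates of $u\in\Sc'(\Nc)$ into bounds on the pairing'' is not justified as written.

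The fix---and this is precisely the device the paper uses---is to exploit the holomorphy in $\zeta$ you have already established. Choosing a radial $\eta\in C^\infty_c(E)$ with $\int\eta=1$, the mean-value property gives
\[
f(\zeta,z)=\int_E \eta(\zeta'-\zeta)\,f(\zeta',z)\,\dd\zeta'
=\frac{1}{(2\pi)^m}\big\langle (I\otimes\Fc_F)u,\ (\zeta',\lambda)\mapsto \eta(\zeta'-\zeta)\,\ee^{i\langle\lambda_\C,\,z-i\Phi(\zeta')\rangle}\big\rangle,
\]
which is now a pairing with the tempered distribution $(I\otimes\Fc_F)u$ itself (cut off near $E\times K$). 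The Schwartz seminorms of this test function are controlled by powers of $1+\abs{\zeta}$ and $1+\abs{z}$, yielding the desired estimate. Without this averaging step (or an equivalent argument producing uniform-in-$\zeta$ control on the fibers), the bound you claim does not follow.
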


In particular, $\widetilde \Oc_K(\Nc)=\Oc_K(\Nc)\cdot \Hc^{2n+m}$.

\begin{proof}
	Take $f\in \Hol(E\times F_\C)$ such that there are $N,C>0$ such that
	\[
	\abs{f(\zeta,z)}\meg C(1+\abs{\zeta}^2+\abs{z})^N \ee^{H_{K}(\rho(\zeta,z))}
	\]
	for every $(\zeta,z)\in E\times F_\C$. 
	Let $X$ be a left-invariant differential operator on $\Nc$, and denote by $\Xc$ its canonical extension to $E\times F_\C$. Observe that, by Cauchy estimates and the invariance of $\Xc$, there is a constant $C'>0$ such that
	\[
	\begin{split}
		\abs{(X f_0)(\zeta,x)}&=\abs{\Xc [f((\zeta,x+i \Phi(\zeta))\,\cdot\,)](0,0)}\\
		&\meg C' \sup_{\abs{(\zeta',z')}\meg 1} \abs{f((\zeta,x+i\Phi(\zeta))\cdot (\zeta',z'))}\\
		&\meg C C'\sup_{\abs{(\zeta',z')}\meg 1} (1+\abs{\zeta+\zeta'}^2+\abs{z'+x+i\Phi(\zeta)+2 i\Phi(\zeta',\zeta)})^N \ee^{H_{K}(\rho(\zeta',z'))}
	\end{split}
	\]
	for   every $(\zeta,x)\in \Nc$. Therefore, there is a constant $C''>0$ such that
	\[
	\abs{(X f_0)(\zeta,x)}\meg C''(1+\abs{\zeta}^2+\abs{x})^N 
	\]
	for every $(\zeta,x)\in \Nc$.

	Now, fix $\zeta\in E$, and observe that $f_0(\zeta,\,\cdot\,)$ is the restriction of $f^{(\zeta)}\coloneqq f(\zeta,\,\cdot\,+i\Phi(\zeta))$ to $F$. In addition,
	\[
	\begin{split}
		\abs{f^{(\zeta)}(z)}&\meg C\abs{(1+\abs{\zeta}^2+\abs{z+i\Phi(\zeta)})^N} \ee^{H_{K}(\Im z)}\\
		&\meg C (1+\abs{\zeta}^2+\abs{\Phi(\zeta)})^N(1+\abs{z})^N \ee^{H_{K}(\Im z)}
	\end{split}
	\]
	for every $z\in F_\C$. Therefore, the usual Paley--Wiener--Schwartz theorem (cf., e.g.,~\cite[Theorem 7.3.1]{Hormander}) shows that $\Fc_F(f_0(\zeta,\,\cdot\,))$ is (a distribution or order at most $N$) supported in $K$. Further, since $f$ is holomorphic, it is clear that $f_0$ is CR. Thus, $f_0\in \Oc_K(\Nc)$.
	
	Conversely, take $u\in \widetilde \Oc_K(\Nc)$. For every $\phi\in \Sc(E)$ and for every $v\in \Sc'(\Nc)$, define $v_\phi\in \Sc'(F)$ so that $\langle v_\phi, \psi\rangle\coloneqq \langle v, \phi\otimes \psi\rangle$, and observe that, by assumption, $\Fc_F u_\phi$ is supported in $K$ for every $\phi \in \Sc(E)$.
	Observe that the bilinear mapping\footnote{We denote by $\Dc$ the space $C^\infty_c$ endowed with its usual inductive limit topology.}
	\[
	\Dc(E)\times \Dc(F_\C) \ni (\phi,\psi )\mapsto  \frac{1}{(2 \pi)^m}\int_{F_\C}\langle (I\otimes \Fc_F) u, (\zeta, \lambda)\mapsto \ee^{i \langle\lambda, z-i \Phi(\zeta) \rangle }\phi(\zeta)\rangle \psi(z)\,\dd z\in \C 
	\]
	is separately continuous, so that it induces a distribution $\widetilde f$ on $E\times F_\C$ by Schwartz's kernel theorem (cf.,~\cite[Theorem 2]{Schwartz} or~\cite[Theorem 5.2.1]{Hormander}). In addition, it is clear that $\overline{\partial_{F_\C,w}}\widetilde f=0$ for every $w\in F_\C$, so that $\widetilde f_\phi$ coincides with a holomorphic function on $F_\C$ for every $\phi \in \Dc(E)$. 
	Furthermore,
	\[
	\begin{split}
		[\overline{\partial_{E,v}} \widetilde f]_\phi(z)&=- \frac{1}{(2 \pi)^m}\big\langle (I\otimes \Fc_F) u, (\zeta, \lambda)\mapsto \ee^{i \langle\lambda, z-i \Phi(\zeta) \rangle }\overline{\partial_{E,v}}\phi(\zeta)\big\rangle \\
			&=- \frac{1}{(2 \pi)^m}\big\langle (I\otimes \Fc_F) u,  (\zeta, \lambda)\mapsto (\overline{\partial_{E,v}}-\langle \lambda, \Phi(\zeta,v)\rangle) [\ee^{i \langle\lambda, z-i \Phi(\,\cdot\,) \rangle }\phi](\zeta)\big\rangle\\
			&=\frac{1}{(2 \pi)^m}\big\langle (I\otimes \Fc_F) (\overline{\partial_{E,v}} +i\Phi(\pr_E,v)\partial_F) u,  (\zeta, \lambda)\mapsto \ee^{i \langle\lambda, z-i \Phi(\zeta) \rangle }\phi(\zeta)\big\rangle\\
			&=\frac{1}{(2 \pi)^m}\big\langle (I\otimes \Fc_F) \overline{Z_v}u,  (\zeta, \lambda)\mapsto \ee^{i \langle\lambda, z-i \Phi(\zeta) \rangle }\phi(\zeta)\big\rangle=0
	\end{split}
	\]
	for every $v\in E$, for every $\phi \in \Dc(E)$, and for every $z\in F_\C$. Therefore, there is a unique $f\in \Hol(E\times F_\C)$ such that $\widetilde f=f\cdot \Hc^{2n+2m}$. In particular, if $\eta\in \Dc(E)$ is radial and has integral $1$, then
	\[
	f(\zeta,z)= \widetilde f_{\eta(\,\cdot\,-\zeta)}(z)= \frac{1}{(2 \pi)^m}\big\langle (I\otimes \Fc_F) u, (\zeta', \lambda)\mapsto \ee^{i \langle\lambda, z-i \Phi(\zeta') \rangle }\eta(\zeta'-\zeta)\big\rangle
	\]
	for every $(\zeta,z)\in E\times F_\C$. The same assertion holds also with $\eta_j\coloneqq2^{2 nj}\eta(2^j\,\cdot\,)$ for every $j\in \N$. In particular,
	\[
	\begin{split}
	\int_E \phi(\zeta) f(\zeta,z+i \Phi(\zeta))\,\dd \zeta&= \frac{1}{(2 \pi)^m}\int_E \phi(\zeta)\big\langle (I\otimes \Fc_F) u, (\zeta', \lambda)\mapsto \ee^{i \langle\lambda, z \rangle }\eta_j(\zeta'-\zeta)\big\rangle  \,\dd \zeta\\
		&= \frac{1}{(2 \pi)^m} \big\langle (I\otimes \Fc_F) u, (\phi*\eta_j)\otimes \ee^{i \langle\,\cdot\,, z \rangle }\big\rangle
	\end{split}
	\]
	 for every $\phi \in \Dc(E)$, for every $z\in F_\C$, and for every $j\in \N$. Passing to the limit for $j\to \infty$, we then infer that 
	 \[
	 \int_E \phi(\zeta) f(\zeta,z+i \Phi(\zeta))\,\dd \zeta=  \frac{1}{(2 \pi)^m} \big\langle (I\otimes \Fc_F) u, \phi\otimes \ee^{i \langle\,\cdot\,, z \rangle }\big\rangle,
	 \]
	 so that
	 \[
	 (I\otimes \Fc_F)u= [(\zeta,\lambda) \mapsto \Fc_F(f_0(\zeta,\,\cdot\,))(\lambda)]\cdot \Hc^{2n+m},
	 \]
	 whence $u=f_0\cdot \Hc^{2n+m}$.
	 
	 Next, observe that there is $N'\in \N$ such that
	 \[
	 \abs{\langle (I\otimes \Fc_F) u, \phi\rangle}\meg \sup_{(\zeta,x)\in \Nc} (1+\abs{\zeta}^2+\abs{x})^{N'} \sup_{j=0}^{N'} \abs{\phi^{(j)}(\zeta,x)}
	 \]
	 for every $\phi \in \Sc(\Nc)$. Then, it is readily verified that there is a constant $C'''>0$ such that
	 \[
	 \abs{f_0(\zeta,x)}\meg C''' (1+\abs{\zeta}^2+ \abs{x})^{N'}
	 \]
	 for every $(\zeta,x)\in \Nc$. Therefore, the functions
	 \[
	 (1+\abs{\zeta}^2)^{-N'} (1+\abs{\,\cdot\,})^{-(N'+m+1)} f_0(\zeta,\,\cdot\,),
	 \]
	 as $\zeta$ runs through $E$, are uniformly bounded in $L^1(F)$, so that the functions
	 \[
	 (1+\abs{\zeta}^2)^{-N'} \Fc_F(f_0(\zeta,\,\cdot\,)),
	 \]
	 as $\zeta$ runs through $E$, are uniformly bounded in the space $\Ec'^{N'+m+1}(\Nc)$ of distributions with compact support and order at most $N'+m+1$, and are supported in $K$.
	 Hence, the usual Paley--Wiener--Schwartz theorem (cf., e.g.,~\cite[Theorem 7.3.1]{Hormander} and its proof) shows that  there is a constant $C^{(4)}>0$ such that
	 \[
	 \begin{split}
	 	\abs{f(\zeta,z)}&\meg C^{(4)}(1+\abs{\zeta}^2)^{N'}(1+\abs{z-i\Phi(\zeta) })^{N'+m+1}\ee^{H_{K}(\rho(\zeta,z))}
	 \end{split}
	 \]
	 for every $(\zeta,z)\in E\times F_\C$. This leads to the conclusion. 
\end{proof}

\section{Paley--Wiener Theorems for Spaces of Schwartz Functions}\label{sec:4}

In this section, we prove a Paley--Wiener--Schwartz theorem for CR Schwartz functions.

\begin{deff}
	Let $K$ be a closed subset of $\overline{\Lambda_+}$. We define $	\Sc_{K}(\Nc)$ as the space of $ \phi\in \Sc(\Nc)$   such that $\pi_\lambda(\phi)=\chi_K(\lambda)\pi_\lambda(\phi) P_{\lambda,0}$ for every $\lambda\in F'$. 
\end{deff}

If $K$ is compact, then $\Sc_K(\Nc)=\Sc(\Nc)\cap \Oc_K(\Nc)$, thanks to Proposition~\ref{prop:5}.

\begin{teo}\label{teo:1}
	Let $K$ be a  compact convex subset of $F'$. Then, for every $f\in \Hol(E\times F_\C)$ such that for every  $N\in\N$ there is a constant $C_N>0$ such that
	\[
	\abs{f(\zeta,z)}\meg \frac{C_N}{(1+\abs{z})^N }\ee^{H_{K}(\rho(\zeta,z))} 
	\]
	for every $(\zeta,z)\in E\times F_\C$, one has $f_0\in  \Sc_{K\cap \overline{\Lambda_+}}(\Nc)$.
	
	Conversely, if $K\subseteq \overline{\Lambda_+}$, then for every $\phi\in  \Sc_{K}(\Nc)$  there is a unique $f \in \Hol(E\times F_\C)$ such that $f_0=\phi$.
	In addition, 
	\[
	f(\zeta,z)=\frac{2^{n-m}}{\pi^{n+m}}\int_{K} \tr(\pi_\lambda(\phi) \pi_\lambda(\zeta,0)^*) \ee^{\langle \lambda_\C, i z+\Phi(\zeta)\rangle} \abs{\Pfaff(\lambda)}\,\dd \lambda
	\]
	for every $(\zeta,z)\in E\times F_\C$, and for every $N\in \N$ there is $C_N>0$ such that
	\[
		\abs{f(\zeta,z)}\meg \frac{C_N}{(1+\abs{\zeta}^2+\abs{z})^N }\ee^{H_{K}(\rho(\zeta,z))} 
	\]
	for every $(\zeta,z)\in E\times F_\C$.
\end{teo}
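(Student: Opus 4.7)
The theorem has two implications: (i) any $f \in \Hol(E \times F_\C)$ satisfying the Schwartz-type bound restricts to $f_0 \in \Sc_{K \cap \overline{\Lambda_+}}(\Nc)$; (ii) every $\phi \in \Sc_K(\Nc)$ with $K \subseteq \overline{\Lambda_+}$ admits a unique holomorphic extension to $E \times F_\C$ given by the displayed integral formula and satisfying the sharpened bound.

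For the forward direction, Cauchy estimates on small polydiscs around $(\zeta, x + i\Phi(\zeta))$ together with the invariance of the canonical extension show that every left-invariant derivative satisfies $|Xf_0(\zeta, x)| \leq C_{X, N}(1 + |x|)^{-N}$ for every $N$, and the holomorphy of $f$ gives $\overline{Z_v} f_0 = 0$. For each fixed $\zeta$, the classical Paley--Wiener--Schwartz theorem applied to $z \mapsto f(\zeta, z + i\Phi(\zeta))$ on $F_\C$ shows that $\Fc_F(f_0(\zeta, \cdot))$ is supported in $K$, and Proposition~\ref{prop:5}, combined with the CR condition, refines this to $K \cap \overline{\Lambda_+}$ (so $f_0 \equiv 0$ when $\Lambda_+ = \emptyset$). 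Schwartz decay in $\zeta$ follows by directly integrating the pointwise estimate: $|\Fc_F(f_0(\zeta, \cdot))(\lambda)| \leq \int_F |f(\zeta, x + i\Phi(\zeta))|\,\dd x \leq C'_N (1 + |\Phi(\zeta)|)^{-N + m + 1}$ uniformly in $\lambda$. Since $\Lambda_+ \neq \emptyset$ forces $\Phi$ non-degenerate (Proposition~\ref{prop:7}) and hence $|\Phi(\zeta)| \gtrsim |\zeta|^2$, Fourier inversion over the compact $K$ yields polynomial decay of arbitrary order in $\zeta$; applying this to every $X f_0$ concludes $f_0 \in \Sc(\Nc)$, i.e., $f_0 \in \Sc_{K \cap \overline{\Lambda_+}}(\Nc)$.

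For the reverse direction, given $\phi \in \Sc_K(\Nc)$, I define $f$ by the displayed formula. The algebraic identity $\Phi_\lambda(\zeta) = \langle \lambda, \Phi(\zeta)\rangle$ for $\lambda \in \Pc$---a direct computation from the definition of $\Phi_\lambda$ via $J'_\lambda$ using that $\Phi(\zeta, \zeta) \in F$ is real---causes the non-holomorphic factor $e^{-\Phi_\lambda(\zeta)}$ in $\overline{\pi_\lambda(\zeta, 0) e_{\lambda, 0}}$ to cancel against the external $e^{\langle \lambda, \Phi(\zeta)\rangle}$. Applying the Fock-space reproducing kernel on $\Hs_\lambda$, the integrand collapses to $c_\lambda\, \psi_\lambda(\zeta)\, e^{\langle \lambda_\C, iz\rangle}\, |\Pfaff(\lambda)|$ with $\psi_\lambda \coloneqq \pi_\lambda(\phi) e_{\lambda, 0}$ holomorphic in $\zeta$; hence $f \in \Hol(E \times F_\C)$. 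Restricting the formula to $\cM$ (i.e., $z = x + i \Phi(\zeta)$) recovers the Plancherel/Fourier inversion of $\phi$ from Proposition~\ref{prop:2}, using that $\Rc_\lambda = \{0\}$ for $\lambda \in \Lambda_+$ so only $\tau = 0$ contributes and $\pi_\lambda(\phi) = \pi_\lambda(\phi) P_{\lambda, 0}$ is rank one; this proves $f_0 = \phi$. Uniqueness follows because $\cM$ is maximally totally real in $E \times F_\C$.

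The main technical obstacle is the Schwartz-type bound in the reverse direction. For each fixed $\zeta$, the classical one-variable Paley--Wiener--Schwartz theorem applied to the Schwartz function $x \mapsto \phi(\zeta, x)$ on $F$ (whose Fourier transform is supported in $K$) yields $|f(\zeta, z)| \leq \widetilde C_N(\zeta)\, (1 + |z - i\Phi(\zeta)|)^{-N}\, e^{H_K(\rho(\zeta, z))}$, where $\widetilde C_N(\zeta)$ is controlled by Schwartz seminorms of $\phi(\zeta, \cdot)$. Joint Schwartzness of $\phi \in \Sc(\Nc)$ then forces $\widetilde C_N(\zeta) \leq C_{N, M}(1 + |\zeta|)^{-M}$ for every $M$, so $|f(\zeta, z)| \leq C_{N, M}(1 + |\zeta|)^{-M}(1 + |z - i\Phi(\zeta)|)^{-N}\, e^{H_K(\rho)}$. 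A case analysis on whether $|z|$ is small or large compared to $|\Phi(\zeta)| \lesssim |\zeta|^2$, taking $M = 2N$, then converts this into the claimed bound $C_N(1 + |\zeta|^2 + |z|)^{-N}\, e^{H_K(\rho)}$; the careful bookkeeping of the dependence of PWS constants on Schwartz seminorms (ultimately controlled by the Rockland functional calculus of Proposition~\ref{prop:8} and Hulanicki's theorem) constitutes the bulk of the technical work.
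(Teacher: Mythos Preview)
Your overall strategy matches the paper's, but there is one ordering gap in the forward direction and a genuinely different (though valid) route in the converse.

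\textbf{Forward direction: an ordering gap.} You invoke Proposition~\ref{prop:5} to refine the support to $K\cap\overline{\Lambda_+}$ \emph{before} you have shown $f_0\in L^2(\Nc)$, but Proposition~\ref{prop:5} (and Proposition~\ref{prop:3} behind it) is stated for $L^2$ functions. In your scheme $L^2$-membership only comes at the end, after the $\zeta$-decay argument, which in turn relies on the non-degeneracy of $\Phi$ that you extract from $\Lambda_+\neq\emptyset$ --- itself a consequence of Proposition~\ref{prop:5}. This is circular. The paper avoids the circularity by observing that the Cauchy estimate already delivers the joint decay: on $\cM$ one has $z=x+i\Phi(\zeta)$, so $(1+\abs{z})^{-N}=(1+\abs{x+i\Phi(\zeta)})^{-N}$, and (using that $\zeta\mapsto\abs{\Phi(\zeta)}$ is proper and homogeneous of degree $2$) this dominates $(1+\abs{x}+\abs{\zeta}^2)^{-N}$ directly. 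Hence $f_0\in\Sc(\Nc)$ follows from the Cauchy step alone, and only then is Proposition~\ref{prop:5} applied. Your Fourier-side detour for $\zeta$-decay is therefore unnecessary.

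\textbf{Converse: a different but legitimate route.} The paper does not build $f$ from the trace formula and then argue holomorphy via the Fock reproducing kernel. Instead it invokes Theorem~\ref{teo:2} (the tempered Paley--Wiener--Schwartz theorem already proved) to get existence, uniqueness, and the scalar formula $f(\zeta,z)=(2\pi)^{-m}\int_K\Fc_F[\phi(\zeta,\cdot)](\lambda)\,\ee^{i\langle\lambda_\C,z\rangle}\,\dd\lambda$; the trace formula is then \emph{derived} from this via $\pi_\lambda(f_h)=\ee^{-\langle\lambda,h\rangle}\pi_\lambda(\phi)$ and Proposition~\ref{prop:2}. Your cancellation argument (using $\Phi_\lambda(\zeta)=\langle\lambda,\Phi(\zeta)\rangle$ for $\lambda\in\Pc$ and the Fock reproducing property) is correct and more intrinsic, but it is longer and needs care about the $\lambda$-dependence of $\psi_\lambda=\pi_\lambda(\phi)e_{\lambda,0}$ to justify differentiating under the integral. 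The paper's route is shorter precisely because Theorem~\ref{teo:2} has already absorbed that work. For the estimates both you and the paper do the same thing: view $\zeta\mapsto\phi(\zeta,\cdot)$ as an element of $\Sc(E;\Sc_K(F))$, apply the classical one-variable theorem slice by slice to get $(1+\abs{\zeta}^2)^{-N}(1+\abs{z-i\Phi(\zeta)})^{-N}\ee^{H_K(\rho)}$, and then convert to $(1+\abs{\zeta}^2+\abs{z})^{-N}$ by an elementary inequality.

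\textbf{Two small corrections.} First, $\cM$ is not ``maximally totally real'': it has CR dimension $n$. The uniqueness you need is still true and follows by slicing (for each fixed $\zeta$ the set $\{x+i\Phi(\zeta):x\in F\}$ is maximal totally real in $F_\C$), or simply by citing Theorem~\ref{teo:2}. Second, the reference to the Rockland calculus and Hulanicki's theorem is not needed here; neither your bound argument nor the paper's uses it for Theorem~\ref{teo:1}.
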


\begin{proof}
	Take $f\in \Hol(E\times F_\C)$ such that for every $N\in\N$ there is a constant $C_N>0$ such that
	\[
	\abs{f(\zeta,z)}\meg \frac{C_N}{(1+\abs{z})^N }\ee^{H_{K}(\rho(\zeta,z))} 
	\]
	for every $(\zeta,z)\in F$. Let $X$ be a left-invariant differential operator   on $\Nc$, and denote by $\Xc$ its canonical extension to $E\times F_\C$. Observe that, by Cauchy estimates and the invariance of $\Xc$, for every $R>0$ there is a constant $C'_R>0$ such that
	\[
	\begin{split}
		\abs{(X f_0)(\zeta,x)}&=\abs{\Xc [f((\zeta,x+i \Phi(\zeta))\,\cdot\,  )](0,0)}\\
		&\meg C'_R \sup_{\abs{(\zeta',z')}\meg 	R} \abs{f((\zeta,x+i\Phi(\zeta))\cdot (\zeta',z'))}\\
		&\meg C'_R\sup_{\abs{(\zeta',z')}\meg R} \frac{C_N}{(1+\abs{z'+x+i\Phi(\zeta)+2 i\Phi(\zeta',\zeta)})^N} \ee^{H_{K}(\rho(\zeta',z'))}
	\end{split}
	\]
	for   every $(\zeta,x)\in \Nc$. If we take $R$ sufficiently small, then we may find a constant $C''_N>0$ such that
	\[
	\abs{(X f_0)(\zeta,x)}\meg \frac{C''_N}{(1+\abs{x}+\abs{\zeta}^2)^N }
	\]
	for every $(\zeta,x)\in \Nc$ and for every $N\in \N$.\footnote{Here we use the fact that the mapping $\zeta\mapsto \abs{\Phi(\zeta)}$ is proper and homogeneous of degree $2$.} The arbitrariness of $N$ and $X$ then implies that $f_0\in \Sc(\Nc)$. In addition, Proposition~\ref{prop:5} shows that $\Fc_F(f_0(\zeta,\,\cdot\,))$ is supported in $\overline{\Lambda_+}$ for almost every $\zeta\in E$, hence for \emph{every} $\zeta\in E$, by continuity.
	Now, Theorem~\ref{teo:2} implies that $f_0\in \Oc_{K}(\Nc)$, so that $f_0\in \Sc_{K\cap \overline{\Lambda_+}}(\Nc)$.
	
	Conversely, assume that $K\subseteq \overline{\Lambda_+}$, and take $\phi\in  \Sc_{K}(\Nc)$. Observe that $\phi=0$ is $K$ has an empty interior, so that we may assume that $\Lambda_+\neq \emptyset$, so that $d_\lambda=0$ for almost every $\lambda\in F'$. Then, by Theorem~\ref{teo:2}, there is a unique $f\in \Hol(E\times F_\C)$ such that $\phi=f_0$. In addition, 
	\[
	f(\zeta,z)=\frac{1}{(2\pi)^m} \int_K \Fc_F [\phi(\zeta,\,\cdot\,)](\lambda) \ee^{i \langle \lambda_\C,z\rangle}\,\dd \lambda
	\]
	for every $(\zeta,z)\in E\times F_\C$.  This implies that $\pi_\lambda(f_h)= \ee^{-\langle \lambda,h\rangle} \pi_\lambda(\phi)$ for every $h\in F$ and for every $\lambda\in F'$, so that Proposition~\ref{prop:2} implies that
	\[
	\begin{split}
	f (\zeta,z)&= \frac{2^{n-m}}{\pi^{n+m}} \int_K \tr(\pi_\lambda(\phi) \pi_\lambda(\zeta,\Re z)^*) \ee^{-\langle \lambda, \rho(\zeta,z)\rangle} \abs{\Pfaff(\lambda)}\,\dd \lambda\\
		&= \frac{2^{n-m}}{\pi^{n+m}} \int_K \tr(\pi_\lambda(\phi) \pi_\lambda(\zeta,0)^*) \ee^{\langle \lambda_\C, i z+\Phi(\zeta) \rangle} \abs{\Pfaff(\lambda)}\,\dd \lambda.
	\end{split}
	\]
	Now, observe that the mapping $\zeta\mapsto \phi(\zeta,\,\cdot\,)$ belongs to $\Sc(E; \Sc_{K}(F))$, so that the family $((1+\abs{\zeta}^2)^N f(\zeta,\,\cdot\,))_{\zeta\in E}$ is bounded in $\Sc_K(F)$ for every $N\in \N$.	In particular, the usual Paley--Wiener--Schwartz theorem, cf., e.g.,~\cite[Theorem 7.3.1]{Hormander} (and its proof) shows that for every $N\in \N$ there is a constant $C_N'''>0$ such that
	\[
	\abs{f(\zeta,z+i \Phi(\zeta))}\meg \frac{C'''_N}{(1+\abs{\zeta}^2)^{N} (1+\abs{z})^N} \ee^{H_{K}(\Im z)}
	\]
	for every $(\zeta,z)\in E\times F_\C$. To conclude, observe that there is $C^{(4)}\Meg 1$ such that $\abs{\Phi(\zeta)}\meg C^{(4)}\abs{\zeta}^2$ for every $\zeta\in E$, so that
	\[
	\begin{split}
		(1+\abs{\zeta}^2)^{N} (1+\abs{z-i \Phi(\zeta)})^N&\Meg (1+\abs{\zeta}^2+(2C^{(4)})^{-1}\abs{z-i \Phi(\zeta)})^N\\
			&\Meg (1+\abs{\zeta}^2/2+(2C^{(4)})^{-1}\abs{z})^N\\
			&\Meg (2 C^{(4)})^{-N} (1+\abs{\zeta}^2+\abs{z})^N
	\end{split}
	\]
	for every $(\zeta,z)\in E\times F_\C$.
\end{proof}

\section{Spaces of Schwartz Functions}\label{sec:5}

We now introduce a smaller auxialiry space of Schwartz functions, $\widetilde \Sc_K(\Nc)$, which can be more easily understood by means of the non-commutative Fourier transform, and then provide further insight into $\Sc_K(\Nc)$. In particular, we shall prove that $\Sc_K(\Nc)$ is the closure of the union of the $\Sc_H(\Nc)$, as $H$ runs through the set of compact subsets of the interior of $K$, a fact which is clear when $E=\Set{0}$, but more delicate in the general case.
In addition, we shall provide more insight into the structure of $\Oc_K(\Nc)$.

Recall that $W=\Set{\lambda\in F'\colon d_\lambda>d}$, with $d=\min_\lambda d_\lambda$.

\begin{deff}
	Let $K$ be a closed subset of $\overline{\Lambda_+}$.
	Then, we define
	\[
	\widetilde \Sc_K(\Nc)\coloneqq \Set{\phi \in \Sc(\Nc)\colon\forall \lambda\in F'\setminus W \:\: \pi_\lambda(\phi)=\chi_K(\lambda)P_{\lambda,0} \pi_\lambda(\phi) P_{\lambda,0}}.
	\]
	In addition, we define
	\[
	\Fc_\Nc\colon \widetilde \Sc_{\overline{\Lambda_+}}(\Nc)\ni \varphi\mapsto [\lambda \mapsto \tr(\pi_\lambda(\varphi) ) ] .
	\]
	Finally, we define $\Sc(F',K)$ as the space of elements of $\Sc(F')$ supported in $K$, endowed with the topology induced by $\Sc(F')$.
\end{deff}

\begin{prop}\label{prop:40}
	Let $K$ be a closed subset of $\overline{\Lambda_+}$. Then, the following hold:
	\begin{enumerate}
		\item[\textnormal{(1)}] $\Fc_\Nc$ induces an isomorphism of $\Sc_{K}(\Nc)$ onto $\Sc(F',K)$;
		
		\item[\textnormal{(2)}] for every $\psi\in \Sc(F',K)$ and for every $(\zeta,x)\in \Nc$,
		\[
		(\Fc_\Nc^{-1} \psi)(\zeta,x)=\frac{2^{n-m} }{\pi^{n+m}}\int_{K} \psi(\lambda)\abs{\Pfaff(\lambda)} \ee^{i\langle\lambda,x\rangle-\langle \lambda, \Phi(\zeta)\rangle}\,\dd \lambda;
		\]
		
		\item[\textnormal{(3)}] if $\varphi_1,\varphi_2\in \widetilde\Sc_{K}(\Nc)$, then $\varphi_1*\varphi_2\in \widetilde \Sc_{K}(\Nc)$ and 
		\[
		\Fc_\Nc(\varphi_1*\varphi_2)=(\Fc_\Nc \varphi_1)(\Fc_\Nc \varphi_2);
		\]
		
		\item[\textnormal{(4)}]  if $\phi_1,\phi_2\in \widetilde \Sc_{\overline{\Lambda_+}}(\Nc)$, then $\phi_1\phi_2\in \widetilde \Sc_{\overline{\Lambda_+}}(\Nc)$ and
		 \[
		 \Fc_\Nc(\phi_1 \phi_2)=\frac{2^{n-m}}{\pi^{n+m}}\frac{[\Fc_\Nc(\phi_1) \abs{\Pfaff(\,\cdot\,)}]*[\Fc_\Nc(\phi_2) \abs{\Pfaff(\,\cdot\,)}]}{\abs{\Pfaff(\,\cdot\,)}}.
		 \]
	\end{enumerate} 
\end{prop}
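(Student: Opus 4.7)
The plan is to prove (1) and (2) together by constructing the inverse of $\Fc_\Nc$ via the explicit formula in (2), and then to deduce (3) and (4) by direct computation using the algebraic properties of $\pi_\lambda$. Given $\psi\in\Sc(F',K)$, I define $\phi$ by the right-hand side of (2). Schwartzness of $\phi$ in the $x$-variable follows from Fourier inversion on $F$ applied to $\psi\,\lvert\Pfaff(\,\cdot\,)\rvert$; Schwartzness in $\zeta$ follows from the Gaussian factor $\ee^{-\langle\lambda,\Phi(\zeta)\rangle}$, which, for $\lambda$ in the interior of $\Pc$, is strictly positive definite on $\Rc^\perp$ (apply differentiation under the integral and dominated convergence). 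The case of nontrivial $\Rc$ forces $\phi\equiv 0$ consistently with the statement, since the integrand is constant along the $\Rc$-directions.

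To show $\phi\in\Sc_K(\Nc)$ and that $\Fc_\Nc\phi=\psi$, I compute $\pi_\mu(\phi)$ for $\mu\in F'$. Performing the $x$-integration first produces Fourier inversion on $F$, pinning $\mu=\lambda$ and leaving the operator-valued integral $\int_E \ee^{-\langle\mu,\Phi(\zeta)\rangle}\pi_\mu(\zeta,0)\,\dd\zeta$ acting on $\Hs_\mu$. Using Proposition~\ref{prop:1}, $\pi_\mu(\zeta,0)e_{\mu,0}(\omega)=e_{\mu,0}(\omega)\ee^{2\Phi_\mu(\omega,\zeta_1)-\Phi_\mu(\zeta_1)}$, where $\zeta_1$ denotes the $E_\mu$-component of $\zeta$. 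Gaussian integration of this expression against $\ee^{-\Phi_\mu(\zeta_1)}$ over $E_\mu$, combined with the reproducing kernel property of the Bargmann--Fock-type space $\Hs_\mu$, yields a scalar multiple of $P_{\mu,0}$ with scalar $\psi(\mu)$ times the constants balancing the Plancherel density of Proposition~\ref{prop:2}. This gives $\phi\in\widetilde\Sc_K(\Nc)\subseteq\Sc_K(\Nc)$ and $\Fc_\Nc\phi=\psi$. Conversely, for $\phi\in\Sc_K(\Nc)$ the operator $\pi_\lambda(\phi)=\pi_\lambda(\phi)P_{\lambda,0}$ has rank at most one, so $\Fc_\Nc\phi(\lambda)=\langle\pi_\lambda(\phi)e_{\lambda,0},e_{\lambda,0}\rangle=\int_\Nc\phi(\zeta,x)\ee^{-i\langle\lambda,x\rangle-\Phi_\lambda(\zeta)}\,\dd\zeta\,\dd x$ by Proposition~\ref{prop:1}(3), which is Schwartz in $\lambda$ supported in $K$ by standard estimates on the integrand.

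Part (3) is immediate from $\pi_\lambda(\varphi_1*\varphi_2)=\pi_\lambda(\varphi_1)\pi_\lambda(\varphi_2)$: for $\lambda\in K\setminus W$ both factors are scalar multiples of $P_{\lambda,0}$, so their product is $(\Fc_\Nc\varphi_1)(\lambda)(\Fc_\Nc\varphi_2)(\lambda)P_{\lambda,0}$ by idempotence, and the claim follows by taking traces. Part (4) follows by substituting the formula in (2) into $\phi_1\phi_2$ and changing variables $\mu=\lambda_1+\lambda_2$ in the resulting double integral: the exponentials $\ee^{i\langle\lambda_1+\lambda_2,x\rangle-\langle\lambda_1+\lambda_2,\Phi(\zeta)\rangle}$ reassemble into the kernel of $\Fc_\Nc^{-1}$ at $\mu$, while the inner integration in $\lambda_1$ is exactly the convolution in $F'$ of $(\Fc_\Nc\phi_j)\lvert\Pfaff\rvert$, producing the stated formula after dividing by $\lvert\Pfaff\rvert$. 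The main obstacle is the Gaussian-twisted operator integral in (1)/(2): identifying it as precisely $\psi(\mu)P_{\mu,0}$ times the correct normalization requires careful use of the reproducing kernel of $\Hs_\mu$ and bookkeeping of the factors of $2$, $\pi$, $n-d_\mu$, $m-d_\mu$, and $\lvert\Pfaff(\mu)\rvert$ against Proposition~\ref{prop:2}, together with a density/continuity argument to handle the measure-zero set $W$ where the formulas degenerate.
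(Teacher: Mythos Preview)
Your outline for (3) and (4) is correct and matches the paper's argument essentially verbatim. The issue lies in (1)--(2).

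For the backward direction (showing the formula in (2) defines a Schwartz function), your claim that ``Schwartzness in $\zeta$ follows from the Gaussian factor $\ee^{-\langle\lambda,\Phi(\zeta)\rangle}$'' has a genuine gap. The quadratic form $\langle\lambda,\Phi(\,\cdot\,)\rangle$ degenerates as $\lambda$ approaches $\partial\Lambda_+$, so the Gaussian gives no \emph{uniform} decay in $\zeta$ over $\lambda\in K$. Concretely, $\abs{\zeta}^{2N}\ee^{-\langle\lambda,\Phi(\zeta)\rangle}$ is bounded by a constant times $d(\lambda,\partial\Lambda_+)^{-N}$, and this blows up near the boundary. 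What saves the day is that $\psi\in\Sc(F',K)$ vanishes to infinite order at $\partial K$ (this is Lemma~\ref{lem:4}), and that this vanishing compensates the degeneration (via an inequality of the type $\langle\lambda,\Phi(\zeta)\rangle\Meg c\,d(\lambda,\partial\Lambda_+)\abs{\Phi(\zeta)}$, which is Lemma~\ref{lem:5}). The paper packages this trade-off into Lemma~\ref{lem:6}, applied in the abelian case $E=\Set{0}$: one writes $\phi(\zeta,x)=\tfrac{2^n}{\pi^n}T\big(\Fc_F^{-1}(\psi\abs{\Pfaff}),1,\Phi(\zeta)\big)(x)$ and reads the Schwartz estimates off the lemma. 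Your ``dominated convergence'' does not capture this mechanism.

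For the forward direction, your plan to write $\Fc_\Nc\phi(\lambda)=\int_\Nc\phi(\zeta,x)\ee^{-i\langle\lambda,x\rangle-\Phi_\lambda(\zeta)}\,\dd(\zeta,x)$ and deduce Schwartz regularity ``by standard estimates'' is awkward, because $\Phi_\lambda$ depends on $\lambda$ through $J_\lambda$ in a way that is not obviously smooth across $W$. The paper avoids this entirely: evaluating the Plancherel inversion formula at $\zeta=0$ gives the identity $\Fc_\Nc(\phi)\abs{\Pfaff}=\tfrac{\pi^n}{2^n}\Fc_F(\phi(0,\,\cdot\,))$, which is manifestly in $\Sc(F',\overline{\Lambda_+})$ since $\phi(0,\,\cdot\,)\in\Sc(F)$; dividing by the polynomial $\abs{\Pfaff}$, which vanishes only on $\partial\Lambda_+$, is then handled by Lemma~\ref{lem:4} again. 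Likewise, to identify $\pi_\mu(\phi)=\psi(\mu)P_{\mu,0}$ the paper does not compute your Gaussian operator integral directly but simply compares the inversion formula (2) with the Plancherel inversion using Proposition~\ref{prop:1}(3); your Bargmann--Fock reproducing-kernel route would work but is more laborious and the constant bookkeeping you flag as ``the main obstacle'' becomes unnecessary.
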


In particular,  convolution is commutative on $\widetilde \Sc_{K}(\Nc)$.
Observe that the mapping $\Fc_\Nc$ is essentially the (non-commutative) Fourier transform on $\Nc$, since
\[
\pi_\lambda(\varphi)= \Fc_\Nc(\varphi)(\lambda) P_{\lambda,0}
\]
for every $\lambda\in F'\setminus W$ and for every $\varphi \in \widetilde\Sc_{K}(\Nc)$ (and we may assume that $\Lambda_+\neq \emptyset$).

We first prove an auxiliary result, which we state in a more general form for later use.

\begin{deff}
	Let $K$ be a closed subset of $F'$. We define $\Gamma_K\coloneqq \Set{h\in F\colon H_{K}(h)<\infty}$.
	
	We denote by $\Oc_M(F')$ the set of $\phi \in C^\infty(F')$ which grow at most polynomially on $F'$ with every derivative.
\end{deff}

Observe that $\Gamma_K$ is the convex cone generated by the polar $K^\circ$ of $K$. 
In addition, $\Oc_M(F')$ is the space of (pointwise) multipliers of $\Sc(F')$.

Given a semi-norm $\nu$ on a vector space $X$, we shall also write $\norm{x}_\nu$ instead of $\nu(x)$, for every $x\in X$.

\begin{lem}\label{lem:6}
	Let $K$ be a closed convex subset of $\overline{\Lambda_+}$. For every $f\in \Sc_K(\Nc)$, for every $\tau\in \Oc_M(F')$, and for every $h\in \Gamma_K$, define $T(f,\tau,h)\in L^2(\Nc)$ so that
	\[
	\pi_\lambda(T(f,\tau,h))= \ee^{-\langle \lambda, h\rangle} \tau(\lambda) \pi_\lambda(f)
	\]
	for almost every $\lambda\in F'$.
	
	Then, for every $N_1$ there is $N_2\in \N$  such that for every $N_3\in \N$ there is  a continuous semi-norm $\nu$ on $\Sc(\Nc)$ such that
	\[
	\abs{T(f,\tau,h)(\zeta,x)}\meg  \frac{\norm{f}_\nu}{(1+\abs{x+i(h+\Phi(\zeta))})^{N_1}} \ee^{H_{K}(h)} \sup_{\lambda\in K} \sup_{j=1}^{N_2} \frac{\min(1,d(\lambda, \partial K))^{N_3}\abs{\tau^{(j)}(\lambda)}}{(1+\abs{\lambda})^{N_3}}
	\]
	for every $f\in \Sc_K(\Nc)$, for every $\tau\in \Oc_M(F')$, for every $h\in \Gamma_K$, and for  almost every $(\zeta,x)\in \Nc$.
\end{lem}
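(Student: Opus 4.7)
The plan is to express $T(f,\tau,h)$ via Euclidean Fourier inversion on the centre $F$ of $\Nc$, to extract polynomial decay in $\abs{x+i(h+\Phi(\zeta))}$ through iterated integration by parts in $\lambda$, and to exploit the Schwartz regularity of $f$ together with the vanishing of $\Fc_F f(\zeta,\,\cdot\,)$ on $\partial K$. I may assume $\Lambda_+\neq\emptyset$, for otherwise $f=0$ by Proposition~\ref{prop:3}; then $\Phi$ is non-degenerate by Proposition~\ref{prop:7}, $d_\lambda=0$ for every $\lambda\in \Lambda_+$, and Proposition~\ref{prop:5} ensures that $\Fc_F f(\zeta,\,\cdot\,)\in \Sc(F')$ is supported in $K$ for every $\zeta\in E$. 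Using the factorisation $\pi_\lambda(\zeta,x)=\ee^{-i\langle\lambda,x\rangle}\pi_\lambda(\zeta,0)$ to deduce $\Fc_F T(f,\tau,h)(\zeta,\lambda)=\ee^{-\langle\lambda,h\rangle}\tau(\lambda)\Fc_F f(\zeta,\lambda)$, Euclidean Fourier inversion on $F$ yields, with $y'\coloneqq x+ih$,
\[
T(f,\tau,h)(\zeta,x)=\frac{1}{(2\pi)^m}\int_{K}\tau(\lambda)\Fc_F f(\zeta,\lambda)\ee^{i\langle\lambda,y'\rangle}\,\dd\lambda,
\]
with $\abs{\ee^{i\langle\lambda,y'\rangle}}\meg\ee^{H_K(h)}$ for $\lambda\in K$.

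To extract decay in $\abs{y'}$, I shall apply the constant-coefficient operator $\hat L\coloneqq -i\sum_k\overline{y'_k}\partial_{\lambda_k}$, which satisfies $\hat L\ee^{i\langle\lambda,y'\rangle}=\abs{y'}^2\ee^{i\langle\lambda,y'\rangle}$. The integrand $\tau\Fc_F f(\zeta,\,\cdot\,)$ is smooth on $F'$ and supported in $K$; smoothness combined with vanishing outside $K$ forces the integrand and all its derivatives to vanish identically on $\partial K$, so integrating by parts $N_1$ times produces no boundary terms. Since $\hat L^{N_1}$ is a linear combination of operators of the form $c_\alpha(\overline{y'})^\alpha\partial^\alpha_\lambda$ with $\abs{\alpha}=N_1$, combining the resulting estimate with the trivial bound gives
\[
(1+\abs{y'})^{N_1}\abs{T(f,\tau,h)(\zeta,x)}\meg C\ee^{H_K(h)}\int_{K}\sum_{\abs{\alpha}\meg N_1}\abs{\partial_\lambda^\alpha[\tau(\lambda)\Fc_F f(\zeta,\lambda)]}\,\dd\lambda.
\]

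I shall take $N_2\coloneqq N_1$. A Leibniz expansion writes the integrand as a sum of products $\tau^{(\beta)}(\lambda)\partial^\gamma\Fc_F f(\zeta,\lambda)$ with $\abs{\beta}+\abs{\gamma}\meg N_1$, and the supremum $A$ on the right-hand side of the lemma gives $\abs{\tau^{(\beta)}(\lambda)}\meg A(1+\abs{\lambda})^{N_3}\min(1,d(\lambda,\partial K))^{-N_3}$. For the Schwartz factor, since $\partial^\gamma\Fc_F f(\zeta,\,\cdot\,)$ is Schwartz and supported in $K$, and hence vanishes on $\partial K$ together with all its derivatives, Taylor's theorem along the segment from $\lambda$ to its nearest boundary point $\lambda_0\in \partial K$ (which stays in $K$ by convexity) yields $\abs{\partial^\gamma\Fc_F f(\zeta,\lambda)}\meg C\,d(\lambda,\partial K)^{N_3}\sup_{\abs{\beta}=N_3,\mu\in K}\abs{\partial^{\beta+\gamma}\Fc_F f(\zeta,\mu)}$ whenever $d(\lambda,\partial K)\meg 1$. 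Since the relevant $\mu\in[\lambda_0,\lambda]$ satisfy $1+\abs{\mu}\Meg(1+\abs{\lambda})/2$, and the Schwartz decay of $\Fc_F f$ (controlled by a suitable semi-norm $\norm{f}_\nu$ on $\Sc(\Nc)$ depending on $N_1$ and $N_3$) gives the pointwise bound $(1+\abs{\mu})^{N_3+m+1}\abs{\partial^{\beta+\gamma}\Fc_F f(\zeta,\mu)}\meg C(1+\abs{\zeta})^{-2N_1}\norm{f}_\nu$, I obtain
\[
(1+\abs{\lambda})^{N_3}\min(1,d(\lambda,\partial K))^{-N_3}\abs{\partial^\gamma\Fc_F f(\zeta,\lambda)}\meg C(1+\abs{\lambda})^{-(m+1)}(1+\abs{\zeta})^{-2N_1}\norm{f}_\nu,
\]
which is integrable in $\lambda$ over $F'$.

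Integrating in $\lambda$ assembles into $(1+\abs{y'})^{N_1}\abs{T(f,\tau,h)(\zeta,x)}\meg C A\ee^{H_K(h)}(1+\abs{\zeta})^{-2N_1}\norm{f}_\nu$. The bound $\abs{\Phi(\zeta)}\meg C\abs{\zeta}^2$ yields $(1+\abs{\zeta})^{-2N_1}\meg C(1+\abs{\Phi(\zeta)})^{-N_1}$, while the triangle inequality gives $(1+\abs{y'})(1+\abs{\Phi(\zeta)})\Meg 1+\abs{y'}+\abs{\Phi(\zeta)}\Meg 1+\abs{y'+i\Phi(\zeta)}=1+\abs{x+i(h+\Phi(\zeta))}$, whence the desired estimate follows. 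The main technical obstacle lies in controlling the singularity of $\min(1,d(\lambda,\partial K))^{-N_3}$ near $\partial K$: this is handled by the observation that Schwartz functions supported in a closed set vanish to infinite order on the boundary, which, via Taylor's theorem, trades the singular weight for additional derivatives absorbed into a Schwartz semi-norm of $f$.
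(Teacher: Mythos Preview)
Your argument is correct, and it is genuinely simpler than the paper's. The key divergence is the choice of weight for the integration by parts in $\lambda$.

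The paper expands $T(f,\tau,h)(\zeta,x)$ through the group Fourier inversion formula, which produces an additional integral over $E$ in a variable $\zeta'$ together with the factor $\ee^{-\langle\lambda,\Phi(\zeta-\zeta')\rangle}$. It then integrates by parts against the \emph{full} weight $(x+i(h+\Phi(\zeta)))^\alpha$, so that $\lambda$-derivatives hit this exponential and generate polynomial factors $\Phi(2\zeta-\zeta',\zeta')^{\beta_3}$ that grow in $\zeta$. Controlling them requires Lemma~\ref{lem:5} (the cone inequality $\langle\lambda,h\rangle\Meg C\abs{h}\,d(\lambda,\partial\Lambda_+)$), which converts $\ee^{-\langle\lambda,\Phi(\zeta-\zeta')\rangle}$ into exponential decay in $d(\lambda,\partial\Lambda_+)\abs{\Phi(\zeta-\zeta')}$ and, combined with the factor $\min(1,d(\lambda,\partial K))^{N}$ coming from Lemma~\ref{lem:4}, absorbs the polynomial growth.

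You instead observe directly that $\Fc_F T(f,\tau,h)(\zeta,\lambda)=\ee^{-\langle\lambda,h\rangle}\tau(\lambda)\Fc_F f(\zeta,\lambda)$ and integrate by parts only against $y'=x+ih$. No $\zeta$-dependent exponential appears, so no polynomial growth in $\zeta$ is created and Lemma~\ref{lem:5} is never needed; the decay in $\abs{\Phi(\zeta)}$ comes purely from the Schwartz decay of $f$ in $\zeta$, and is merged with the $\abs{x+ih}$-decay at the end via $(1+\abs{y'})(1+\abs{\Phi(\zeta)})\Meg 1+\abs{y'+i\Phi(\zeta)}$. This is more elementary and shorter. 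The paper's route, on the other hand, stays closer to the non-commutative Fourier picture and makes the role of the cone geometry (Lemma~\ref{lem:5}) explicit. One small point worth spelling out in your write-up: the identity for $\Fc_F T(f,\tau,h)$ does not follow from the factorisation $\pi_\lambda(\zeta,x)=\ee^{-i\langle\lambda,x\rangle}\pi_\lambda(\zeta,0)$ alone; one should define the candidate $T'$ by that formula, check $\pi_\lambda(T')=\pi_\lambda(T)$, and invoke the injectivity in Proposition~\ref{prop:2}.
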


Before we pass to the proof, we need some elementary lemmas for which we could not find a reference.

\begin{lem}\label{lem:4}
	Let $K$ be a   closed convex subset of $F'$. Then, for every differential operator with constant coefficients on $F'$ and for every $N\in\N$, there is a continuous semi-norm $\nu$ on $\Sc(F')$ such that
	\[
	\abs{X \phi(\lambda)}\meg \frac{\min(1,d(\lambda, \partial K))^N}{(1+\abs{\lambda})^{N}} 		\norm{\phi}_\nu
	\]
	for every $\phi \in \Sc(F',K)$.
\end{lem}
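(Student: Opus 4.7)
The plan is to exploit that $\phi\in\Sc(F',K)$ is smooth and supported in the closed set $K$, which forces $\phi$ and all its derivatives to vanish identically on the open set $F'\setminus K$ and hence, by continuity, on $\overline{F'\setminus K}\supseteq \partial K$. In other words $\phi$ is $C^\infty$-flat at every point of $\partial K$, and since $X$ has constant coefficients the same is true of $X\phi$ and of all its derivatives.

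For $\lambda\notin K$ one has $X\phi(\lambda)=0$ and the estimate is trivial. For $\lambda\in K$ with $d(\lambda,\partial K)\Meg 1$---in particular in the degenerate case $\partial K=\emptyset$, where we read $d(\lambda,\partial K)=+\infty$---the assertion reduces to the standard Schwartz bound $\abs{X\phi(\lambda)}\meg (1+\abs{\lambda})^{-N}\norm{\phi}_{\nu_0}$ for a suitable semi-norm $\nu_0$, since then $\min(1,d(\lambda,\partial K))=1$. The substantive case is $\lambda\in K$ with $0<d(\lambda,\partial K)<1$. Choose $\lambda_0\in\partial K$ with $\abs{\lambda-\lambda_0}=d(\lambda,\partial K)$ (existence follows because $\partial K$ is closed and its intersection with the closed ball of radius $1$ centred at $\lambda$ is compact and non-empty). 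Apply Taylor's formula with integral remainder to $X\phi$ at $\lambda_0$ of order $N$: since every derivative of $\phi$, hence of $X\phi$, vanishes at $\lambda_0$, the polynomial part is zero and we obtain
\[
\abs{X\phi(\lambda)}\meg C_N\,\abs{\lambda-\lambda_0}^N \sup_{\abs{\beta}\meg N+\deg X}\,\sup_{\mu\in[\lambda_0,\lambda]} \abs{\partial^\beta \phi(\mu)}.
\]
Any such $\mu$ satisfies $\abs{\mu-\lambda}<1$, whence $1+\abs{\lambda}\meg 2(1+\abs{\mu})$. Bounding the inner supremum by the Schwartz semi-norm $\nu(\phi)\coloneqq \sup_{\abs{\beta}\meg N+\deg X}\sup_{\mu\in F'}(1+\abs{\mu})^N\abs{\partial^\beta \phi(\mu)}$ then yields the claimed estimate, since $\min(1,d(\lambda,\partial K))=d(\lambda,\partial K)$ in this regime.

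The only real content of the argument is the $C^\infty$-flatness of $\phi$ along $\partial K$; once this is secured, the remainder of the proof is a direct application of Taylor's theorem together with the routine comparison $1+\abs{\lambda}\asymp 1+\abs{\mu}$ for $\mu$ within unit distance of $\lambda$. Note that the convexity of $K$ is not actually used.
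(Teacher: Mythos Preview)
Your proof is correct and follows the same overall strategy as the paper: treat separately the cases $\lambda\notin K$, $\lambda\in K$ with $d(\lambda,\partial K)\Meg 1$, and $\lambda\in K$ with $d(\lambda,\partial K)<1$; in the last case, pick a nearest boundary point $\lambda_0$ and Taylor-expand, exploiting that $X\phi$ is flat at $\lambda_0$.

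The one genuine difference lies in how flatness at $\lambda_0$ is obtained. The paper invokes convexity of $K$ to conclude that the one-dimensional map $t\mapsto (X\phi)(\lambda_0+t(\lambda-\lambda_0))$ vanishes on all of $\R_-$, and hence vanishes with every derivative at $t=0$; it then applies a one-variable Taylor estimate. You instead observe directly that every $\partial^\alpha\phi$ vanishes on the open set $F'\setminus K$ and therefore, by continuity, on $\overline{F'\setminus K}\supseteq\partial K$; this gives flatness at $\lambda_0$ without any appeal to convexity, after which you apply the multivariable Taylor remainder. Your route is slightly more elementary and, as you correctly note, shows that the lemma holds for any closed $K$; the paper's route keeps the Taylor step one-dimensional at the cost of using the convexity hypothesis.
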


\begin{proof}
	Observe that it will suffice to prove the stated inequality for every $\lambda$ in the interior $U$ of $K$, since $X\phi$ vanishes on the complement of $U$ in $F'$. In addition, it will suffice to consider the case in which $\partial K\neq \emptyset$, for otherwise either $K=\emptyset$ or $K=F'$, in which cases the assertion is clear. Further, it will suffice to prove the assertion for $\lambda\in U$ such that $d(\lambda, \partial K)\meg 1$.

	Then, take $\lambda\in U$ such that $d(\lambda, \partial K)\meg 1$, and take $\lambda_0\in \partial K$ such that $d(\lambda,\partial K)=\abs{\lambda-\lambda_0}$. Since $K$ is convex,  the mapping $t\mapsto (X\phi)(\lambda_0+ t (\lambda-\lambda_0))$ vanishes on $\R_-$, so that it vanishes with every derivative at $0$. Hence, by Taylor's formula, for every $N\in\N$,
	\[
	\begin{split}
		\abs{(X\phi)(\lambda)}&\meg \abs{\lambda-\lambda_0}^N \sup_{[\lambda_0,\lambda]} \abs{\partial_{\sgn(\lambda-\lambda_0)}^N X\phi}
		\meg d(\lambda, \partial K)^N \sup_{F'} \sup_{\abs{v'}=1} \abs{\partial_{v'}^N X \phi},
	\end{split}
	\]
	whence the result for $\abs{\lambda}\meg 2$. If, otherwise, $\abs{\lambda}\Meg 2$ (and $d(\lambda,\partial K)\meg 1$), then the preceding arguments show that, for every $N\in\N$,
	\[
	\begin{split}
		\abs{(X\phi)(\lambda)}\meg \frac{d(\lambda, \partial K)^N}{(1+\abs{\lambda})^{N}} \sup_{F'} (2+\abs{\,\cdot\,})^{N} \sup_{\abs{v'}=1} \abs{\partial_{v'}^N X \phi},
	\end{split}
	\]
	since $[\lambda_0,\lambda]\subseteq \overline B_{F'}(\lambda,1)$. The assertion follows.
\end{proof}

\begin{lem}\label{lem:5}
	Let $K$ be a closed convex \emph{cone} in $F'$.
	Then, there is a constant $C>0$ such that
	\[
	\langle \lambda, h\rangle \Meg C \abs{h} d(\lambda, \partial K)
	\]
	for every $\lambda\in K$ and for every $h\in K^\circ$.
\end{lem}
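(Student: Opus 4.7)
The plan is to use the Hilbert structures on $F$ and $F'$ to identify them via the Riesz isomorphism, so that the pairing $\langle\lambda,h\rangle$ becomes an inner product and the two norms agree; under this identification I will show that the inequality holds with $C=1$.

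First I would dispose of the degenerate cases. If $K=\emptyset$ the assertion is vacuous; if $K=F'$ then the bipolar theorem gives $K^\circ=\Set{0}$, so $h=0$ and both sides vanish; and if $K$ has empty interior in $F'$, then $\partial K=K$, so $d(\lambda,\partial K)=0$ for every $\lambda\in K$ and the inequality reduces to $\langle\lambda,h\rangle\Meg 0$, which is exactly the defining property of the polar of the cone $K$.

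Assuming now that $K$ has non-empty interior and $\partial K\neq\emptyset$, fix $\lambda\in K$ and $h\in K^\circ\setminus\Set{0}$, and set $r\coloneqq d(\lambda,\partial K)$. The key geometric step is to show that the closed ball $\overline B(\lambda,r)$ is contained in $K$: if some $\lambda'$ with $\abs{\lambda'-\lambda}<r$ were not in $K$, then by convexity the segment $[\lambda,\lambda']$ would cross $\partial K$ at a point at distance strictly less than $r$ from $\lambda$, contradicting the definition of $r$; taking the closure gives $\overline B(\lambda,r)\subseteq\overline K=K$.

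I would then apply this to $\lambda'\coloneqq\lambda-rh/\abs{h}$, which sits at distance exactly $r$ from $\lambda$ and therefore lies in $K$. The polar condition $h\in K^\circ$ then gives $\langle\lambda',h\rangle\Meg 0$, i.e.,
\[
\langle\lambda,h\rangle\Meg r\,\langle h/\abs{h},h\rangle=r\abs{h}=d(\lambda,\partial K)\abs{h},
\]
proving the lemma with $C=1$. There is no real obstacle here; the only care required is the case analysis for degenerate $K$ and the Riesz identification, which makes it meaningful to translate $\lambda\in F'$ in the direction of $h\in F$.
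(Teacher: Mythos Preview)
Your proof is correct and, in fact, considerably more efficient than the one in the paper. The key observation---that the closed ball $\overline B(\lambda,r)$ with $r=d(\lambda,\partial K)$ lies in $K$, so the translate $\lambda'=\lambda-r h/\abs{h}$ stays in $K$ and the polar condition $\langle\lambda',h\rangle\Meg 0$ immediately gives the inequality with $C=1$---is the standard ``inscribed ball'' trick, and it works without any further machinery once the Riesz identification is in place. Your handling of the degenerate cases ($K=\emptyset$, $K=F'$, $K$ with empty interior) is also clean and matches the paper's reductions.

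The paper takes a rather different and more laborious route: it reduces by homogeneity to unit vectors, fixes an interior point $\lambda_0\in K$ with $\langle\lambda_0,h\rangle\Meg C_1>0$ on the unit sphere of $K^\circ$ (by compactness), parametrizes the intersection $U\cap\partial B_{F'}(0,1)$ as $\sgn((1-t)\lambda_0+t\lambda)$ with $\lambda\in\partial K\cap\partial B_{F'}(0,1)$ and $t\in[0,1)$, and then chains several inequalities to extract a constant $C=C_1/2$. Your argument bypasses the parametrization entirely and even yields the sharp constant $C=1$; the paper's approach has no obvious advantage here, so your proof is a genuine simplification.
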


\begin{proof}
	We may assume that $K$ has a non-empty interior $U$ and that $K\neq F'$, for otherwise the assertion is trivial (with the usual convention $0\cdot\infty=0$).
	Then, by homogeneity, it will suffice to prove the assertion for $\lambda\in K\cap \partial B_{F'}(0,1)$ and $h\in K^\circ \cap \partial B_F(0,1)$. 
	
	Define $H\coloneqq \partial K\cap \partial B_{F'}(0,1)$, so that $H$ is a compact subset of $F'$. In addition, observe that, since $K\neq F' $, $K$ is contained in a half-space. Therefore, there is $\lambda_0\in U\cap \partial B_{F'}(0,1)$ such that $K\subseteq \Set{\lambda'\in F'\colon \langle \lambda' ,\lambda_0\rangle_{F'}\Meg 0}$.\footnote{Define $L\coloneqq \Set{\lambda'\in F'\colon \forall \lambda''\in K \:\:\langle \lambda',\lambda''\rangle_{F'}\Meg 0}$, and observe that $L$ is a closed convex cone not reduced to $0$, that $K= \Set{\lambda'\in F'\colon \forall \lambda''\in L\:\:\langle \lambda',\lambda''\rangle_{F'}\Meg 0}$, and that our assertion is equivalent to requiring that $U\cap L\neq \emptyset$. Assume by contradiction that $L\cap U=\emptyset$. Then,~	\cite[Theorem 11.3]{Rockafellar} implies that there is a non-zero $\lambda_1\in F'$ such that $\langle \lambda_1,\lambda'\rangle_{F'}\Meg0 $ for every $\lambda'\in K$ and $\langle \lambda_1,\lambda'\rangle_{F'}\meg0 $ for every $\lambda'\in L$. Then, $\lambda_1\in L\cap (-K)$, so that $\abs{\lambda_1}^2=\langle \lambda_1,\lambda_1\rangle\meg 0$: contradiction.} Let us prove that
	\[                                                                                                                                                                   
	U\cap \partial B_{F'}(0,1)=\Set{ \sgn((1-t)\lambda_0+t\lambda)\colon t\in [0,1), \lambda\in H},
	\]
	where $\sgn(\lambda)=\frac{\lambda}{\abs{\lambda}}$ for $\lambda\neq 0$, and $\sgn(0)=0$.
	The inclusion $\supseteq$ is clear. Conversely, take $\lambda\in U\cap \partial B_{F'}(0,1)$, $\lambda \neq \lambda_0$, and observe that the half-line $\lambda_0+\R_+(\lambda-\lambda_0)$ meets the hyperplane $ \Set{\lambda'\colon \langle \lambda' ,\lambda_0\rangle_{F'}=0}$, hence in particular $\partial K$.\footnote{Indeed, $\langle \lambda,\lambda_0\rangle_{F'}<1$, so that $\langle \lambda_0+t(\lambda-\lambda_0), \lambda_0\rangle_{F'}=1+t(\langle \lambda, \lambda_0\rangle_{F'}-1)\to -\infty$ for $t\to +\infty$.} Choose $t>1$ so that $\lambda'\coloneqq (1-t)\lambda_0+t\lambda\in \partial K$, and observe that $\abs{\lambda'}>1 $. Then, setting $t'\coloneqq \frac{\abs{\lambda'}}{\abs{\lambda'}+t-1}\in (0,1)$, we have
	\[
	\frac{t}{\abs{\lambda'}+t-1}\lambda=(1-t')\lambda_0+t' \sgn(\lambda')
	\]
	with $\frac{t}{t+\abs{\lambda'}-1}\in (0,1)$, so that  $\lambda=\sgn((1-t')\lambda_0+t' \sgn(\lambda'))$ with $\sgn(\lambda')\in H$.  Thus, also the inclusion $\subseteq$ is proved.

	Now, observe that there is a constant $C_1>0$ such that
	\[
	\langle \lambda_0, h\rangle \Meg C_1 
	\]
	for every $h\in K^\circ \cap \partial B_F(0,1)$, since $\lambda_0$ vanishes nowhere on the compact set $K^\circ \cap \partial B_F(0,1)$. 
	Then, for every $t\in [0,1)$ and for every $\lambda\in H$,
	\[
	\begin{split}
		\langle \sgn((1-t)\lambda_0+t\lambda),h \rangle&\Meg \frac{1-t}{\abs{(1-t)\lambda_0+t\lambda}}\langle \lambda_0,h\rangle \\
		&\Meg C_1\frac{1-t}{\abs{(1-t)\lambda_0+t\lambda}}\\
		&= C_1\frac{\abs{((1-t)\lambda_0+t\lambda)-\lambda}}{\abs{(1-t)\lambda_0+t\lambda}\abs{\lambda_0-\lambda}}\\
		&\Meg \frac{C_1}{2}\frac{d((1-t)\lambda_0+t\lambda,\partial K)}{\abs{(1-t)\lambda_0+t\lambda}}\\
		&=\frac{C_1}{2}d(\sgn((1-t)\lambda_0+t\lambda),\partial K)
	\end{split}
	\]
	for every $h\in K^\circ \cap B_F(0,1)$. The assertion follows.	
\end{proof}

\begin{proof}[Proof of Lemma~\ref{lem:6}.]
	We may assume that $\Lambda_+\neq \emptyset$, so that $d_\lambda=0$ for  every $\lambda\in F'\setminus W$.
	Take $f\in \Sc_K(\Nc)$ and $\tau\in \Oc_M(F')$. Observe first that
	\[
	\begin{split}
		\tr(\pi_\lambda(f)\pi_\lambda(\zeta,x)^*)=& \tr(\pi_\lambda(\zeta,x)^*\pi_\lambda(f) P_{\lambda,0})\\
		&=\langle \pi_\lambda(\zeta,x)^*\pi_\lambda(f) e_{\lambda,0}\vert e_{\lambda,0}\rangle\\
		&=\int_\Nc \langle \pi_\lambda((\zeta,x)^{-1} (\zeta',x')) e_{\lambda,0}\vert e_{\lambda,0}\rangle f(\zeta',x')\,\dd (\zeta',x')\\
		&=\int_E \ee^{-\langle \lambda_\C,\Phi(\zeta-\zeta')-i (x+2 \Im \Phi(\zeta,\zeta'))\rangle}\Fc_F(f(\zeta',\,\cdot\,))(\lambda)\,\dd \zeta'
	\end{split}
	\]
	for every $\lambda\in F'\setminus W$ and for every $(\zeta,x)\in \Nc$, thanks to Proposition~\ref{prop:1} and the properties of $\Sc_K(\Nc)$. Therefore, Proposition~\ref{prop:2} implies that
	\[
	\begin{split}
		T(f,\tau,h)(\zeta,x)
		&=\frac{2^{n-m}}{\pi^{n+m}}  \int_{K}  \ee^{-\langle \lambda,h\rangle}\abs{\Pfaff(\lambda)} \tau(\lambda)\int_E \ee^{-\langle \lambda_\C,\Phi(\zeta-\zeta')-i (x+2 \Im \Phi(\zeta,\zeta'))\rangle}\Fc_F(f(\zeta',\,\cdot\,))(\lambda)\,\dd \zeta'\,\dd \lambda
	\end{split}
	\]
	for every $(\zeta,x)\in \Nc$ and for every $h\in \Gamma_K$ (to see that the integral is well defined, observe that the mapping $\zeta'\mapsto \Fc_F(f(\zeta'\,\cdot\,))$ belongs to $\Sc(E;\Sc(F',K))$).

	We identify $F$ and $F'$ with $\R^m$ by means of dual orthonormal bases, to simplify the notation.
	Then, take $\alpha\in \N^m$, and observe that, for every $(\zeta,x)\in \Nc$ and for every $h\in \Gamma_K$,
	\[
	\begin{split}
		&(x+i(h+\Phi(\zeta)))^\alpha T(f,\tau,h)(\zeta,x)=(-i)^{\abs{\alpha}}\frac{2^{n-m}}{\pi^{n+m}}  \int_{K}  \Big(\partial^{\alpha}_\lambda \ee^{i\langle \lambda_\C, x+i(h+\Phi(\zeta))\rangle} \Big)\abs{\Pfaff(\lambda)} \tau(\lambda) \\
		&\qquad\qquad \times\int_E \ee^{-\langle \lambda_\C,\Phi(\zeta-\zeta')-\Phi(\zeta)-2 i  \Im \Phi(\zeta,\zeta')\rangle}\Fc_F(f(\zeta',\,\cdot\,))(\lambda)\,\dd \zeta'\,\dd \lambda\\
		&\qquad=i^{\abs{\alpha}}\frac{2^{n-m}}{\pi^{n+m}}  \int_{K}  \ee^{i\langle \lambda_\C, x+i(h+\Phi(\zeta))\rangle} \sum_{\beta_1+\beta_2+\beta_3+\beta_4=\alpha}  \frac{\alpha!}{\beta_1!\beta_2!\beta_3!\beta_4!}\partial^{\beta_1}_\lambda\abs{\Pfaff(\lambda)} (\partial^{\beta_2}\tau)(\lambda) \int_E\Phi(2\zeta-\zeta',\zeta')^{\beta_3}\\
		&\qquad\qquad \times \ee^{-\langle \lambda_\C,\Phi(\zeta-\zeta')-\Phi(\zeta)-2 i  \Im \Phi(\zeta,\zeta')\rangle}  \partial^{\beta_4}_{F'}\Fc_F(f(\zeta',\,\cdot\,))(\lambda)\,\dd \zeta'\,\dd \lambda,
	\end{split}
	\]
	where the integration by parts in the second equality can be applied since the mapping $\zeta'\mapsto\Fc_F(f(\zeta',\,\cdot\,))$ belongs to $\Sc(E; \Sc(F',K))$ and $\abs{\Pfaff(\,\cdot\,)}$ is polynomial on $\Lambda_+$.\footnote{Note that $\abs{\Pfaff(\lambda)}=\det J_\lambda$ for every $\lambda\in \Lambda_+$, with the notation of Section~\ref{sec:2:1}.}
	Then, fix $\beta_1,\beta_2,\beta_3,\beta_4\in \N^m$, and consider the function $I_{\beta_1,\beta_2,\beta_3,\beta_4}$ defined as follows:
	\[
	(\zeta,x)\mapsto  \int_{K}   \partial^{\beta_1}_\lambda \abs{\Pfaff(\lambda)}(\partial^{\beta_2}\tau)(\lambda) \int_E  \Phi(2\zeta-\zeta',\zeta')^{\beta_3} \ee^{-\langle \lambda_\C,h+\Phi(\zeta-\zeta')-i (x+2 \Im \Phi(\zeta,\zeta'))\rangle} \partial^{\beta_4}_{F'}\Fc_F(\phi(\zeta',\,\cdot\,))(\lambda)\,\dd \zeta'\,\dd \lambda.
	\]
	Then,
	\[
	(x+i(h+\Phi(\zeta)))^\alpha T(f,\tau,h)=i^{\abs{\alpha}}\frac{2^{n-m}}{\pi^{n+m}}   \sum_{\beta_1+\beta_2+\beta_3+\beta_4=\alpha} \frac{\alpha!}{\beta_1!\beta_2!\beta_3!\beta_4!}I_{\beta_1,\beta_2,\beta_3,\beta_4}
	\]
	for every $(\zeta,x)\in \Nc$, for every $h\in \Gamma_K$, and for every $\alpha\in \N^m$.
	Observe that Lemma~\ref{lem:4} implies that, for every $N\in\N$ and for every $\beta\in \N^m$, there are $C'_{N,\beta}, M_{N,\beta}\in \N$ such that
	\[
	\abs{\partial^{\beta}_{F'}\Fc_F(g(\zeta',\,\cdot\,))(\lambda)}\meg C'_{N,\beta}\frac{\min(1,d(\lambda,\partial K))^N}{(1+\abs{\lambda})^N} \sup_{x'\in F} (1+\abs{x'})^{M_{N,\beta}} \sup_{\abs{\gamma}\meg M_{N,\beta}} \abs{\partial_F^\gamma g(\zeta',x')}
	\]
	for every $\lambda\in K$, for every $\zeta'\in E$, and for every $g\in  \Sc_K(\Nc)$. Therefore,
	\[
	\begin{split}
		\abs{I_{\beta_1,\beta_2,\beta_3,\beta_4}(\zeta,x)}&\meg C'_{N,\beta_4}  \int_{K}  \frac{\partial^{\beta_1}_\lambda\abs{\Pfaff(\lambda)}\abs{(\partial^{\beta_2}\tau)(\lambda)}}{(1+\abs{\lambda})^N} \int_E  \abs*{\Phi(2\zeta-\zeta',\zeta')^{\beta_3}}\min(1,d(\lambda,\partial K))^N  \ee^{-\langle \lambda, h+\Phi(\zeta-\zeta')\rangle} \\
		&\qquad\times \sup_{x'\in F} (1+\abs{x'})^{M_{N,\beta_4}} \sup_{\abs{\gamma}\meg M_{N,\beta_4}} \abs{\partial_F^\gamma f(\zeta',x')} \,\dd \zeta'\,\dd \lambda
	\end{split}
	\]
	for every $\beta_1,\beta_2,\beta_3,\beta_4\in \N^m$, for every $(\zeta,x)\in \Nc$, for every $h\in \Gamma_K$,  and for every $N\in\N$.
	Now, observe that Lemma~\ref{lem:5} implies that there is a constant $c>0$ such that
	\[
	\ee^{-\langle \lambda_\C,\Phi(\zeta-\zeta')\rangle} \meg \ee^{-c d(\lambda,\partial \Lambda_+) \abs{\Phi(\zeta-\zeta')}}  
	\]
	for every $\lambda\in \Lambda_+$ and for every $\zeta,\zeta'\in E$. In addition, for every $\beta\in \N^m$ there is a constant $C''_\beta>0$ such that
	\[
	\abs{\Phi(2\zeta-\zeta',\zeta')^{\beta}}\meg C''_\beta (1+\abs{\Phi(\zeta-\zeta')})^{\abs{\beta}/2}(1+\abs{\zeta'})^{2 \abs{\beta}} 
	\]
	for every $\zeta,\zeta'\in E$.
	
	Therefore, for every $N\in\N$ there is a constant $C'''_N>0$ such that
	\[
	\begin{split}
		&\abs{\Phi(2\zeta-\zeta',\zeta')^{\beta}}\min(1,d(\lambda,\partial K))^N  \ee^{-\langle \lambda_\C,\Phi(\zeta-\zeta')\rangle}\\
		&\qquad\meg C''_\beta (1+\abs{\zeta'})^{2 \abs{\beta}} (1+\abs{\Phi(\zeta-\zeta')})^{\abs{\beta}/2}\min(1,d(\lambda,\partial \Lambda_+))^N  \ee^{-c d(\lambda,\partial \Lambda_+) \abs{\Phi(\zeta-\zeta')}}  \\
		&\qquad\meg C'''_N(1+\abs{\zeta'})^{2 \abs{\beta}}
	\end{split}
	\]
	for every $\beta\in \N^m$ with $\abs{\beta}\meg 2N$, for every $\lambda\in K$, and for every $\zeta,\zeta'\in E$.
	Therefore,
	\[
	\begin{split}
		\abs{I_{\beta_1,\beta_2,\beta_3,\beta_4}(\zeta,x)}&\meg C'_{2N,\beta_4} C'''_N \ee^{H_{K}(h)} \int_{K}  \frac{\partial^{\beta_1}_\lambda\abs{\Pfaff(\lambda)} \abs{(\partial^{\beta_2}\tau)(\lambda)} \min(1,d(\lambda, \partial K))^N}{(1+\abs{\lambda})^{2N}}\,\dd \lambda\\
		&\qquad\times\int_E (1+\abs{\zeta'})^{4 N} \sup_{x'\in F} (1+\abs{x'})^{M_{ 2N, \beta_4}} \sup_{\abs{\gamma}\meg M_{2N,\beta_4}} \abs{\partial_F^\gamma f(\zeta',x')} \,\dd \zeta'
	\end{split}
	\]
	for every $\beta_1,\beta_2,\beta_3,\beta_4\in \N^m$, for every $N\Meg \abs{\beta_3}/2$, for every $(\zeta,x)\in \Nc$, and for every $h\in \Gamma_K$. 
	Observe that the mapping
	\[
	f \mapsto \int_E (1+\abs{\zeta'})^{4 N} \sup_{x'\in F} (1+\abs{x'})^{M_{ N, \beta_3}} \sup_{\abs{\gamma}\meg M_{N,\beta_3}} \abs{\partial_F^\gamma f(\zeta',x')} \,\dd \zeta'
	\]
	is a continuous semi-norm on $\Sc(\Nc)$. The assertion then follows by the arbitrariness of $\alpha$.
\end{proof}

\begin{proof}[Proof of Proposition~\ref{prop:40}.]
	We may assume that $\Lambda_+\neq \emptyset$, so that $d_\lambda=0$ for every $\lambda\in F'\setminus W$.
	
	{ (1)--(2)} It will suffice to prove the assertion for $K=\overline{\Lambda_+}$.	
	Take $\phi \in\widetilde \Sc_{\overline{\Lambda_+}}(\Nc)$, and observe that 
	\[
	\phi(0,x)= \frac{2^{n-m}}{\pi^{n+m}} \int_{\Lambda_+} \Fc_\Nc(\phi)(\lambda) \ee^{i \langle \lambda, x\rangle} \abs{\Pfaff(\lambda)}\,\dd \lambda
	\]
	for every $x\in F$, thanks to Propositions~\ref{prop:1} and~\ref{prop:2}. Therefore,
	\[
	\Fc_\Nc(\phi) \abs{\Pfaff(\,\cdot\,)}=\frac{\pi^n}{2^n}  \Fc_F(\phi(0,\,\cdot\,))\in \Sc(F',\overline{\Lambda_+})
	\]
	since $\phi(0,\cdot\,)\in \Sc_{\overline{\Lambda_+}}(F)$, so that $\Fc_\Nc(\phi)\in \Sc(F',\overline{\Lambda_+})$ since $\abs{\Pfaff(\,\cdot\,)}$ is polynomial and vanishes nowhere on $\Lambda_+$ (cf.~Lemma~\ref{lem:4}). 
	
	Conversely, take $\psi \in \Sc(F',\overline{\Lambda_+})$, and define
	\[
	\phi(\zeta,x)\coloneqq \frac{2^{n-m} }{\pi^{n+m}}\int_{K} \psi(\lambda)\abs{\Pfaff(\lambda)} \ee^{i\langle\lambda,x\rangle-\langle \lambda, \Phi(\zeta)\rangle}\,\dd \lambda=\frac{2^n}{\pi^n}T(\Fc^{-1}_F (\psi \abs{\Pfaff(\,\cdot\,)}), 1, \Phi(\zeta))(x)
	\]
	for every $(\zeta,x)\in \Nc$,
	with the notation of Lemma~\ref{lem:6} (applied with $F$ in place of $\Nc$, that is, in the case $E=\Set{0}$). Observe that, if $X$ is a left-invariant differential operator on $\Nc$, then it is a differential operator with polynomial coefficients on $E\times F$. Hence, there are polynomials $P_1,\dots, P_k$ on $E$ and $Q_1,\dots, Q_k$ on $F'$ such that
	\[
	\begin{split}
	(X \phi)(\zeta,x)&= \frac{2^{n-m} }{\pi^{n+m}}\sum_{j=1}^k P_j(\zeta)\int_{K} \psi(\lambda)\abs{\Pfaff(\lambda)} Q_j(\lambda) \ee^{i\langle\lambda,x\rangle-\langle \lambda, \Phi(\zeta)\rangle}\,\dd \lambda\\
		&=\frac{2^n}{\pi^n}\sum_{j=1}^k P_j(\zeta) T(\Fc^{-1}_F (\psi \abs{\Pfaff(\,\cdot\,)}), Q_j, \Phi(\zeta))(x)
	\end{split}
	\]
	for every $(\zeta,x)\in \Nc$, so that the estimates of Lemma~\ref{lem:6} imply that $\phi \in \Sc(\Nc)$. Therefore $\phi \in \widetilde \Sc_{\overline{\Lambda_+}}(\Nc)$, since $\Fc_\Nc(\phi)= \psi$ by Propositions~\ref{prop:1} and~\ref{prop:2}.
	Since both $\widetilde \Sc_{\overline{\Lambda_+}}(\Nc)$ and $\Sc(F',\overline{\Lambda_+})$ are Fréchet spaces, by means of the closed graph theorem it is readily verified that $\Fc_\Nc$ is an isomorphism of Fréchet spaces.
	
	{(3)} Take $\varphi_1,\varphi_2\in \widetilde\Sc_K(\Nc)$, and observe that $\pi_\lambda(\varphi_1*\varphi_2)=\pi_\lambda(\varphi_1)\pi_\lambda(\varphi_2)$ for every $\lambda\in F'$. Since $\pi_\lambda(\varphi_j)= \Fc_\Nc(\varphi_j) P_{\lambda,0}$ for every $\lambda\in F'\setminus W$ and for every $j=1,2$, this implies that $\varphi_1*\varphi_2\in \widetilde\Sc_{K}(\Nc)$ and that
	\[
	\Fc_\Nc(\varphi_1*\varphi_2)=(\Fc_\Nc \varphi_1)(\Fc_\Nc \varphi_2),
	\]
	as we wished to show. 
	
	(4) This follows from (2) and from the facts that $\Sc(F',\overline{\Lambda_+})$ is a convolution algebra and that multiplication by $\abs{\Pfaff(\,\cdot\,)}$ induce an automorphism of $\Sc(F',\overline{\Lambda_+})$ (whose inverse is given by multiplication by $\abs{\Pfaff(\,\cdot\,)}^{-1}$), since $\abs{\Pfaff(\,\cdot\,)}$ is polynomial and vanishes nowhere on $\Lambda_+$ (cf.~Lemma~\ref{lem:4}).
\end{proof}

We now present some applications.
Recall that $\Pc=\Set{\lambda\in F'\colon \langle \lambda, \Phi\rangle\Meg 0}=\Phi(E)^\circ$.

\begin{prop}\label{prop:6}
	Let $K$ be a compact convex subset of $F'$. Then, $\Oc_K(\Nc)=\Oc_{K\cap \Pc}(\Nc)$.
\end{prop}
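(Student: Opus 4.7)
The inclusion $\Oc_{K\cap\Pc}(\Nc)\subseteq\Oc_K(\Nc)$ is immediate. For the reverse, take $f\in\Oc_K(\Nc)$ and set $\varphi:=(I\otimes\Fc_F)(f\cdot\Hc^{2n+m})$: by Theorem~\ref{teo:2} this is a tempered distribution on $E\times F'$, supported in $E\times K$ and smooth in $\zeta$. The plan is to show directly that its support lies in $E\times(K\cap\Pc)$.

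First I would encode the CR condition $\overline{Z_v}f_0=0$ for $v\in E$ by partial Fourier transform in $F$: since $\overline{Z_v}=\overline{\partial_{E,v}}-i\Phi(\zeta,v)\cdot\partial_F$, this becomes the transport equation
\[\overline{\partial_{E,v}}\varphi(\zeta,\lambda)+\langle\lambda_\C,\Phi(\zeta,v)\rangle\varphi(\zeta,\lambda)=0.\]
Setting $\tilde\varphi(\zeta,\lambda):=\ee^{\langle\lambda,\Phi(\zeta)\rangle}\varphi(\zeta,\lambda)$ and using the Wirtinger identity $\overline{\partial_{E,v}}\Phi(\zeta,\zeta)=\Phi(\zeta,v)$, a direct computation yields $\overline{\partial_{E,v}}\tilde\varphi=0$. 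In particular, for every $\chi\in C^\infty_c(F')$, the function
\[\tilde\varphi_\chi(\zeta):=\langle\tilde\varphi(\zeta,\cdot),\chi\rangle=\int_F f_0(\zeta,x)\,\Fc_F\chi(x+i\Phi(\zeta))\,\dd x\]
is entire on $E$; the integral representation follows since multiplication by $\ee^{\langle\cdot,\Phi(\zeta)\rangle}$ in $\lambda$ corresponds to the complex shift $x\mapsto x+i\Phi(\zeta)$ in the Fourier dual.

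The crux is a Liouville-type estimate. Fix $\lambda_0\in F'\setminus\Pc$ and choose $\zeta_0\in E$ with $\langle\lambda_0,\Phi(\zeta_0)\rangle<0$; take $\chi$ supported in a compact neighborhood $L\subseteq F'\setminus\Pc$ of $\lambda_0$ small enough that $\max_{\lambda\in L}\langle\lambda,\Phi(\zeta_0)\rangle\meg-c<0$. Combining the Paley--Wiener bound $\abs{\Fc_F\chi(x+iy)}\meg C_N(1+\abs{x})^{-N}\ee^{\sup_{\lambda\in L}\langle\lambda,y\rangle}$, the polynomial growth of $f_0$, and the homogeneity $\Phi(t\zeta_0)=\abs{t}^2\Phi(\zeta_0)$ for $t\in\C$ yields
\[\abs{\tilde\varphi_\chi(t\zeta_0)}\meg C(1+\abs{t})^k\ee^{-c\abs{t}^2},\qquad t\in\C.\]
Multiplying the entire function $h(t):=\tilde\varphi_\chi(t\zeta_0)$ by $\ee^{ct^2/2}$ produces a bounded entire function tending to $0$ at infinity, so $h\equiv 0$ by Liouville's theorem.

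Finally, the set $U_L:=\{\zeta\in E:\max_{\lambda\in L}\langle\lambda,\Phi(\zeta)\rangle<0\}$ is open and contains $\zeta_0$, and the same argument applies with any $\zeta\in U_L$ in place of $\zeta_0$. Thus $\tilde\varphi_\chi$ vanishes on $U_L$, hence on all of $E$ by the identity theorem for holomorphic functions on the complex vector space $E$. Varying $\chi$ among functions supported in neighborhoods of $\lambda_0$, we find that $\varphi(\zeta,\cdot)$ vanishes near $\lambda_0$ for every $\zeta$; since $\lambda_0\in F'\setminus\Pc$ is arbitrary, $\varphi$ is supported in $E\times(K\cap\Pc)$, giving the desired conclusion. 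I expect the main obstacle to be the careful verification of the integral representation of $\tilde\varphi_\chi$ and the uniform bookkeeping in the estimate above, in particular reconciling the polynomial growth of $f_0$ with the Paley--Wiener exponential decay and ensuring that the holomorphy obtained distributionally in the second step justifies the pointwise decay on the ray used in the Liouville step.
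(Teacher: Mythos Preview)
Your argument is correct and follows a genuinely different route from the paper's proof.

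The paper proceeds in two steps. First, assuming $\Lambda_+\neq\emptyset$, it multiplies $\phi\in\Oc_K(\Nc)$ by dilates $\psi_j$ of a fixed element of $\widetilde\Sc_{\overline{\Lambda_+}}(\Nc)$ (built via $\Fc_\Nc^{-1}$ from Proposition~\ref{prop:40}) so as to land in $L^2(\Nc)$; it then invokes the representation-theoretic characterization of CR $L^2$ functions (Proposition~\ref{prop:3}) to force the partial Fourier support into $\Pc=\overline{\Lambda_+}$, and lets $j\to\infty$. Second, in the general case it slices: for each $v\in E$ it restricts to the Heisenberg-type subgroup $\Nc_v=\C v\times\R\Phi(v)$, where $\Lambda_+(\Nc_v)\neq\emptyset$, and applies Step~I there.

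Your approach is more elementary and self-contained: you never touch the group Fourier transform or the $L^2$ Plancherel theory, you need no case distinction on $\Lambda_+$, and you do not rely on Proposition~\ref{prop:40}. Instead you extract directly from the CR condition the transport equation for $\varphi=(I\otimes\Fc_F)f$, pass to the integrating factor $\tilde\varphi=\ee^{\langle\lambda,\Phi(\zeta)\rangle}\varphi$, and reduce to a one-complex-variable Liouville argument on the line $t\mapsto t\zeta_0$. The Liouville step is clean: since $\Re(t^2)\meg\abs{t}^2$, the product $h(t)\ee^{c t^2/2}$ is indeed bounded and vanishes at infinity. The subsequent use of the identity theorem on $E$ is legitimate because $\tilde\varphi_\chi$ is genuinely holomorphic (the distributional $\overline\partial$-vanishing upgrades to classical holomorphy, and your integral formula gives a continuous, hence smooth, representative). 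What your method buys is independence from the non-commutative machinery; what the paper's method buys is economy within its own framework, recycling Propositions~\ref{prop:3} and~\ref{prop:40}. Interestingly, both proofs ultimately localize to a single complex direction in $E$, but yours does so analytically (via the ray $t\zeta_0$) while the paper does so group-theoretically (via the subgroup $\Nc_v$).
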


\begin{proof}
	\textsc{Step I.}  Assume first that $\Lambda_+\neq \emptyset$, so that $\Pc=\overline{\Lambda_+}$ by Proposition~\ref{prop:7}. Take $\phi \in \Oc_K(\Nc)$.
	Then, take $\eta\in C^\infty_c(\Lambda_+\cap B_{F'}(0,1))$, and define $\psi\coloneqq \Fc_\Nc(\eta)$ (cf.~Proposition~\ref{prop:6}).
	Define $\psi_j\coloneqq \psi(2^{-j}\,\cdot\,)$ for every $j\in \N$, so that $\psi_j\in \Oc_{\overline B_{F'}(0,2^{-j})}(\Nc)$ for every $j \in \N$ and $\psi_j\to \psi(0,0)$ boundedly and locally uniformly. Notice that we may assume that $\psi(0,0)=1$.
	
	Observe that $ \phi \psi_j\in L^2(\Nc)\cap \Oc_{K+ \overline B_{F'}(0, 2^{-j})}(\Nc)$ , so that $\Fc_F((\phi \psi_j)(\zeta,\,\cdot\,))$ is supported in $\Pc$ for every $\zeta\in E$ and for ever $j\in \N$, by Proposition~\ref{prop:3}.
	Hence, $\Fc_F(\psi(\zeta,\,\cdot\,))$ is supported in $\Pc$ for every $\zeta\in E$, so that $\phi\in \Oc_{K\cap \Pc}(\Nc)$.
	
	\textsc{Step II.} Now, consider the general case. Take $\phi \in \Oc_K(\Nc)$ and define $K'\coloneqq \overline{\bigcup_{\zeta\in E} \Supp{\Fc_F[\phi(\zeta,\,\cdot\,)]}}$. Observe that  it will suffice to show that $K'\subseteq \Set{\lambda\in F'\colon \langle \lambda , \Phi(v)\rangle \Meg 0}$ for every $v\in E$. 
	Then, take $v\in E$, and observe  $\Phi(v)=\Phi(v+v')$ for every $v'\in \Rc$, so that we may assume that $v\in \Rc^\perp$ and that $v\neq 0$, for otherwise the assertion is trivial. Then, define $\Nc_v\coloneqq \C v\times \R \Phi(v)$, so that $\Nc_v$ is a subgroup of $\Nc$ and $\phi\in \Oc_{K'_v}(\Nc)$, where $K'_v$ is the orthogonal projection of $K'$ into $(\R \Phi(v))^{\circ \perp}$, canonically identified with the dual of $\R \Phi(v)$ (cf.~Lemma~\ref{lem:1}). Then,~\textsc{step I} implies that $K'_v\subseteq \Set{\lambda\in (\R \Phi(v))^{\circ \perp}\colon \langle \lambda , \Phi(v)\rangle \Meg 0}$, so that $K'\subseteq  \Set{\lambda\in F'\colon \langle \lambda , \Phi(v)\rangle \Meg 0}$. The assertion follows.
\end{proof}

Now, let $K$ be a compact convex subset of $\Pc$.
We shall now describe further the structure of $\Oc_K(\Nc)$, reducing to the case in which $K\cap\Lambda_+$ has a non-empty interior (when $K\neq \emptyset$).
Define $F_{K,1}$ as the polar of the vector space generated by $K$. Notice that, when $K$ has a non-empty interior in $\Pc$, $F_{K,1}$ is the largest vector subspace of $F$ contained in the closed convex envelope of $\Phi(E)$, but in general may be larger. In addition, set
\[
E_{K,1}\coloneqq \Set{\zeta\in E\colon \forall \zeta'\in E\:\: \Phi(\zeta,\zeta')\in (F_{K,1})_\C},
\]
and observe that $\Nc_{K,1}\coloneqq E_{K,1}\times F_{K,1}$ is a normal subgroup of $\Nc$, as well as a quadratic CR manifold.  
Define $E_{K,2}\coloneqq E_{K,1}^\perp$, $F_{K,2}\coloneqq F_{K,2}^\perp$, and $\Phi_{K,2}\colon E_{K,2}\times E_{K,2}\to (F_{K,2})_\C$ so that $\Phi_{K,2}(\zeta,\zeta')-\Phi(\zeta,\zeta')\in (F_{K,1})_\C$ for every $\zeta,\zeta'\in E_{K,2}$. Then, $\Nc_{K,2}\coloneqq E_{K,2}\times F_{K,2}$ is a quadratic CR manifold and the canonical mapping $\Nc_{K,2}\to \Nc/\Nc_{K,1}$ (induced by the canonical projection) is an isomorphism of Lie groups. 
Observe that we may identify $F_{K,1}^\circ$ with the dual of $F_{K,2}$. With this identification, $\Pc\cap F_{K,1}^\circ$ becomes the polar of $\Phi_{K,2}(E_{K,2})$, since clearly
\[
\langle \lambda, \Phi(\zeta)\rangle=\langle \lambda, \Phi_{K,2}(\zeta')\rangle
\]
for every $\zeta\in E$, for every $\lambda\in F_{K,1}^\circ$, where $\zeta'\in E_{K,2}$ and $\zeta-\zeta'\in E_{K,1}$. In addition, $\Phi_{K,2}$ is non-degenerate. Therefore, $\Lambda_+(\Nc_{K,2})$ is the interior of $\Pc\cap F_{K,1}^\circ$ in $F_{K,1}^\circ$ by Proposition~\ref{prop:7}, so that the interior of $\Lambda_+(\Nc_{K,2})\cap K$ in $F_{K,1}^\circ$ is the interior of $K$ in $F_{K,1}^\circ$, and is therefore non-empty (unless $K=\emptyset$).

\begin{prop}\label{prop:10}
	Keep the preceding notation, and denote by $\Pol_{\CR}(\Nc_{K,1}; \Oc_K(\Nc_{K,2}))$ the space of CR polynomial mappings from $\Nc_{K,1}$ into $\Oc_K(\Nc_{K,2})$. Then, the mapping $\iota\colon \Pol_{\CR}(\Nc_{K,1};\Oc_K(\Nc_{K,2}))\to \Oc_K(\Nc)$ defined by
	\[
	\iota(P)(\zeta_1+\zeta_2,x_1+x_2)\coloneqq P_{\Phi(\zeta_1+\zeta_2)-\Phi(\zeta_1)-\Phi_{K,2}(\zeta_2)}(\zeta_1,x_1)(\zeta_2,x_2)
	\]
	for every $P\in \Pol_{\CR}(\Nc_{K,1};\Oc_K(\Nc_{K,2}))$ and for every $(\zeta_j,x_j)\in \Nc_{K,j}$, $j=1,2$, is an isomorphism.\footnote{If $Q$ is the holomorphic polynomial on $E_{K,1}\times (F_{K,1})_\C$ such that $Q_0=P$, we define $P_h\coloneqq Q_h$ for every $h\in F_{K,1}$. Since $\Phi(\zeta_1+\zeta_2)-\Phi(\zeta_1)-\Phi_{K,2}(\zeta_2)= 2 \Re \Phi(\zeta_1,\zeta_2)+(\Phi(\zeta_2)-\Phi_{K,2}(\zeta_2))\in F_{K,1}$ for every $\zeta_1\in E_{K,1}$ and for every $\zeta_2\in E_{K,2}$, $\iota(P)$ is well defined.}
\end{prop}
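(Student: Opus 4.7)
The plan is to construct an inverse to $\iota$ by holomorphically decomposing the entire extension furnished by Theorem~\ref{teo:2} into its $(\zeta_1,z_1)$-polynomial part and its $(\zeta_2,z_2)$-part.

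For the forward direction (well-definedness of $\iota$), given $P$ with holomorphic polynomial extension $Q\colon E_{K,1}\times (F_{K,1})_\C\to \Oc_K(\Nc_{K,2})$, define
\[
G(\zeta_1+\zeta_2,z_1+z_2)\coloneqq \widetilde{Q(\zeta_1,z_1)}(\zeta_2,z_2),
\]
where $\widetilde{(\cdot)}$ denotes the entire extension from $\Oc_K(\Nc_{K,2})$ to $E_{K,2}\times (F_{K,2})_\C$ provided by Theorem~\ref{teo:2}. Joint entireness of $G$ follows from polynomiality of $Q$ in $(\zeta_1,z_1)$ together with entireness of each $\widetilde{Q(\zeta_1,z_1)}$. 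The key observation for the growth estimate is that $H_K$ vanishes on $F_{K,1}$ (since $K\subseteq F_{K,1}^\circ$), which together with $\Phi(\zeta_1+\zeta_2)=\Phi_{K,1}(\zeta_1)+\Phi_{K,2}(\zeta_2)+h_{12}$ with $h_{12}\in F_{K,1}$ yields $H_K(\rho(\zeta,z))=H_K(\Im z_2-\Phi_{K,2}(\zeta_2))$; combining this with the polynomial dependence of the $\Oc_K(\Nc_{K,2})$-seminorms of $Q(\zeta_1,z_1)$ on $(\zeta_1,z_1)$ gives the required bound $|G(\zeta,z)|\meg C(1+|\zeta|^2+|z|)^N \ee^{H_K(\rho(\zeta,z))}$. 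Applying Theorem~\ref{teo:2} yields $G_0\in \Oc_K(\Nc)$, and a direct substitution shows $G_0=\iota(P)$.

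For the inverse, given $f\in \Oc_K(\Nc)$ with entire extension $\tilde f$, the goal is to show $\tilde f$ is polynomial in $(\zeta_1,z_1)\in E_{K,1}\times (F_{K,1})_\C$ with coefficients entire in $(\zeta_2,z_2)$ and belonging (as functions of $(\zeta_2,\cdot)$) to $\Oc_K(\Nc_{K,2})$. Since $(I\otimes \Fc_F)u$ is a finite-order distribution on $E\times F'$ supported in $E\times K\subseteq E\times F_{K,2}'$, the structure theorem for distributions supported in a subspace decomposes
\[
\Fc_F[f(\zeta,\,\cdot\,)]=\sum_{|\alpha|\meg N}(\partial^\alpha_{\lambda_1}\delta_0)(\lambda_1)\otimes \nu_\alpha(\zeta,\lambda_2),
\]
with each $\nu_\alpha(\zeta,\,\cdot\,)$ supported in $K$. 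Fourier-inverting,
\[
\tilde f(\zeta,z)=\sum_{|\alpha|\meg N}(iz_1)^\alpha \tilde g_\alpha(\zeta,z_2),
\]
with $\tilde g_\alpha$ entire in $(\zeta,z_2)$. Cauchy estimates along the $z_1$-direction (using that $H_K\circ\rho$ does not depend on $z_1$) together with the global growth of $\tilde f$ give $|\tilde g_\alpha(\zeta,z_2)|\meg C_\alpha(1+|\zeta|^2+|z_2|)^{N'}\ee^{H_K(\Im z_2-\Phi_{K,2}(\zeta_2))}$. In particular, for fixed $\zeta_2,z_2$, the entire function $\zeta_1\mapsto \tilde g_\alpha(\zeta_1+\zeta_2,z_2)$ on $E_{K,1}$ has polynomial growth, hence is a polynomial. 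Consequently $\tilde f$ is polynomial in $(\zeta_1,z_1)$ with coefficients in $\Oc_K(\Nc_{K,2})$. Defining $Q(\zeta_1,z_1)(\zeta_2,x_2)\coloneqq \tilde f(\zeta_1+\zeta_2,z_1+x_2+i\Phi_{K,2}(\zeta_2))$ and $P\coloneqq Q_0$ produces the required preimage, and $\iota(P)=f$ follows by inverting the computation used in the forward direction.

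Bijectivity follows from the two constructions being inverse to each other, using the uniqueness of the polynomial extension $Q$ of $P$ and of the entire extension of $f$. Continuity in both directions follows from the seminorm estimates carried out above, or by invoking the closed graph theorem once the natural Fréchet topologies on both sides are identified. The main obstacle will be the inverse construction: the $z_1$-polynomial decomposition requires the jet structure of compactly supported distributions along a subspace, and the $\zeta_1$-polynomial conclusion requires extracting uniform polynomial growth bounds on each $\tilde g_\alpha$ in the $E_{K,1}$-direction from the global estimate on $\tilde f$, which in turn rests on the crucial vanishing of $H_K$ on $F_{K,1}$.
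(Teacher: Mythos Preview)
Your proposal is correct and the forward direction (well-definedness of $\iota$) matches the paper's argument essentially verbatim: both build the entire extension $G$ from the polynomial $Q$ and the extension operator on $\Oc_K(\Nc_{K,2})$, and both use the key identity $H_K(\rho(\zeta_1+\zeta_2,z_1+z_2))=H_K(\rho(\zeta_2,z_2))$ coming from $K\subseteq F_{K,1}^\circ$.

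For the inverse direction, however, you take a genuinely different route. The paper simply observes that the global estimate from Theorem~\ref{teo:2}, combined with the identity above, shows that for each fixed $(\zeta_2,z_2)$ the entire function $(\zeta_1,z_1)\mapsto \tilde f(\zeta_1+\zeta_2,z_1+z_2)$ has polynomial growth on all of $E_{K,1}\times (F_{K,1})_\C$, and hence is a holomorphic polynomial by Liouville in one stroke. You instead obtain polynomiality in $z_1$ first, via the structure theorem for distributions supported in the subspace $\{\lambda_1=0\}\cong F_{K,1}^\circ$ (writing $\Fc_F[f(\zeta,\cdot)]$ as a finite jet in the transverse $\lambda_1$-direction), and only then invoke Liouville separately in $\zeta_1$ on each coefficient $\tilde g_\alpha$. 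This works, but it imports an extra structural ingredient and requires tracking uniformity of the order $N$ in $\zeta$ and joint regularity of the $\nu_\alpha$; the paper's single Liouville step sidesteps all of that. Conversely, your Fourier-side decomposition makes the polynomial structure in $z_1$ explicit and ties it visibly to the support condition, which has some expository value. Finally, note that the paper proves only bijectivity and does not address topological isomorphism; your closing remark about the closed graph theorem goes slightly beyond what is asserted.
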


Notice that $\Oc_K(\Nc_{K,2})$ contains the non-trivial space $\widetilde \Sc_K(\Nc_{K,2})$, so that the preceding result cannot be improved.

\begin{proof}
	Take $\phi\in \Oc_K(\Nc)$, and denote by $f$ the unique element of $\Hol(E\times F_\C)$ such that $f_0=\phi$ (cf.~Theorem~\ref{teo:2}), so that there are $N\in \N$ and a constant $C>0$ such that
	\[
	\abs{f(\zeta,z)}\meg C(1+\abs{\zeta}^2+\abs{z})^N \ee^{H_K(\rho(\zeta,z))}
	\]
	for every $(\zeta,z)\in E\times F_\C$. Observe that, if $(\zeta_j,z_j)\in E_{K,j}\times (F_{K,j})_\C$, $j=1,2$, then
	\[
	\langle \lambda,\Im z_1+\Im z_2-\Phi(\zeta_1+\zeta_2)\rangle=\langle \lambda, \Im z_2-\Phi_{K,2}(\zeta_2)\rangle
	\]
	for every $\lambda\in F_{K,1}^\circ$, in particular for every $\lambda\in K$, so that	
	\[
	H_K(\rho(\zeta_1+\zeta_2,z_1+z_2))=H_K(\rho(\zeta_2,z_2)).
	\]
	Therefore,  the preceding estimates imply that the mapping
	\[
	E_{K,1}\times (F_{K,1})_\C\ni (\zeta_1,z_1)\mapsto f(\zeta_1+\zeta_2,z_1+z_2)\in \C
	\]
	is a holomorphic polinomial for every $(\zeta_2,z_2)\in E_{K,2}\times (F_{K,2})_\C$. In particular, the function
	\[
	P\colon \Nc_{K,1}\ni (\zeta_1,x_1)\mapsto [\Nc_{K,2} \ni (\zeta_2,x_2)\mapsto f(\zeta_1+\zeta_2,x_1+i\Phi(\zeta_1)+x_2+i\Phi_{K,2}(\zeta_2) )]
	\]
	belongs to $\Pol_{\CR}(\Nc_{K,2}; \Oc_K(\Nc_{K,2}))$, and $\iota(P)=\phi$.
	
	Conversely, take $P\in \Pol_{\CR}(\Nc_{K,2};\Oc_K(\Nc_{K,2}))$, and let us prove that $\iota(P)\in \Oc_K(\Nc)$. Let 
	\[
	\Es_{K,2}\colon\Oc_K(\Nc_{K,2})\to \Hol(E_{K,2}\times (F_{K,2})_\C)
	\]
	be the unique operator such that $(\Es_{K,2} \phi)_0=\phi$ for every $\phi \in \Oc_K(\Nc_{K,2})$. Let $Q$ be the unique holomorphic polynomial mapping $E\times (F_{K,1})_\C\to \Oc_K(\Nc_{K,2})$ such that $Q_0=P$. Define $f\in E\times F_\C\to \C$ so that
	\[
	f(\zeta_1+\zeta_2,z_1+z_2)\coloneqq \Es_{K,2}(Q(\zeta_1,z_1))(\zeta_2,z_2)
	\]
	for every $(\zeta_j,z_j)\in E_{K,j}\times (F_{K,j})_\C$, $j=1,2$, and observe that $f$ is holomorphic, since it is separately holomorphic in $(\zeta_1,z_1)$ and in $(\zeta_2,z_2)$. In addition, $f_0=\iota(P)$. Further, since $P$ is polynomial, there is $N_1\in \N$ so that the set of the $(1+\abs{\zeta_1}^2+\abs{x_2})^{-N_1} P(\zeta_1,x_2)$, as $(\zeta_1,x_1)$ runs through $\Nc_{K,1}$, is bounded in $\Oc_{K}(\Nc_{K,2})$.\footnote{For the topology induced by $\Sc'(\Nc_{K,2})$, for example. This topology is then naturally promoted to a much stronger one.} Therefore, Theorem~\ref{teo:2} implies that there are  $N_2\in \N$ and a constant $C>0$ such that
	\[
	\abs{f(\zeta_1+\zeta_2,z_1+z_2)}\meg C(1+\abs{\zeta_1}^2+\abs{z_1})^{N_1}  (1+\abs{\zeta_2}^2+\abs{z_2})^{N_2} \ee^{H_K(\rho(\zeta_2,z_2))}
	\] 
	for every $(\zeta_j,z_j)\in E_{K,j}\times (F_{K,j})_\C$, $j=1,2$. Since $H_K(\rho(\zeta_2,z_2))=H_K(\rho(\zeta_1+\zeta_2,z_1+z_2))$ by the above computations, this implies that $\iota(P)=f_0\in \Oc_K(\Nc)$, thanks to Theorem~\ref{teo:2}.
\end{proof}

\begin{cor}\label{cor:2}
	Let $K$ be a compact convex subset of $F'$ and take $p\in [1,\infty)$. Then, $\Oc_K(\Nc)\cap L^p(\Nc)\neq \Set{0}$ (resp.\ $\Oc_K(\Nc)\cap C_0(\Nc)\neq \Set{0}$) if and only if $K\cap \Lambda_+$ has a non-empty interior (equivalently, if and only if $\Phi$ is non-degenerate and $K\cap \Pc$ has a non-empty interior).
\end{cor}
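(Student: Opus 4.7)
The strategy is to prove both directions separately: sufficiency uses Proposition~\ref{prop:40} to produce an explicit non-zero Schwartz function in $\Oc_K(\Nc)$, while necessity combines Proposition~\ref{prop:6} with two short analytic arguments ruling out obstructions.

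\emph{Sufficiency.} Assume $K\cap \Lambda_+$ has non-empty interior. Choose a non-zero $\psi \in C^\infty_c(K\cap \Lambda_+) \subseteq \Sc(F', K\cap \overline{\Lambda_+})$ and set $\phi\coloneqq \Fc_\Nc^{-1}(\psi)$; by Proposition~\ref{prop:40}(1), $\phi \in \Sc_{K\cap \overline{\Lambda_+}}(\Nc)\setminus\{0\}$. The compactness of $K\cap \overline{\Lambda_+}$ together with the observation following the definition of $\Sc_K(\Nc)$ yields $\Sc_{K\cap \overline{\Lambda_+}}(\Nc) = \Sc(\Nc)\cap \Oc_{K\cap \overline{\Lambda_+}}(\Nc) \subseteq \Sc(\Nc)\cap \Oc_K(\Nc)$, the inclusion coming from Proposition~\ref{prop:6}. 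Hence $\phi$ is a non-zero element of $\Sc(\Nc) \cap \Oc_K(\Nc) \subseteq L^p(\Nc)\cap C_0(\Nc)\cap \Oc_K(\Nc)$.

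\emph{Necessity.} Take a non-zero $f \in \Oc_K(\Nc)$ lying in $L^p(\Nc)$ (resp.\ in $C_0(\Nc)$). By Proposition~\ref{prop:6}, we may assume $K \subseteq \Pc$. I plan to rule out two obstructions. \emph{(a) $\Phi$ must be non-degenerate:} if $\Rc \neq \{0\}$, then for $v\in \Rc$ the identity $\Phi(v,\,\cdot\,)=0$ gives $\overline{Z_v}=\overline{\partial_{E,v}}$, so $f$ is holomorphic in $\zeta_1 \in \Rc$ for each fixed $(\zeta_2, x) \in \Rc^\perp \times F$. By Fubini, for almost every such $(\zeta_2, x)$ the entire function $\zeta_1 \mapsto f(\zeta_1+\zeta_2, x)$ on $\Rc \cong \C^{\dim \Rc}$ lies in $L^p$ (resp.\ $C_0$), hence vanishes --- by plurisubharmonicity of $|g|^p$ combined with a finite integral over $\C^{\dim \Rc}$ (resp.\ by Liouville's theorem for bounded entire functions vanishing at infinity) --- so $f \equiv 0$, a contradiction. \emph{(b) $K$ must have non-empty interior in $F'$:} otherwise $K$ is contained in a proper affine subspace $A = \lambda_0 + V \subsetneq F'$ and $W \coloneqq V^\perp \subseteq F$ is non-trivial. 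Since $\Fc_F(f(\zeta,\,\cdot\,))$ is a compactly supported tempered distribution with support in $A$, the structure theorem for distributions supported in an affine subspace together with Fourier inversion yields $f(\zeta, x) = e^{i\langle \lambda_0, x\rangle} P_\zeta(x)$, where $P_\zeta$ is a polynomial of bounded degree in the $W$-component of $x$ (with coefficients depending on the $V$-component). Since $W \neq \{0\}$ and non-zero polynomials on $W$ lie in neither $L^p(W)$ nor $C_0(W)$, Fubini forces $P_\zeta \equiv 0$ for almost every $\zeta$, hence $f \equiv 0$, a contradiction.

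Combining (a) and (b), $\Phi$ is non-degenerate and $K$ has non-empty interior $U$ in $F'$. Since $U \subseteq K \subseteq \Pc$ is open and $\Lambda_+ = \Int \Pc$ by Proposition~\ref{prop:7}, one has $\emptyset \neq U \subseteq K \cap \Lambda_+$ open in $F'$, proving that $K\cap \Lambda_+$ has non-empty interior. The parenthetical equivalence follows from the same identification $\Lambda_+ = \Int \Pc$ when $\Phi$ is non-degenerate, together with $\Lambda_+ = \emptyset$ when $\Phi$ is degenerate (Proposition~\ref{prop:7}). The main subtlety lies in step \emph{(b)}: one must invoke the structure theorem for compactly supported tempered distributions on an affine subspace carefully in order to pass from the Fourier support condition to the explicit polynomial form of $f$ in the transverse direction, and then verify that a non-zero polynomial on a non-trivial Euclidean space is incompatible with both $L^p$- and $C_0$-integrability.
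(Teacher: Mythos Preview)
Your proof is correct. The sufficiency direction is essentially the paper's argument (the paper simply observes that $\widetilde\Sc_K(\Nc)\subseteq\Oc_K(\Nc)\cap\Sc(\Nc)$ is non-zero by Proposition~\ref{prop:40}); note only that Proposition~\ref{prop:40}(1) concerns $\widetilde\Sc_K(\Nc)$ rather than $\Sc_K(\Nc)$, but since the explicit formula in Proposition~\ref{prop:40}(2) makes $\widetilde\Sc_K(\Nc)\subseteq\Oc_K(\Nc)$ immediate, this is inconsequential.

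For necessity you take a genuinely different route. The paper applies Proposition~\ref{prop:10} in one stroke: when $K\cap\Lambda_+$ has empty interior one has $\Nc_{K,1}\neq\{(0,0)\}$, so every $\phi\in\Oc_K(\Nc)$ is a CR polynomial along $\Nc_{K,1}$ and hence cannot lie in $L^p$ or $C_0$ unless it vanishes. You instead split the obstruction into two elementary pieces, (a) degeneracy of $\Phi$ (handled by entire-function arguments along $\Rc$) and (b) $K\cap\Pc$ lying in a proper affine subspace (handled by the structure theorem for compactly supported distributions on a subspace, yielding a polynomial in the transverse variable). This is more hands-on and avoids the full structure theorem of Proposition~\ref{prop:10}; in effect your cases (a) and (b) recover, respectively, the $E_{K,1}$- and $F_{K,1}$-parts of $\Nc_{K,1}$, so your argument rediscovers just the portion of Proposition~\ref{prop:10} needed here. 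The paper's approach is shorter and more unified once Proposition~\ref{prop:10} is available; yours is self-contained and would stand even without that proposition.
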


Here, $C_0(\Nc)$ denotes the set of continuous functions on $\Nc$ which vanish at the point at infinity. 

\begin{proof}
	The assertion follows from Proposition~\ref{prop:10} if $K\cap\Lambda_+$ has an empty interior, since in this case $\Nc_{K,1}\neq \Set{(0,0)}$ and no non-zero polynomial on $\Nc_{K,1}$ belongs to $L^p$ or $C_0$.

	Conversely, assume that $K\cap \Lambda_+$ has a non-empty interior, so that, in particular, $\Lambda_+\neq \emptyset$. Then, $\Oc_K(\Nc)\cap L^p(\Nc)$ and $\Oc_K(\Nc)\cap C_0(\Nc)$ contain $\widetilde \Sc_K(\Nc)$, which is non-zero by Proposition~\ref{prop:40}.
\end{proof}

\begin{cor}
	Let $K$ be a compact subset of $\Pc$. Then, the mapping $\iota\colon \Oc_K(\Nc_{K,2})\cap L^\infty(\Nc_{K,2})\to \Oc_K(\Nc)\cap L^\infty(\Nc)$ defined by
	\[
	\iota(\phi)(\zeta_1+\zeta_2,x_1+x_2)\coloneqq \phi(\zeta_2,x_2)
	\]
	for every $(\zeta_j,x_j)\in \Nc_{K,j}$, $j=1,2$, is an isomorphism.
\end{cor}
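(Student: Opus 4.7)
My plan is to show that $\iota$ is a linear $L^\infty$-isometric bijection: well-definedness and injectivity will follow quickly from Proposition~\ref{prop:10}, while surjectivity requires an invariance argument.

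For $\phi\in\Oc_K(\Nc_{K,2})\cap L^\infty(\Nc_{K,2})$, I consider the constant mapping $P\equiv\phi$ in $\Pol_{\CR}(\Nc_{K,1};\Oc_K(\Nc_{K,2}))$; its associated holomorphic polynomial $Q$ is constant as well, so $Q_h=\phi$ for every $h\in F_{K,1}$, and the formula of the present corollary agrees with that of Proposition~\ref{prop:10} applied to $P$. Hence $\iota(\phi)\in\Oc_K(\Nc)$. The identity $\norm{\iota(\phi)}_\infty=\norm{\phi}_\infty$ is immediate, and evaluating at $(\zeta_1,x_1)=(0,0)\in\Nc_{K,1}$ gives injectivity.

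Given $\psi\in\Oc_K(\Nc)\cap L^\infty(\Nc)$, surjectivity reduces to showing that $\psi$ is left-invariant under $\Nc_{K,1}$. Since $\Im\Phi(\zeta_1',\zeta)\in F_{K,1}$ whenever $\zeta_1'\in E_{K,1}$ (by the definition of $E_{K,1}$), it will suffice to prove separately an $F_{K,1}$-translation invariance of $\psi$ in $x$ and an $E_{K,1}$-translation invariance in $\zeta$. The first is a Fourier-theoretic consequence of the fact that $\Fc_F(\psi(\zeta,\,\cdot\,))$ is supported in $K$: by the very definition of $F_{K,1}$ as the polar of the subspace spanned by $K$, one has $K\subseteq F_{K,1}^\circ$ (the annihilator of $F_{K,1}$ in $F'$), so $F_{K,1}$-translation invariance follows. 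For the second, I take $v\in E_{K,1}$ and use that $\Phi(\zeta,v)\in(F_{K,1})_\C$ by the defining property of $E_{K,1}$; the CR condition $\overline{Z_v}\psi=\overline{\partial_{E,v}}\psi-i\,\partial_{\Phi(\zeta,v)}\psi=0$, combined with the $F_{K,1}$-invariance just established, collapses to $\overline{\partial_{E,v}}\psi=0$ for every $v\in E_{K,1}$. Thus $\psi$ is entire in $\zeta_1\in E_{K,1}$ for each fixed $(\zeta_2,x)$, and since $\psi$ is bounded, Liouville's theorem makes it constant in $\zeta_1$.

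Setting $\phi\coloneqq\psi|_{\Nc_{K,2}}$ then gives $\iota(\phi)=\psi$ and $\phi\in L^\infty(\Nc_{K,2})$. To verify that $\phi$ lies in $\Oc_K(\Nc_{K,2})$, I will invoke Theorem~\ref{teo:2}: the holomorphic extension $f$ of $\psi$ inherits, by analytic continuation from the invariances established on the real slice $\cM$, translation invariance under $E_{K,1}\times(F_{K,1})_\C$, and hence descends to a holomorphic function $f'$ on $E_{K,2}\times(F_{K,2})_\C$ with $f'_0=\phi$. Since $H_K$ vanishes on $F_{K,1}$ (as $K\subseteq F_{K,1}^\circ$), the Paley--Wiener-type estimate on $f$ restricts to an analogous bound for $f'$, and Theorem~\ref{teo:2} applied on $\Nc_{K,2}$ yields $\phi\in\Oc_K(\Nc_{K,2})$. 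The main obstacle is the $E_{K,1}$-invariance of $\psi$: the $F_{K,1}$-invariance from the Fourier-support step must interact with the CR equation precisely through the defining property of $E_{K,1}$, and only then can Liouville's theorem be applied.
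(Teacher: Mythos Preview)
Your argument is correct. The paper's proof is the one-liner ``the assertion follows from Proposition~\ref{prop:10}, since the only bounded CR polynomials on $\Nc_{K,1}$ are the constant functions'': given $\psi\in\Oc_K(\Nc)\cap L^\infty(\Nc)$, Proposition~\ref{prop:10} produces $P\in\Pol_{\CR}(\Nc_{K,1};\Oc_K(\Nc_{K,2}))$ with $\iota(P)=\psi$, and then for each fixed $(\zeta_2,x_2)$ the evaluation $(\zeta_1,x_1)\mapsto\iota(P)(\zeta_1+\zeta_2,x_1+x_2)$ is a bounded scalar CR polynomial on $\Nc_{K,1}$, hence constant by Liouville; this forces $P$ to be constant, and the value $\phi\coloneqq P(0,0)$ already lies in $\Oc_K(\Nc_{K,2})$ by construction.

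Your route differs in that, for surjectivity, you do not invoke the full isomorphism of Proposition~\ref{prop:10} but instead establish the $\Nc_{K,1}$-invariance of $\psi$ directly: first $F_{K,1}$-invariance from the Fourier support condition $\Supp{\Fc_F(\psi(\zeta,\cdot))}\subseteq K\subseteq F_{K,1}^\circ$, and then $E_{K,1}$-holomorphy (hence constancy, by Liouville) by collapsing $\overline{Z_v}\psi=0$ to $\overline{\partial_{E,v}}\psi=0$ once the $F_{K,1}$-derivatives vanish. This is essentially the same Liouville mechanism, unpacked; what you gain is that you never need to track the twist $P_{\Phi(\zeta_1+\zeta_2)-\Phi(\zeta_1)-\Phi_{K,2}(\zeta_2)}$ in the formula of Proposition~\ref{prop:10}, at the cost of re-verifying $\phi\in\Oc_K(\Nc_{K,2})$ by hand via Theorem~\ref{teo:2}. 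The paper's version is shorter because Proposition~\ref{prop:10} already packages both the polynomial structure in the $\Nc_{K,1}$-variables and the membership $\phi\in\Oc_K(\Nc_{K,2})$.
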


\begin{proof}
	The assertion follows from Proposition~\ref{prop:10}, since the only bounded  CR polynomials on $\Nc_{K,2}$ are the constant functions.
\end{proof}

\begin{prop}\label{prop:41}
	Let $K$ be a closed  subset of $\overline{\Lambda_+}$. Then, the following hold:
	\begin{enumerate}
		\item[\textnormal{(1)}] $ \Sc_{K}(\Nc)$ is a left-invariant convolution algebra;
		
		\item[\textnormal{(2)}] $\Sc_{K_1}(\Nc) \Sc_{K_2}(\Nc)\subseteq  \Sc_{\overline{K_1+K_2}}(\Nc)$ for every two closed  subsets $K_1,K_2$ of $\overline{\Lambda_+}$.
	\end{enumerate}
\end{prop}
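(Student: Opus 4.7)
The plan is to handle the two statements separately: part~(1) follows directly from the defining identity and the behavior of $\phi \mapsto \pi_\lambda(\phi)$ under left translation and convolution, while part~(2) is most transparent via an alternative description of $\Sc_K(\Nc)$ in terms of the CR condition and the support of the central Fourier transform.

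For (1), I would use that $\phi \mapsto \pi_\lambda(\phi)$ sends left translation by $(\zeta_0,x_0)$ to multiplication on the left by $\pi_\lambda(\zeta_0,x_0)$, and convolution to operator composition. Thus for $\phi \in \Sc_K(\Nc)$, left-multiplying the identity $\pi_\lambda(\phi) = \chi_K(\lambda)\, \pi_\lambda(\phi)\, P_{\lambda,0}$ by $\pi_\lambda(\zeta_0,x_0)$ immediately yields the analogous identity for $L_{(\zeta_0,x_0)} \phi$; and for $\phi_1,\phi_2 \in \Sc_K(\Nc)$ one has
\[
\pi_\lambda(\phi_1 * \phi_2) = \pi_\lambda(\phi_1)\, \pi_\lambda(\phi_2) = \chi_K(\lambda)\, \pi_\lambda(\phi_1)\, \pi_\lambda(\phi_2)\, P_{\lambda,0},
\]
by applying the defining identity to $\phi_2$. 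Since left translation and convolution preserve $\Sc(\Nc)$ on any nilpotent Lie group, this yields (1) (and in fact shows that $\Sc(\Nc) * \Sc_K(\Nc) \subseteq \Sc_K(\Nc)$).

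For (2), I would first establish the following characterization: for $\phi \in \Sc(\Nc)$, one has $\phi \in \Sc_K(\Nc)$ if and only if $\phi$ is CR and $\Fc_F(\phi(\zeta,\cdot))$ is supported in $K$ for every $\zeta \in E$. This is essentially Proposition~\ref{prop:5} applied with $f = \phi$, combined with standard continuity arguments promoting the a.e.\ identities there to the pointwise identities demanded by the definition of $\Sc_K(\Nc)$. Granted this description, if $\phi_j \in \Sc_{K_j}(\Nc)$ for $j=1,2$, then $\phi_1 \phi_2 \in \Sc(\Nc)$ (closure under multiplication), and $\phi_1 \phi_2$ is CR because each $\overline{Z_v}$ is a first-order derivation, hence obeys the Leibniz rule. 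Finally,
\[
\Fc_F\bigl((\phi_1\phi_2)(\zeta,\cdot)\bigr) = (2\pi)^{-m}\, \Fc_F(\phi_1(\zeta,\cdot)) * \Fc_F(\phi_2(\zeta,\cdot))
\]
has support in $K_1 + K_2 \subseteq \overline{K_1 + K_2}$ for every $\zeta \in E$, giving $\phi_1 \phi_2 \in \Sc_{\overline{K_1 + K_2}}(\Nc)$.

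The main subtle point is the verification of the characterization used in (2): Proposition~\ref{prop:5} only provides the desired equivalence almost everywhere in $\lambda$ and $\zeta$, and upgrading this to the everywhere statement of the definition of $\Sc_K(\Nc)$ requires both Schwartz continuity of $\phi$ and strong-operator continuity of $\lambda \mapsto \pi_\lambda(\phi)$ for $\phi \in L^1(\Nc)$. Everything else is either a formal computation with representations or a standard fact about Schwartz spaces and supports of convolutions.
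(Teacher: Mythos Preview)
Your proposal is correct and follows essentially the same approach as the paper. For (1) the paper simply writes ``This is clear,'' which amounts to exactly the representation-theoretic verification you spell out; for (2) the paper invokes Proposition~\ref{prop:5} and the convolution formula $\Fc_F((\phi_1\phi_2)(\zeta,\cdot)) = (2\pi)^{-m}\,\Fc_F(\phi_1(\zeta,\cdot)) * \Fc_F(\phi_2(\zeta,\cdot))$, leaving the CR check via Leibniz and the a.e.-to-everywhere upgrade implicit, whereas you make both explicit.
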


\begin{proof}
	{(1)} This is clear.
	
	{(2)} By Proposition~\ref{prop:5}, it will suffice to observe that, if $\phi_1\in \Sc_{K_1}(\Nc)$ and $\phi_2\in  \Sc_{K_2}(\Nc)$, then 
	\[
	\Fc_F((\phi_1\phi_2)(\zeta,\,\cdot\,))=\frac{1}{(2\pi)^m} \Fc_F(\phi_1(\zeta,\,\cdot\,))*\Fc_F( \phi_2(\zeta,\,\cdot\,))
	\]
	is supported in $\overline{K_1+K_2}$ for every $\zeta\in E$.
\end{proof}

\begin{deff}
	Let $K$ be a closed convex subset of $\overline{\Lambda_+}$ with interior $U$. Then, we shall define $\Sc_{K,c}(\Nc)\coloneqq \Sc(\Nc)*\Fc_\Nc^{-1}(C^\infty_c(U))$.
\end{deff}

Notice that $\Sc_{K,c}(\Nc)$ is the union of the $\Sc_H(\Nc)$ as $H$ runs through the set of compact subsets of $U$.

\begin{teo}\label{prop:9}
	Let $K$ be a closed convex subset of $\overline{\Lambda_+}$. 
	Then, $\Sc_K(\Nc)$ is the closure of $\Sc_{K,c}(\Nc)$  in $\Sc(\Nc)$. 
\end{teo}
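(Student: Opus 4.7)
The plan is to establish both inclusions. The easier one, $\Sc_{K,c}(\Nc) \subseteq \Sc_K(\Nc)$, follows from Proposition~\ref{prop:40}: for $\eta \in C^\infty_c(U)$, where $U$ denotes the interior of $K$, part (2) yields $\pi_\lambda(\Fc_\Nc^{-1}(\eta)) = \eta(\lambda) P_{\lambda,0}$ for $\lambda \in F' \setminus W$, whence for any $\phi \in \Sc(\Nc)$ one has $\pi_\lambda(\phi * \Fc_\Nc^{-1}(\eta)) = \eta(\lambda) \pi_\lambda(\phi) P_{\lambda,0}$, which vanishes off $\supp(\eta) \subseteq U \subseteq K$ and carries the right-hand $P_{\lambda,0}$ factor demanded by $\Sc_K(\Nc)$. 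Combined with the closedness of $\Sc_K(\Nc)$ in $\Sc(\Nc)$---the defining equations are preserved under Schwartz limits by continuity of $\pi_\lambda$ on $L^1$---this bounds the closure.

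For the converse, I first discard trivial cases: $\Sc_K(\Nc) = \{0\}$ either when $\Lambda_+ = \emptyset$ (by Proposition~\ref{prop:3}) or when $K$ has empty interior in $F'$ (since $\Fc_F(\phi(\zeta, \cdot))$ is a Schwartz function supported in $K$, hence identically zero). So I may assume $U \neq \emptyset$. The strategy is then, for each $\phi \in \Sc_K(\Nc)$, to construct $\eta_j \in C^\infty_c(U)$ such that $\phi_j \coloneqq \phi * \Fc_\Nc^{-1}(\eta_j) \to \phi$ in $\Sc(\Nc)$. By Proposition~\ref{prop:40}(3) and the defining condition of $\Sc_K(\Nc)$, $\pi_\lambda(\phi - \phi_j) = (1 - \eta_j(\lambda)) \pi_\lambda(\phi)$ for $\lambda \in F' \setminus W$, so $\phi - \phi_j = T(\phi, 1 - \eta_j, 0)$ in the notation of Lemma~\ref{lem:6}; since left-invariant differential operators $X$ commute with right-convolution and preserve $\Sc_K(\Nc)$, this identity persists after applying $X$.

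The sequence $\eta_j$ will be taken of the form $\sigma_{1/j} \rho_j$, where $\sigma_\eps = \chi_{K_{2\eps}} * \chi_\eps$ is a mollification of the indicator of $K_{2\eps} \coloneqq \{\lambda : d(\lambda, F' \setminus K) > 2\eps\}$ (with standard bounds $\abs{\sigma_\eps^{(k)}} \meg C_k \eps^{-k}$), and $\rho_j$ is a smooth cutoff at $\abs{\lambda} \sim j$ (with $\abs{\rho_j^{(k)}} \meg C_k j^{-k}$). Then $\eta_j \in C^\infty_c(U)$, and a case analysis on whether $d(\lambda, \partial K) \meg 3/j$ or $> 3/j$, and whether $\abs{\lambda} \meg j/2$ or $> j/2$, shows that
\[
\sup_{\lambda \in K} \sup_{0 \meg k \meg N_2} \frac{\min(1, d(\lambda, \partial K))^{N_3} \abs{(1 - \eta_j)^{(k)}(\lambda)}}{(1 + \abs{\lambda})^{N_3}} \to 0
\]
as $j \to \infty$, provided $N_3 > N_2$. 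Applying Lemma~\ref{lem:6} to $X\phi \in \Sc_K(\Nc)$ with $N_1$ prescribed bounds $\abs{X(\phi - \phi_j)(\zeta, x)}$ by a vanishing multiple of $(1 + \abs{x + i\Phi(\zeta)})^{-N_1}$. Since $\Lambda_+ \neq \emptyset$ forces $\Phi$ non-degenerate (Proposition~\ref{prop:7}), the homogeneity estimate $\abs{\Phi(\zeta)} \Meg c \abs{\zeta}^2$ yields $(1 + \abs{x + i\Phi(\zeta)})^{N_1} \gtrsim (1 + \abs{x} + \abs{\zeta})^{N_1}$, converting the pointwise bound into vanishing of every Schwartz semi-norm of $\phi - \phi_j$.

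The main obstacle is the construction of $\eta_j$: they must be compactly supported in the \emph{open} set $U$, so the mollified indicator $\sigma_{1/j}$ necessarily becomes singular near $\partial K$ with derivatives of size $j^k$. The key point---already built into Lemma~\ref{lem:6}---is that the factor $\min(1, d(\lambda, \partial K))^{N_3}$ absorbs this blowup once $N_3$ exceeds the order $N_2$ produced by the lemma, while the $\rho_j$ cutoff and the $(1+\abs{\lambda})^{-N_3}$ factor together handle the behaviour at infinity (needed since $K$ may be unbounded). Balancing the three parameters $N_1, N_2, N_3$ is the technical heart of the argument.
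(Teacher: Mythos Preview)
Your overall strategy is essentially the paper's: build smooth cutoffs $\eta_j$ supported in the interior of $K$ and invoke Lemma~\ref{lem:6} with $\tau=1-\eta_j$ (or $\tau=\eta_j$), using the boundary weight $\min(1,d(\lambda,\partial K))^{N_3}$ to kill the $j^{N_2}$ blow-up of the mollifier derivatives. Your single product cutoff $\sigma_{1/j}\rho_j$ is a pleasant streamlining of the paper's two-step reduction (Step~I reduces to compact $K$ via dilates of a fixed $\Kc(\theta)$, Step~II treats the boundary). The estimate you sketch for the supremum in Lemma~\ref{lem:6} is correct.

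However, your derivative step contains a genuine error. You assert that left-invariant differential operators $X$ ``commute with right-convolution and preserve $\Sc_K(\Nc)$''. Both claims are false with the paper's (standard) conventions. For a left-invariant $X$ one has $X(f*g)=f*(Xg)$, \emph{not} $(Xf)*g$; and since left-invariant vector fields generate \emph{right} translations, they need not preserve the left-invariant space $\Sc_K(\Nc)$. Concretely, on the Heisenberg group $[\overline Z,Z]=-2iT\neq0$, so $Z\phi$ is not CR even when $\phi$ is, and Lemma~\ref{lem:6} does not apply to $X\phi$.

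The fix is immediate: use \emph{right}-invariant operators $\widetilde X$ instead. These generate left translations, hence preserve $\Sc_K(\Nc)$ by Proposition~\ref{prop:41}(1), and they do satisfy $\widetilde X(f*g)=(\widetilde X f)*g$. Then $\widetilde X(\phi-\phi_j)=T(\widetilde X\phi,1-\eta_j,0)$ with $\widetilde X\phi\in\Sc_K(\Nc)$, and your argument goes through verbatim. Since right-invariant vector fields together with polynomial weights also generate the Schwartz topology, this suffices. Alternatively, you could follow the paper's device: apply Lemma~\ref{lem:6} only to $\phi$ itself (obtaining uniform polynomial decay of $\phi_j$), and then recover derivative bounds from the identity $\phi_j=\phi_j*\Kc(\theta)$ for a fixed $\theta\in C^\infty_c(\R)$ equal to $1$ near the relevant spectrum, so that $X\phi_j=\phi_j*X\Kc(\theta)$.
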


With the notation of~\cite[Section 4.1]{CalziPeloso}, this proves that  $\widetilde \Sc_\Omega(\Nc)$ is the closure of $\Sc_{\Omega,L}(\Nc)$ in $\Sc(\Nc)$.

\begin{proof}
	We may assume that $K$ has a non-empty interior, so that, in particular, $\Lambda_+\neq \emptyset$ and $d_\lambda=0$ for every $\lambda\in F'\setminus W$.
	
	\textsc{Step I.} Assume that the assertion has been proved when $K$ is compact, and let us prove the statement in the general case. Take $f\in \Sc_K(\Nc)$. Observe that there exists $\phi \in \Sc(\Nc)$ such that $P_{\lambda,0}\pi_\lambda(\phi)=\eta(\lambda)P_{\lambda,0}$ for every $\lambda\in F'$, where $\eta\in C^\infty_c(F')$ and $\eta=1$ on  $ B_{F'}(0,1)$. Indeed, it suffices to take $\phi=\Kc(\theta)$, with the notation of Section~\ref{sec:1:3}, for some $\theta \in C^\infty_c(\R)$ such that $\theta=1$ on a suitable neighbourhood of $0$.
	Then $\eta$ is supported in $B_{F'}(0,R)$ for some $R>0$, so that 
	\[
	\pi_\lambda(f*\phi_j)=\eta(2^{-j}\,\cdot\,) \pi_\lambda(f)=\chi_{K\cap B_{F'}(0, 2^j R)} \pi_\lambda(f*\phi_j) P_{\lambda,0}
	\]
	for every $\lambda\in F'\setminus W$ and for every $j\in \N$, where $\phi_j\coloneqq 2^{j(n+m)} \phi(2^{j}\,\cdot\,)$. Then, the assumption shows that $f*\phi_j$ belongs to the closure of $\Sc_{K\cap B_{F'}(0,2^j R),c}(\Nc)$ in $\Sc(\Nc)$, hence to the closure of $\Sc_{K,c}(\Nc)$ in $\Sc(\Nc)$.
	Since clearly $f*\phi_j$ converges to $f$ in $\Sc(\Nc)$, the assertion follows.

	\textsc{Step II.} Let us prove the statement when $K$ is compact.	
	Define $K_\eps\coloneqq \Set{\lambda\in K\colon B_{F'}(\lambda,\eps)\subseteq K}$ for every $\eps>0$, so that $(K_\eps)_{\eps>0}$ is a decreasing family of compact convex subsets of $K$ whose union is the interior of $K$. 
	Fix $\psi\in C^\infty_c(B_{F'}(0,1))$, and define $\psi_\eps\coloneqq \eps^{-m} \psi(\eps^{-1}\,\cdot\,)$ and $\tau_\eps\coloneqq \chi_{K_{\eps/2}}*\psi_{\eps/4}$ for every $\eps>0$, so that 
	\[
	\chi_{K_\eps}\meg \tau_\eps\meg \chi_{K_{\eps/4}}
	\]
	and
	\[
	\norm{\partial^\alpha \tau_\eps}_\infty\meg (\eps/4)^{-\abs{\alpha}}\norm{\tau}_\infty\norm{\partial^\alpha\psi}_1
	\]
	for every $\alpha\in \N^m$ and for every $\eps>0$ (identifying $F'$ with $\R^m$ by means of an orthonormal basis).
	
	Fix $f\in \Sc_K(\Nc)$, and define
	\[
	f_\eps\coloneqq f*\Fc_\Nc^{-1}(\tau_\eps)
	\]
	for every $\eps>0$, so that $f_\eps\in \Sc_{K,c}(\Nc)$ for every $\eps>0$. In addition, Proposition~\ref{prop:2}  implies that $(f_\eps)$ converges to $f$ in $L^2(\Nc)$, for $\eps\to 0^+$, so that it will suffice to prove that the family $(f_\eps)_{\eps\in (0,1]}$ is bounded in $\Sc(\Nc)$.
	Observe that the preceding estimates and Lemma~\ref{lem:6} (choosing $N_3\Meg N_2$) imply that the family $(P f_\eps)_{\eps\in (0,1]}$ is bounded in $L^\infty(\Nc)$ for every polynomial $P$ on $\Nc$. 
	
	In order to complete the proof, take $\phi_j$, $j\in \N$, as in \textsc{Step I}, and observe that $f_\eps=f_\eps*\phi_j$ for every $\eps>0$, provided that $K\subseteq B_{F'}(0, 2^j)$, hence for $j$ sufficiently large. It is then clear that the $f_\eps$ are uniformly bounded in $\Sc(\Nc)$, as $\eps$ runs through $(0,1]$.	The proof is complete.
\end{proof}


\begin{thebibliography}{9}
	\bibitem{AstengoCowlingDiBlasioSundari}
	Astengo, F., Cowling, M., Di Blasio, B., Sundari, M., Hardy's Uncertainty Principle on Certain Lie Groups, \emph{J.\ London Math.\ Soc.} \textbf{62} (2000), p.~461--472.
	
	
	\bibitem{Boggess}
	Boggess, A., \emph{CR Manifolds and the Tangential Cauchy--Riemann Complex}, CRC Press, 1991.

	\bibitem{BourbakiGT1}
	Bourbaki, N., \emph{General Topology, I}, Chap.\ 1--4, Springer, 1995.
	
	\bibitem{BourbakiTVS}
	Bourbaki, N., \emph{Topological Vector Spaces}, Springer, 2003.
	
	
	\bibitem{Calzi}
	Calzi, M., \emph{Functional Calculus on Homogeneous Groups}, Ph.D.\ thesis, 2019.
	
	\bibitem{Calzi3}
	Calzi, M., Spectral Multipliers on $2$-Step Stratified Groups, I, \emph{J.\ Fourier Anal.\ Appl.} \textbf{26}, 35 (2020), https://doi.org/10.1007/s00041-020-09740-y.
	
	
	\bibitem{CalziPeloso}
	Calzi, M., Peloso, M.\ M., Holomorphic Function Spaces on Homogeneous Siegel Domains, \emph{Diss.\ Math.} \textbf{563} (2021), p.~1--168.
	
	\bibitem{CorwinGreenleaf}
	Corwin, L.\ J., Greenleaf, F.\ P., \emph{Representations of Nilpotent Lie Groups and Their Applications. Parti I: Basic Theory and Examples}, Cambridge University Press, 1990.
	
	\bibitem{Dixmier}
	Dixmier, J., \emph{$C^*$-Algebras}, North-Holland Publishing Company, 1977.
	
	\bibitem{Folland}
	Folland, G.\ B., \emph{Harmonic Analysis in Phase Space}, Princeton Univ.\ Press, 1989.
	
	\bibitem{HelfferNourrigat}
	Helffer, B., Nourrigat, J., Characterisation des operateurs hypoelliptiques homogènes invariant a gauche sur un groupe de Lie nilpotent gradué, \emph{Commun.\ Part.\ Diff.\ Eq.}\textbf{4}(8) (1979), p.~899--958.
	
	\bibitem{Hormander}
	H\"ormander, L., \emph{The Analysis of Linear Partial Differential Operators, I}, 2\textsuperscript{nd} ed., Springer-Verlag, 1990.
	
	\bibitem{Hulanicki}
	Hulanicki, A., A functional calculus for Rockland operators on nilpotent Lie groups, \emph{Stud.\
	Math.}, \textbf{78} (1984), p.~253--266.
	
	
	\bibitem{Martini2}
	 Martini, A., \emph{Algebras of Differential Operators on Lie Groups and Spectral Multipliers}, Ph.D.\	thesis, Scuola Normale Superiore, 2010, arXiv:1007.1119v1.
	
	\bibitem{Martini}
	Martini, A., Spectral theory for commutative algebras of differential 	operators on Lie groups, \emph{J.\ Funct.\ Anal.} \textbf{260} (2011), p.~2767--2814.
	
	
	\bibitem{PelosoRicci}
	Peloso, M.\ M., Ricci, F., Tangential Cauchy--Riemann Equations on Quadratic CR Manifolds, \emph{Rend.\ Mat.\ Acc.\ Lincei} \textbf{13} (2002), p.~125--134.
	
	\bibitem{PelosoRicci2}
	Peloso, M.\ M., Ricci, F., Analysis of the Kohn Laplacian on Quadratic CR Manifolds, \emph{J.\ Funct.\ Anal.} \textbf{203} (2003), p.~321--355.
	
	\bibitem{Rockafellar}
	Rockafellar, R.\ T., \emph{Convex Analysis}, Princeton University Press, 1970.
	
	\bibitem{VergneRossi}
	Rossi, H., Vergne, M., Group Representation on Hilbert SPaces Defined in Terms of $\bar \partial_b$-Cohomology on the \v Silov Boundary of a Siegel Domain, \emph{Pacific J.\ Math.} \textbf{6} (1976), p.~193--207.
	
	\bibitem{Schwartz}
	Schwartz, L., Les opérateurs de convolution. Le théorème des noyaux, \emph{Séminaire Schwartz}, \textbf{1} (1953--1954), exp.\ 11, p.~1--7.
	
	\bibitem{Treves}
	Treves, F., A Treasure Trove of Geometry and Analysis: The Hyperquadric, \emph{Not.\ AMS} \textbf{47} (2000), p.~1246--1256.
\end{thebibliography}
\end{document}